\documentclass[a4paper, 11pt]{article}

\usepackage{fullpage} 
\usepackage{parskip} 
\usepackage[numbers]{natbib}
\usepackage[utf8]{inputenc}
\usepackage[T1]{fontenc}
\usepackage{lmodern}
\usepackage{enumitem}
\usepackage[center]{caption}
\usepackage{graphicx}
\usepackage{placeins}
\usepackage{amssymb}
\usepackage{hyperref}
\usepackage{bm}
\usepackage{physics}
\usepackage{tabularx}
\usepackage{booktabs}
\usepackage{mathrsfs}
\usepackage[textfont={small,sl},labelfont={small,bf,sl}]{subcaption}
\usepackage{ragged2e}
\usepackage{tikz}
\usepackage{amsthm}
\usepackage{enumitem}
\newtheorem{theorem}{Theorem}[section]
\newtheorem{prop}{Proposition}[section]

\newtheorem{lemma}[theorem]{Lemma}
\newtheorem{remark}{Remark}[section]
\newtheorem{definition}{Definition}[section]
\usepackage{amsmath} 
\usepackage{amsfonts} 
\usepackage{float}

\hypersetup{colorlinks=true, 
breaklinks= true, 
urlcolor= magenta, 
linkcolor= blue, 
citecolor= blue, 
pdftitle= {}, 
pdfauthor= {Imane ESSADEQ}, 
pdfsubject= {},
pdfborder= {0 0 0}}

\newcommand{\poubelle}[1]{}
\numberwithin{equation}{section}
\usepackage{authblk}

\title{Stability and Convergence of Modal Approximations in Coupled Thermoelastic Systems: Theory and Simulation}

\author[1]{I. Essadeq}
\author[1]{S. Nafiri}
\author[2]{S. Benjelloun}
\author[1]{A. E. Fettouh}

\affil[1]{\footnotesize LaGeS Laboratory, Modeling and Numerics Team (MoNum), Hassania School of Public Works, Casablanca, Morocco. Emails: essadeq.imane@gmail.com, nafiri@ehtp.ac.ma and anoirfetto@hotmail.com}
\affil[2]{\footnotesize DeVinci Higher Education, De Vinci Research Center, Paris, France, Email: saad.benjelloun@devinci.fr.}
\date{\textit{Dedicated to the memory of Professor Hammadi Bouslous}}

\begin{document}

\maketitle


\begin{abstract}

In this work, we review and analyze both the theoretical and numerical aspects of strongly and weakly coupled thermoelastic systems. By employing spectral analysis techniques and establishing uniform resolvent estimates, we derive uniform polynomial decay rates for the associated semigroups under a suitable class of boundary conditions. Particular attention is paid to the role of modal approximations in energy analysis. The theoretical results are complemented by numerical experiments that illustrate how the regularity of initial data, smooth versus nonsmooth, affects the observed decay rates, providing deeper insight into the interplay between spectral structure and energy dissipation.
\end{abstract}

{\bf Key words:}
{Semigroup theory; Spectral analysis; Modal approximation; Thermoelasticity; Exponential and polynomial stability; Resolvent estimates.}

{\bf MSC Classification (2020):}
{35B40, 35L55, 35M33, 35K05, 65M60, 65M12, 35Q74, 93D20}

\tableofcontents

\section{Introduction}
\subsection{Background and Motivation}
Thermoelasticity lies at the intersection of mechanics and heat transfer, with broad applications in civil and structural engineering, materials science, and fluid mechanics. It models the interaction between mechanical deformation and temperature variations (interactions) that play a critical role in damping, wave propagation, and material stability; see \citet{gurtin2010}.

The first known work on thermoelasticity was presented by Duhamel before the French Academy of Sciences in 1835, and later published in the Journal de l'Ecole Polytechnique in 1837 \citet{duhamel1837second}. In this seminal paper, Duhamel formulated boundary value problems and derived the fundamental equations describing the coupling between the temperature field and the mechanical deformation of a solid body.

Real-world applications vividly illustrate the significance of thermoelastic coupling. In railway engineering, for example, thermal expansion and contraction of rails must be carefully managed to maintain track alignment and ensure safety under extreme weather conditions \citep{mirkovic2023parametric,yang2016thermal,zhai2019train}. Similar challenges arise in bridge expansion joints \citep{li2023thermal} and telecommunication cables \citep{kotsovinos2020analysis}, where mechanical and thermal interactions demand accurate modeling and control. A notable example is the Hubble Space Telescope, where thermally induced vibrations impaired performance, underscoring the importance of modeling coupled thermal and mechanical effects in flexible structures \citep[p.~1163]{gibson1992approximation}. For comprehensive treatments of energy transfer and damping in such systems, we refer to \citet{carlson1973linear}, \citet{chadwick1960thermoelasticity}, and \citet{gibson1992approximation}.

The stability analysis of thermoelastic systems is crucial because of their widespread engineering applications. In civil structures such as bridges, excessive or uncontrolled damping can compromise integrity, making weakly coupled thermoelastic models \eqref{wc_thrm} more appropriate to capture slow energy dissipation and allow effective control. In contrast, in railway systems, fast and controlled thermal contraction is often beneficial to maintaining operational safety, favoring the use of strongly coupled thermoelastic models \eqref{sc_thrm}. These contrasting requirements underscore the need for a rigorous understanding of thermoelastic dynamics to design systems that are both resilient and responsive to coupled phenomena.

To manage such challenges, simplified but effective mathematical models are essential. In many cases, the thermoelastic system is reduced to a one-dimensional model, justified by the dominance of wave and heat propagation along a principal axis. These reduced models retain the essential physical behavior while allowing for tractable analysis and efficient numerical implementation.

Despite the extensive literature on thermoelastic systems, a fundamental difficulty persists when passing from continuous models to numerical approximations, particularly in relation to long-time stability and energy decay. While the continuous dynamics of thermoelastic systems are now well understood under various coupling mechanisms, much less is known about whether discrete approximations faithfully reproduce these decay properties uniformly with respect to the discretization parameters. This issue is especially pronounced for weakly coupled thermoelastic systems, where dissipation mechanisms are indirect and decay rates are typically polynomial rather than exponential. Understanding how such subtle asymptotic behaviors are affected by approximation schemes is therefore both mathematically challenging and practically relevant, motivating the present study.

\subsection{Mathematical Formulation and State of the Art}
One classical formulation, developed by \citet{day1985} for homogeneous rods, consists of a coupled system involving the wave and heat equations:
\begin{equation*}
\begin{aligned}
\rho \partial^2_t u &= (\lambda + 2 \mu) \partial^2_x u - \alpha(3\lambda + 2 \mu) \partial_x \theta, \\
h_0 \partial_t\theta &= k \partial^2_x\theta - \theta_0 (3\lambda + 2 \mu) \partial_x\partial_t u.
\end{aligned}
\label{init_pb}
\end{equation*}
By making a change of variables.
\begin{align*} 
\theta &\rightarrow \frac{\theta - \theta_{0}}{\theta_{0}}, \quad \quad u \rightarrow \frac{u}{L} \sqrt{\frac{\lambda+2 \mu}{\theta_{0} h_{0}}}, \quad \quad t \rightarrow \frac{k t}{h_{0} L^{2}}.
\end{align*}
The equations are transformed to
\begin{equation*}
\begin{aligned}
\partial^2_t u &= c^{2} \partial^2_x u - c^{2} \gamma \partial_x \theta, \\
\partial_t\theta &= \partial^2_x\theta - \gamma \partial_x\partial_t u.
\end{aligned}
\end{equation*}
The constants \( \gamma \) and \( c \) are given by
\begin{align*}
\gamma &= \left( \frac{\theta_{0} \alpha^{2}(3 \lambda + 2 \mu)^{2}}{h_{0}(\lambda + 2 \mu)} \right)^{1 / 2}, \quad \quad c^{2} = \frac{h_{0}^{2}(\lambda + 2 \mu) L^{2}}{k^{2} \rho}.
\end{align*}

\begin{table}[h!]
\centering
\caption{Notation for coefficients and variables in the thermoelastic system}
\begin{tabularx}{\textwidth}{@{} l X l @{}}
\toprule
\textbf{Symbol} & \textbf{Description} & \textbf{Units} \\
\midrule
$u$        & Displacement field (along the domain)               & meters (m) \\
$\theta$   & Absolute temperature field                          & kelvin (K) \\
$\partial_x u$ & Strain & Dimensionless \\
$\partial_x\partial_t u$ & Rate of change of strain & $1/\text{s}$ \\
$\partial_t u$ & Velocity & m/s \\
$\partial^2_t u$ & Acceleration & m/s$^2$ \\
$\partial_x \theta$ & Temperature gradient (heat flux) & K/m \\
$\partial_t \theta$ & Rate of change of temperature & K/s \\
$\partial^2_x \theta$ & Heat diffusion term (in Fourier’s law) & K/m$^2$ \\
$\theta_0$      & Reference (initial) temperature                     & kelvin (K) \\
$\rho$          & Mass density of the material                        & kg/m$^3$ \\
$h_0$           & Specific heat at constant strain                    & J/(kg$\cdot$K) \\
$k$             & Thermal conductivity                                & W/(m$\cdot$K) \\
$\lambda$       & First Lamé constant                                 & pascal (Pa) \\
$\mu$           & Second Lamé constant (shear modulus)                & pascal (Pa) \\
$\alpha$        & Linear thermal expansion coefficient                & 1/K \\
$L$             & Length of the domain                                & meters (m) \\
$t$             & Time variable                                       & seconds (s)
 \\
$x$             & Spatial variable                                    & meters (m)
 \\
\midrule
\multicolumn{3}{l}{\textbf{Derived quantities}} \\
\midrule
$\displaystyle \gamma = \left( \frac{\theta_0 \alpha^2 (3\lambda + 2\mu)^2}{h_0(\lambda + 2\mu)} \right)^{1/2}$ & Coupling strength in the scaled system 

(very small in comparison to 1) & dimensionless \\
$\displaystyle c^2 = \frac{h_0^2(\lambda + 2\mu)L^2}{k^2\rho}$ & Normalized wave speed squared 

(often scaled to 1) & dimensionless \\
\bottomrule
\end{tabularx}
\label{tab:thermoelastic-notation}
\end{table}
The normalized system obtained emphasizes the coupling parameter $\gamma$ and the wave speed $c$, forming what is commonly known as the strongly coupled thermoelastic system. In the following analysis, we set $c = 1$ without loss of generality, as this assumption does not affect the validity of the proofs or the conclusions drawn from our results.

The normalized form of the strongly coupled thermoelastic system, subject to Dirichlet boundary conditions and prescribed initial data, is given by:
\begin{equation}
\label{sc_thrm}
 \left\{\begin{array}{llll}
\partial^2_{t} u - \partial^2_x u + \gamma \partial_x\theta = 0,\quad &\text{in}\;\Omega\times(0, \infty),\\
\partial_t\theta - \partial^2_x \theta + \gamma \partial_x\partial_t u = 0,\quad &\text{in}\;\Omega\times(0, \infty),\\
u =0=\theta, \quad &\text{in}\;\partial\Omega\times (0,\infty),\\
u(\cdot,0)=u_{0},\; u_{t}(\cdot,0)=u_{1},\;\theta(\cdot,0)=\theta_0, \quad &\text{in} \;\Omega,
\end{array}
\right.
\end{equation}
where $\Omega$ is a bounded smooth domain of $\mathbb{R}$ and $\partial\Omega$ its boundary. Here, the coupling occurs through spatial and mixed derivatives, reflecting a strong interaction between temperature and displacement. A weakly coupled variant, introduced by \citet{khodja1997dynamical}, see also \citet{liu2005characterization}, replaces the spatially differentiated terms by direct coupling:
\begin{equation}
\label{wc_thrm}
 \left\{\begin{array}{llll}
\partial^2_{t} u -  \partial^2_{x} u +  \gamma \theta = 0, \quad & \text{in} \;\Omega\times(0, \infty),\\
\partial_t\theta - \partial^2_{x}\theta - \gamma \partial_t u = 0,\quad &\text{in} \;\Omega\times(0, \infty),\\
u=0=\theta,\quad &\text{in}\;\partial\Omega\times (0,\infty),\\
u(\cdot,0)=u_{0},\; u_{t}(\cdot,0)=u_{1},\;\theta(\cdot,0)=\theta_0, \quad & \text{in} \;\Omega.
\end{array}
\right.
\end{equation}
The energy functional associated with the systems \eqref{sc_thrm} and \eqref{wc_thrm} is defined by:
\begin{equation}
\label{thermo_energy}
E(t):=\frac{1}{2}\int_\Omega\Big(|\partial_t u(x,t)|^2+|\partial_x u(x,t)|^2+|\theta(x,t)|^2\Big)dx.
\end{equation}
Moreover, the dissipation of the energy \eqref{thermo_energy} is given by
\begin{equation}
\label{dissip_energy}
\frac{d}{dt}E(t)=-\int_\Omega |\partial_x\theta(x,t)|^2dx.
\end{equation}
From \eqref{dissip_energy}, we see that the total energy \eqref{thermo_energy} decreases due to thermal dissipation; see \citet{dafermos1970asymptotic}. Although both systems \eqref{sc_thrm} and \eqref{wc_thrm} are dissipative, they exhibit fundamentally different asymptotic behaviors. This distinction arises from the nature of their coupling mechanisms: the system \eqref{sc_thrm} is strongly coupled through the terms $\partial_x \theta$ and $\partial_x \partial_t u$, while the system \eqref{wc_thrm} is only weakly coupled via $\theta$ and $\partial_t u$. This structural difference has a profound impact on the analytical properties and long-term dynamics of each system, particularly with respect to energy decay rates and stability characteristics.

\citet{dafermos1968existence} was among the first to investigate the asymptotic behavior of solutions to system \eqref{sc_thrm}, showing that under certain initial conditions, the associated energy decays to zero as time tends to infinity. However, the precise rate of decay remained an open question for more than two decades. \citet{Hansen1992} succeeded in establishing an exponential decay estimate for special boundary conditions, by using Fourier series expansion and decoupling techniques. Substantial progress in the treatment of this (exponential) behavior was achieved through the works of \citet{gibson1992approximation}, \citet{kim1992energy}, \citet{liu1993exponential}, \citet{burns1993energy}, \citet{rivera1992energ}, \citet{fabiano2001renorming}. For a
chronological treatment of the topic, see the book by \citet[Chap 2, p:23-25]{LiuZheng1999}.

In parallel, several research efforts have been interested in the controllability properties of the strongly coupled thermoelastic system \eqref{sc_thrm}. In particular, the works by \citet{zuazua1995controllability}, \citet{lebeau1998null}, and \citet{lebeau1999decay} examined how the differing time scales of the heat and wave components impact the ability to control the system. These studies demonstrated that controllability can be achieved through various mechanisms, including boundary control, internal control forces, and control acting on specific subdomains. Such contributions laid the groundwork for understanding how to steer thermoelastic systems toward desired states despite the inherent coupling between hyperbolic and parabolic dynamics.

In contrast to the strongly coupled thermoelastic system \eqref{sc_thrm}, analyzing the asymptotic behavior of the weakly coupled system \eqref{wc_thrm} presents greater challenges. Although several theoretical works, such as those by \citet{khodja1997dynamical}, \citet{lebeau1999decay}, \citet{ammari2001stabilization}, \citet{alabau2002indirect}, \citet{liu2005characterization}, \citet{batkai2006polynomial}, \citet{burq2006energy}, and \citet{borichev2010optimal}, have established polynomial energy decay under specific conditions, the convergence rate is typically slower and more sensitive to parameters such as coupling strength, regularity of the initial data, and geometric properties of the domain.

To approximate or to control the solutions of infinite-dimensional systems, researchers and engineers often rely on numerical approximation schemes (see, e.g., \citet{richardson1922weather}, \citet{courant1928}, \citet{lions-magenes-v1}, \citet{Glowinski1984}, \citet{courant1994variational}). Although it was once widely assumed that numerical approximations would naturally inherit the decay properties of their continuous counterparts, subsequent studies have shown that this is not generally the case. As demonstrated by \citet{trefethen2022numerical}, \citet{thomee2007galerkin}, and \citet{zuazua2005propagation}, discretization, in time, space, or both, can significantly alter the spectral characteristics of a system, potentially affecting uniform stability. In particular, even when the continuous model exhibits exponential or polynomial energy decay, the discrete version may exhibit spurious growth or stagnation unless the numerical scheme is carefully designed to preserve the qualitative behavior of the original system.

Numerous studies have addressed the question of uniform exponential decay in numerical approximations of the strongly coupled thermoelastic system \eqref{sc_thrm}; see \citet{burns1991approximations,liu1994uniform,gibson1992approximation,fabiano2000,fabiano2001,gervasio2004numerical}, for example, \citet{banks1991exponentially,infante1999boundary,zuazua2005propagation,ramdani2007uniformly} for related results on control and uniform stabilization of wave and beam type equations. In contrast, the issue of uniform polynomial decay in approximation schemes for the weakly coupled system \eqref{wc_thrm} has received comparatively less attention. Notable exceptions include the work of \citet{abdallah2013uniformly,MN2016} and \citet{nafiri2021uniform}, which represent important steps toward understanding the uniform polynomial decay of discrete weakly coupled models.

Taken together, these works reveal a clear dichotomy between strongly and weakly coupled thermoelastic systems. For strongly coupled models, uniform exponential decay is now well understood both at the continuous and discrete levels. In contrast, for weakly coupled systems, although polynomial decay of the continuous energy has been established, the corresponding behavior of numerical or modal approximations remains far less explored. In particular, the question of whether polynomial decay rates can be preserved uniformly with respect to the discretization parameter, and how spectral properties and boundary conditions influence this behavior, has not been fully addressed. This gap becomes even more pronounced when considering the sensitivity of decay rates to the regularity of the initial data.

\subsection{Problem Statement and Contributions}
The aim of this paper is to investigate the uniform decay properties of numerical approximations for thermoelastic systems under both strong and weak coupling mechanisms. While uniform exponential decay for approximations of strongly coupled thermoelastic systems is by now relatively well understood, the preservation of polynomial decay in weakly coupled systems remains largely open. In particular, it is unclear whether modal approximation schemes can capture the correct decay rates uniformly with respect to the discretization parameter, especially in the presence of different boundary conditions and varying regularity of the initial data.

This work contributes to filling that gap through a two-fold effort:
\begin{enumerate}
\item Theoretical Analysis:
\begin{itemize}
\item We revisit and extend existing results on the exponential stability of modal approximations for the strongly coupled thermoelastic system under appropriate boundary conditions. (Proposition \ref{stab_BC}). 
\item We establish new results on polynomial stability for the weakly coupled system, under appropriate boundary conditions and in a suitable modal approximation framework (Theorems \ref{spec_sw}, \ref{theorem_2}, \ref{theorem_3} and Propositions \ref{prop2} and \ref{prop3}).
\item The analysis is carried out using semigroup theory, spectral analysis, and resolvent estimates, ensuring that the decay properties of the continuous model are preserved in the numerical approximations.
\end{itemize}
\item Numerical Validation:
\begin{itemize}
\item We design and implement numerical experiments that illustrate the asymptotic behavior of both systems, with a focus on the influence of initial data regularity and boundary conditions.
\item The simulations demonstrate that, unlike the strongly coupled case, the weakly coupled system exhibits energy decay rates that are highly sensitive to the smoothness or discontinuity of the initial data.
\end{itemize}
\end{enumerate}
The remainder of the paper is structured as follows: Section \ref{section2} introduces the mathematical framework, notation, and approximation strategy. Section \ref{section3} is devoted to the spectral and resolvent-based analysis of the strongly coupled system and establishes its uniform exponential stability under various boundary conditions. Section \ref{section4} presents the main theoretical results on uniform polynomial decay for the weakly coupled system, together with their proofs. Section \ref{section5} provides a series of numerical experiments that validate the theoretical findings and highlight subtle behaviors related to regularity. Finally, Section \ref{section6} concludes the paper and outlines several directions for future research.

\section{Preliminaries and Framework Setting}
\label{section2}

In this section, we establish the functional framework used throughout the paper. We begin by defining the relevant function spaces and introducing the semigroup formulation for the continuous problem. We then present the modal approximation strategy used for discretization and define the associated discrete energy functional. This setting provides the foundation for both the analytical and numerical results that follow.

\subsection{Notations and Functional Spaces}

Let $\Omega \subset \mathbb{R}^N$ be a bounded smooth domain with smooth boundary $\partial\Omega$, and $N\in\mathbb{N}^+$. We use the standard Lebesgue and Sobolev spaces:
\begin{itemize}
\item $L^p(\Omega)$: space of p-integrable real-valued functions with the norm
\[
\|f\|_{L^{p}}=\left(\int_{\Omega}|f(x)|^{p} d x\right)^{\frac{1}{p}},\quad 1\leq p<\infty.
\]
\item $H^{1}(\Omega)$: space of functions in $L^2(\Omega)$ whose first weak derivatives also belong to $L^2(\Omega)$.
\item $H_0^1(\Omega)$: subspace of $H^1(\Omega)$ with functions vanishing on $\partial\Omega$.
\item $H^2(\Omega)$: space of functions whose first and second weak derivatives are square-integrable.
\item $H^m(\Omega)=\Big\{f\in L^2(\Omega):\ D^\alpha f\in L^2(\Omega)\ \text{for all }\alpha\ \text{with }|\alpha|\le m\Big\},$
where derivatives $D^\alpha f$ are understood in the \emph{weak} (distributional) sense.
\end{itemize}
Let $\mathcal{H}$ denote the Hilbert space $L^2(\Omega) \times L^{2}(\Omega) \times L^{2}(\Omega)$, equipped with the norm  
\[
\| z \|_{\mathcal{H}} = \left( \| \nabla u \|^{2}_{L^{2}} + \| v \|^{2}_{L^{2}} + \| \theta \|^{2}_{L^{2}} \right)^{\frac{1}{2}},
\]

for $z=(\nabla u,v, \theta) \in \mathcal{H}$.
\subsection{Semigroup Formulation}

We reformulate the thermoelastic systems \eqref{sc_thrm} and \eqref{wc_thrm} as abstract Cauchy problem of the form:
\begin{equation}
\left\{\begin{aligned}
\frac{dz}{dt} &= \mathcal{G} z, \\
\quad \mathcal{B} z &= g ,\\
\quad z(0) &= z_0,
\end{aligned}\right.
\label{S_g}
\end{equation}
where $\mathcal{G}$ is a closed operator generating a strongly continuous semigroup $T(\cdot)$ on $\mathcal{H}$, and $\mathcal{B}$, $g$, and $z_0$ denote the boundary operator, boundary data, and initial state, respectively.

Noting $z =\begin{pmatrix} \partial_x u, \partial_t u, \theta \end{pmatrix}^T$, for the strongly coupled thermoelastic system \eqref{sc_thrm}, the generator $\mathcal{S}$ is given by:
\begin{equation*}
\begin{aligned}
\mathcal{S} = 
\begin{pmatrix}
0 & \partial_x & 0 \\
\partial_x & 0 & - \gamma \partial_x \\
0 & -\gamma \partial_x & \partial_x^{2}
\end{pmatrix}.
\end{aligned}
\end{equation*}
For the weakly coupled thermoelastic system \eqref{wc_thrm}, the generator $\mathcal{W}$ is:
\begin{equation*}
\mathcal{W} = \begin{pmatrix}
0 & \partial_x & 0 \\
\partial_x & 0 & - \gamma I \\
0 & \gamma I & \partial_x^{2}
\end{pmatrix}.
\end{equation*}

We consider four cases of boundary conditions \textbf{(BCs)}, including:
\begin{itemize}
\item \textbf{Case 1 (Dirichlet--Dirichlet):}$u=\theta=0$ on $\partial\Omega$.
\item \textbf{Case 2 (Dirichlet--Neumann):}  $u=0$, $\partial_x\theta=0$ on $\partial\Omega$.
\item \textbf{Case 3 (Dirichlet--Neumann):}  $\partial_x u=0$, $\theta=0$ on $\partial\Omega$.
\item \textbf{Case 4 (Neumann--Neumann):} $\partial_x u=\partial_x\theta=0$ on $\partial\Omega$. 
\end{itemize}
Well-posedness for these problems follows from standard semigroup theory and results by \citet{Hansen1992}, \citet{rivera1992energ}, \citet{khodja1997dynamical} and \citet{MN2016}.

\subsection{Energy Functional and Stability Definitions}
The total energy associated with either system is defined by:
\[
E(t):=\frac{1}{2}\int_\Omega\Big(|\partial_t u(x,t)|^2+|\partial_x u(x,t)|^2+|\theta(x,t)|^2\Big)dx.
\]
Its dissipation is given by:
\[
\frac{d}{dt}E(t)=-\int_\Omega |\partial_x\theta(x,t)|^2dx\leq 0.
\]
We now recall the definitions of uniform stability for a sequence of $C_{0}$-semigroups $T_n(\cdot)$ with generators $\mathcal{G}_n$ on Hilbert spaces $\mathcal{H}_n$.
\begin{definition}
\label{stability_definitions}
Let $\mathcal{G}_{n}$, $n \in \mathbb{N}$, be a sequence of generators of $C_0$-semigroups of contractions $T_{n}(\cdot)$ with $n \in \mathbb{N}$, on $\mathcal{L}(\mathcal{H}_{n})$.
\begin{itemize}
    \item \textbf{Uniform Strong Stability:} $T_{n}(\cdot)$, $n \in \mathbb{N}$, is said uniformly strongly stable if 
$$ \lim_{t \to \infty} \left\|T_{n}(t)\right\|_{\mathcal{L}(\mathcal{H}_{n})}  = 0 \text { for all } n \in \mathbb{N}.
$$
    \item \textbf{Uniform Exponential Stability:} $T_{n}(\cdot)$, $n \in \mathbb{N}$, is said uniformly exponentially stable with decay rate $\alpha>0$ if there exists a constant $M$ independent of $n$ such that  
$$
\left\|T_{n}(t)\right\|_{\mathcal{L}(\mathcal{H}_{n})} \leqslant M e^{-\alpha t} \quad \text { for all } t>0 \quad \text{and} \quad n \in \mathbb{N} \text {. }
$$
    \item \textbf{Uniform Polynomial Stability:} $T_{n}(\cdot)$, $n \in \mathbb{N}$, is said uniformly polynomialy stable of order $\alpha>0$ (decay rate) if there exists a constant $M$ independent of $n$ such that
$$
\left\|T_{n}(t) \mathcal{G}_{n}^{-1}\right\|_{\mathcal{L}(\mathcal{H}_{n})} \leqslant \frac{M}{t^{\frac{1}{\alpha}}} \quad \text { for all } t>0 \quad \text{and} \quad n \in \mathbb{N}.
$$
\end{itemize}
\end{definition}

\begin{remark}
Proving the existence of constants $M$ and $\alpha$ is generally challenging, as illustrated by Huang’s counterexample, which demonstrates that even when each matrix $\mathcal{G}_n$ (with semigroup $T_n(t) = e^{\mathcal{G}_n t}$) has its spectrum $\sigma(\mathcal{G}_n)$ uniformly bounded away from zero, the desired exponential estimates may still fail, see \cite{huang1985characteristic}.

To our knowledge, Banks, Ito, and Wang in \cite{banks1991exponentially} were among the first to tackle this lack of uniform exponential stability in the context of weakly damped wave equations. Their study \cite{banks1991exponentially}, supported by numerical experiments, revealed that standard finite-difference and finite-element discretizations may not preserve the expected exponential decay of the discretized energy as the mesh is refined, see also \citet{ramdani2007uniformly}. To address this, they recommended the use of mixed finite element methods.

Subsequent strategies have included regularizing via Tychonoff methods (as in Glowinski et al.\cite{glowinski1990numerical}), filtering out high-frequency components (as explored by Infante and Zuazua \cite{infante1999boundary}), and additional approaches developed by Maniar and Nafiri \cite{MN2016,nafiri2021uniform}.

For a comprehensive overview and further references, Zuazua’s review \cite{zuazua2005propagation} on wave propagation, observation, and control via finite-difference schemes offers an excellent resource.
\end{remark}

\subsection{Approximation Strategy}
In order to compute the solution $z(\cdot)$ of the Cauchy problem \eqref{S_g}, one often has to use various numerical approximation schemes. The most common approach for the approximation of \eqref{S_g} is to consider a sequence of finite-dimensional systems
\begin{equation} \label{Cauchy_pb}
\left\{\begin{aligned}
    \frac{d{z}_{n}(t)}{dt} &= \mathcal{G}_{n} z_{n}(t), \quad &t &\geqslant 0, 
    \\
    \mathcal{B}_n {z}_{n}(t) &= g(t),\quad &t &\geqslant 0, 
    \\
    z_{n}(0) &= z_{0 n}, \quad & 
\end{aligned}\right.
\end{equation}
for $n \in \mathbb{N}$, and where $\mathcal{G}_{n}$ generates a sequence of $C_{0}$-semigroups of operators $T_{n}(t):=e^{\mathcal{G}_{n} t}$ on the Hilbert spaces $\mathcal{H}_{n}$ for all $n \in \mathbb{N}$ and $t>0$. $\mathcal{B}_n$, $g$, and $z_{0n}$ denote the discrete boundary operator, boundary data, and discrete initial state, respectively.

Let $\mathcal{H}_{n}\subset\mathcal{H}$ be a finite-dimensional subspace spanned by the basis functions
\[
E_{j} = \begin{pmatrix}
\phi_{j} \\
0 \\
0
\end{pmatrix}, \quad E_{n+j} = \begin{pmatrix}
0 \\
\psi_{j} \\
0
\end{pmatrix}, \quad E_{2 n+j} = \begin{pmatrix}
0 \\
0 \\
\xi_{j}
\end{pmatrix}, \quad j=1, \ldots, n.
\]

Since $H^{2}(\Omega) \cap H_{0}^{1}(\Omega)$ is dense in $L^{2}(\Omega)$, we can choose $\phi_{i} \in H^{2}(\Omega) \cap H_{0}^{1}(\Omega), \psi_{i} \in H_{0}^{1}(\Omega)$ and $\xi_{i} \in H_{0}^{1}(\Omega)$, see \citet{liu1994uniform} and \citet{LiuZheng1999}. The inner product on \( \mathcal{H}_{n} \) is the one induced by the \( \mathcal{H} \)-product. We consider the approximation to the solution of \eqref{S_g} of the form
\begin{equation}\label{z_n}
z_{n} = \sum_{j=1}^{3 n} \tilde{z}_{j}(t) E_{j}(x),
\end{equation}
which is required to satisfy the following variational system
\[
\left( \dot{z}_{n}, E_{j} \right)_{\mathcal{H}} = \left( \mathcal{G}_n z_{n}, E_{j} \right)_{\mathcal{H}}, \quad j=1, \ldots, 3 n.
\]
Then we have
\begin{equation}
\begin{aligned}
M_{n} \dot{\tilde{z}}_{n} &= 
\begin{pmatrix}
M_{n}^{(1)} & & \\
& M_{n}^{(2)} & \\
& & M_{n}^{(3)}
\end{pmatrix} \begin{pmatrix}
\dot{\tilde{z}}_{n}^{(1)} \\
\dot{\tilde{z}}_{n}^{(2)} \\
\dot{\tilde{z}}_{n}^{(3)}
\end{pmatrix}, \\
&= \begin{pmatrix}
0 & \tilde{D}_{n}^{T} & 0 \\
-\tilde{D}_{n} & 0 & -\gamma \tilde{F}_{n} \\
0 & \gamma \tilde{F}_{n}^{T} & -G_{n}
\end{pmatrix} \begin{pmatrix}
\tilde{z}_{n}^{(1)} \\
\tilde{z}_{n}^{(2)} \\
\tilde{z}_{n}^{(3)}
\end{pmatrix} = \tilde{\mathcal{G}}_{n} \tilde{z}_{n},
\end{aligned}
\label{M_n}
\end{equation}
with
\[
\tilde{z}_{n}^{(i)} = \begin{pmatrix}
\tilde{z}_{(i-1) n+1}, \ldots, \tilde{z}_{i n}
\end{pmatrix}^{T}, \quad i=1,2,3 .
\]
By construction, the matrix \( M_{n}^{(i)} \) is symmetric and positive definite. Therefore, there exists a lower triangular matrix \( L_{n}^{(i)} \) such that \( M_{n}^{(i)} = \left( L_{n}^{(i)} \right)^{T} \left( L_{n}^{(i)} \right) \). Let \( L_{n} = \operatorname{diag} \left( L_{n}^{(1)}, L_{n}^{(2)}, L_{n}^{(3)} \right) \) and denote \( L_{n} \tilde{z}_{n} \) by \( \bar{y}_{n} \). 

To obtain an approximate solution \( z_{n} \), we are led to solve the system of ordinary differential equations
\[
\dot{\bar{y}}_{n} = \mathcal{G}_{n} \bar{y}_{n},
\]
with
\begin{equation*}\label{A_n}
\mathcal{G}_{n} = \begin{pmatrix}
0 & \left( L_{1}^{T} \right)^{-1} \tilde{D}_{n}^{T} L_{2}^{-1} & 0 \\
-\left( L_{2}^{T} \right)^{-1} \tilde{D}_{n} L_{1}^{-1} & 0 & -\gamma \left( L_{2}^{T} \right)^{-1} \tilde{F}_{n} L_{3}^{-1} \\
0 & \gamma \left( L_{3}^{T} \right)^{-1} \tilde{F}_{n}^{T} L_{2}^{-1} & -\left( L_{3}^{T} \right)^{-1} G_{n} L_{3}^{-1}
\end{pmatrix}.
\end{equation*}
It is easy to see that
\[
Re \left( \mathcal{G}_{n} \bar{y}_{n}, \bar{y}_{n} \right)_{\mathbb{R}^{3 n}} = -\left( G_{n} L_{3}^{-1} \tilde{z}_{n}^{(3)}, L_{3}^{-1} \tilde{z}_{n}^{(3)} \right)_{\mathbb{R}^{n}} \leq 0,
\]
provided that \( G_{n} \) is semi-positive definite. In that case, \( \mathcal{G}_{n} \) generates a contraction \( C_{0} \)-semigroup \( T_{n}(\cdot) \) on \( \mathcal{H}_n \) (\citet{burns1991approximations}).

Notice that with the previous choice of basis in $\mathcal{H}_{n}$, \eqref{z_n} defines an isomorphism between $\mathcal{H}_{n}$ and $\mathbb{R}^{3 n}$ which is equipped with the usual inner product. We can thus consider the evolution equation in $\mathcal{H}_{n}$, with $\left(\mathcal{G}_{n} z_{n}, z_{n}\right)_{\mathcal{H}_{n}}=\left(\mathcal{G}_{n} \bar{y}_{n}, \bar{y}_{n}\right)_{\mathbb{R}^{3 n}}$ for $z_{n} \in \mathcal{H}_{n}$.

\begin{remark}
In what follows, we refer to the strongly coupled thermoelastic system by ($S_s$) and to the weakly coupled thermoelastic system by ($S_w$).
\end{remark}

\section{Uniform Exponential Stability of the Strongly Coupled Thermoelastic System $(S_s)$}
\label{section3}

In this section, we review key results from the literature on the spectral analysis and exponential stability of the strongly coupled thermoelastic system $(S_s)$, in both its continuous formulation and its discretized (modal approximation) counterpart, subject to various boundary conditions (BCs). These foundational results provide the theoretical basis for our subsequent analysis of the weakly coupled system $(S_w)$.

\subsection{Spectral Analysis of System ($S_s$)}
In this subsection, we review some results, see \citet{Hansen1992}, on the asymptotic behavior of the spectrum of $\mathcal{S}$ subject to the boundary conditions \textbf{(BCs)}. We recall that the eigenvalues of the operator $\mathcal{S}$ asymptotically
fall on two branches: one branch is along the negative horizontal axis in the complex plane and the other branch is asymptotic to a vertical line. The following result is proved in \citet{guo1997asymptotic}. \citet{Hansen1992,khodja1997dynamical} showed a similar result.

\begin{theorem}[\citet{guo1997asymptotic}]
Asymptotically, there are at most two branches for the eigenvalues of operator $\mathcal{S}$. One branch is along the negative axis and the other branch approaches a vertical line $\Re\lambda = -\frac{\gamma^2}{2}$ parallel to the imaginary
axis.
\end{theorem}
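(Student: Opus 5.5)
We carry out the analysis in the Dirichlet--Dirichlet setting on $\Omega=(0,\pi)$ (other intervals follow by rescaling), and reduce the spectral problem for $\mathcal{S}$ to a sequence of $3\times 3$ matrix problems by a modal (Fourier) decomposition. Writing the eigenvalue problem $\mathcal{S}z=\lambda z$ in terms of $u,\theta$ gives
\[
\lambda^2 u - u'' + \gamma\theta' = 0,\qquad \lambda\theta - \theta'' + \gamma\lambda u' = 0 .
\]
For Dirichlet data on $u$ and $\theta$, the coupling terms map $\sin$ modes to $\cos$ modes, so the system does not decouple exactly. The standard remedy (as in \citet{Hansen1992}) is to observe that the boundary conditions compatible with exact decoupling are $u=0$, $\theta'=0$ (Case 2). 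In that case $u=a\sin(kx)$, $\theta=b\cos(kx)$, and each $k\in\mathbb{N}$ yields the characteristic polynomial
\[
p_k(\lambda)=(\lambda^2+k^2)(\lambda+k^2)+\gamma^2k^2\lambda=0 .
\]
For the remaining cases we argue that the eigenvalues are small perturbations of these, by writing the general solution of the ODE system as a combination of $e^{\mu x}$ with $\mu$ roots of the quartic $(\lambda^2-\mu^2)(\lambda-\mu^2)-\gamma^2\lambda\mu^2=0$. Imposing the boundary conditions gives a $4\times4$ determinant, whose zeros we compare with those of $p_k$ via Rouché's theorem for large $|\lambda|$.

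Next, we perform the asymptotic analysis of the cubic $p_k$ as $k\to\infty$. \textbf{Parabolic branch:} set $\lambda=-k^2+\delta$. Then $(k^4+O(k^2\delta))\delta-\gamma^2k^4+\dots=0$, which gives $\delta\to\gamma^2$, so $\lambda_k^{(1)}=-k^2+\gamma^2+O(k^{-2})$, lying along the negative real axis. \textbf{Hyperbolic branches:} set $\lambda=\pm ik+\epsilon$. Then $\lambda^2+k^2=\pm2ik\epsilon+\epsilon^2$ and $\lambda+k^2\approx k^2$, so
\[
\pm 2ik\epsilon\,k^2 + \gamma^2k^2(\pm ik)\approx 0,
\]
hence $\epsilon\to-\gamma^2/2$. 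Thus $\lambda_k^{(2,3)}=\pm ik-\tfrac{\gamma^2}{2}+O(k^{-1})$. In each case we make the expansion rigorous by applying Rouché's theorem on a small disk around the leading-order guess. The sum of the three roots equals $-k^2$, which serves as a consistency check.

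Finally, we show that these exhaust the spectrum asymptotically. This requires the following facts: $\mathcal{S}$ has compact resolvent (true on the bounded interval); every eigenvalue corresponds to some mode $k$ (by completeness of $\{\sin kx\}$ and $\{\cos kx\}$); and for large $k$ the three roots found are all roots of the cubic. Hence at most two asymptotic branches exist: one real, and one complex-conjugate pair approaching $\Re\lambda=-\gamma^2/2$.

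The main obstacle is the Dirichlet--Dirichlet and Neumann cases, where exact modal decoupling fails. There, the Rouché comparison between the $4\times4$ boundary determinant and the decoupled characteristic function requires careful uniform bounds on the exponentially large and small terms $e^{\pm\mu\pi}$. I would first try to isolate the dominant exponentials and show that the determinant equals $\sinh(\mu_1\pi)\sinh(\mu_2\pi)(1+O(|\lambda|^{-1/2}))$, as done in \citet{guo1997asymptotic}.
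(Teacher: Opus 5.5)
The paper gives no proof of this theorem; it only quotes \citet{guo1997asymptotic}, so your proposal has to stand on its own. Your decoupled computation is correct. With $u=0$, $\theta'=0$, the ansatz $u=a\sin kx$, $\theta=b\cos kx$ gives $p_k(\lambda)=\lambda^3+k^2\lambda^2+(1+\gamma^2)k^2\lambda+k^4$, whose roots are $-k^2+\gamma^2+O(k^{-2})$ and $\pm ik-\gamma^2/2+O(k^{-1})$.

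One correction: $u'=0$, $\theta=0$ decouples just as exactly. Take $u=\cos kx$, $\theta=\sin kx$; you get the same $p_k$. So only Dirichlet--Dirichlet and Neumann--Neumann need the boundary determinant.

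\textbf{The gap.} The gap is precisely there, and Dirichlet--Dirichlet is the case in which $\mathcal{S}$ is posed in \eqref{sc_thrm}. You only announce it, and the identity you plan to prove is false as written. Suppose $\Delta(\lambda)=\sinh(\mu_1\pi)\sinh(\mu_2\pi)\bigl(1+O(|\lambda|^{-1/2})\bigr)$ held with a nonvanishing correction factor. Then the large Dirichlet--Dirichlet eigenvalues would be exactly roots of the $p_k$, which they are not: at a point with $\mu_1=ik$, the determinant below equals $2q_1q_2\bigl(1-(-1)^k\cosh\mu_2\pi\bigr)\neq0$. The correction therefore has to be additive.

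\textbf{How to close it.} Give the modes $e^{\pm\mu_jx}$ the $\theta$-coefficients $\pm q_j$, with $q_j=(\mu_j^2-\lambda^2)/(\gamma\mu_j)$. The determinant then has exactly the structure of \eqref{det_DN}, with $(l,m,a,b)$ replaced by $(q_1,q_2,\mu_1,\mu_2)$, i.e.
\[
\Delta(\lambda)=(q_1^2+q_2^2)\sinh\mu_1\pi\,\sinh\mu_2\pi+2q_1q_2\bigl(1-\cosh\mu_1\pi\,\cosh\mu_2\pi\bigr).
\]
Since $\mu_1^2=\lambda^2+\gamma^2\lambda+\gamma^2+O(|\lambda|^{-1})$ and $\mu_2^2=\lambda-\gamma^2+O(|\lambda|^{-1})$, one has $q_1\to\gamma$ and $q_2\sim-\lambda^{3/2}/\gamma$. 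Hence
\[
\Bigl|\frac{\Delta(\lambda)}{q_2^2}-\sinh\mu_1\pi\,\sinh\mu_2\pi\Bigr|\le C|\lambda|^{-3/2}\bigl(1+|\cosh\mu_1\pi\,\cosh\mu_2\pi|\bigr).
\]
Combine this with $|\sinh\pi z|\ge c(r)\max\{1,|\cosh\pi z|\}$ outside the discs of radius $r$ about $i\mathbb{Z}$. This rules out large eigenvalues away from those discs. Rouch\'e then gives exactly one eigenvalue in each disc:
\begin{itemize}
\item near the imaginary axis, apply it in the variable $\mu_1$, where $|\tanh\mu_2\pi|\to1$;
\item near the negative real axis, apply it in $\mu_2$, where $|\tanh\mu_1\pi|\to1$.
\end{itemize}
Because $\sinh\mu_j\pi=0$ iff $\mu_j\in i\mathbb{Z}$ iff $p_k(\lambda)=0$ for some $k$, the discs are centred exactly at the roots you computed, so your cubic asymptotics transfer. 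For example, the hyperbolic eigenvalues satisfy $\mu_1=ik+O(k^{-3/2})$, hence $\lambda=ik-\gamma^2/2+O(k^{-1})$.

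Neumann--Neumann goes the same way. Put $\pm\mu_j$ in the $u'$-rows and $\mu_jq_j$ in the $\theta'$-rows. The relative size of the off-diagonal term is again $O(|\lambda|^{-3/2})$.
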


\subsection{Exponential Stability of System $(S_s)$}

The asymptotic behavior of solutions to the strongly coupled thermoelastic system has been the subject of extensive investigation. The pioneering work of \citet{dafermos1968existence} established that the energy of the system decays to zero as $t\to\infty$, though no explicit decay rate was given. Subsequent breakthroughs by \citet{Hansen1992}, \citet{liu1993exponential}, and \citet{rivera1992energ} demonstrated that, under suitable boundary conditions, the system generates an exponentially stable semigroup on the corresponding energy space.

More precisely, let $\mathcal{S}$ be the infinitesimal generator of the semigroup $T(t):=e^{t\mathcal{S}}, t>0$, associated with system $(S_s)$. Then, there exist constants $M>0$ and $\alpha>0$ such that
\[
\|T(t)z_0\|_\mathcal{H}\leqslant M e^{-\alpha t}\|z_0\|_\mathcal{H},\quad\text{ for all } t>0,\quad\text{ for all } z_0\in\mathcal{H}.
\]
The exponential stability is typically proven using energy multipliers, compactness-uniqueness arguments, or frequency domain methods.

\subsection{Uniform Exponential Stability of Approximation of $(S_s)$}

The following theorem establishes conditions for the uniform exponential stability of a sequence of \(C_{0}\)-semigroups and their corresponding infinitesimal generators. This theorem extends the well-known frequency domain
approach of exponential stability proved by \citet{gearhart1978spectral,pruss1984spectrum,huang1985characteristic}.
\begin{theorem}[\citet{liu1994uniform}, Corollary 2.3]
\label{thm3}
For $T_{n}(\cdot)$, $n \in \mathbb{N}$, a sequence of contraction semigroups in the Hilbert spaces \( \mathcal{H}_{n} \), $n \in \mathbb{N}$, with \( \mathcal{A}_{n} \) the corresponding infinitesimal generators, the sequence $T_{n}(\cdot)$, $n \in \mathbb{N}$, is uniformly exponentially stable if and only if the following conditions are satisfied
\begin{enumerate}
\item $\sup_{n \in \mathbb{N}} \{ \text{Re} \lambda : \lambda \in \sigma(\mathcal{A}_{n}) \} = \sigma_{0} < 0,$
\item $\sup_{\text{Re} \lambda \geq 0, n \in \mathbb{N}} \{ \| (\lambda I - \mathcal{A}_{n})^{-1} \| \} < \infty.$
\end{enumerate}

\end{theorem}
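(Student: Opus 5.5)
The plan is to derive Theorem \ref{thm3} from the single-semigroup Gearhart--Pr\"uss--Huang theorem, with explicit tracking of constants. The key fact is that, for a contraction semigroup on a Hilbert space, the exponential decay constants can be expressed purely in terms of the resolvent bound on the closed right half-plane. Uniform resolvent bounds in $n$ will then give uniform decay constants. For the converse, uniform decay yields uniform resolvent bounds through the Laplace transform.

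\textbf{Necessity.} Suppose $\|T_n(t)\|\le Me^{-\alpha t}$ with $M,\alpha$ independent of $n$. For $\Re\lambda>-\alpha$ the resolvent is the Laplace transform $(\lambda I-\mathcal A_n)^{-1}=\int_0^\infty e^{-\lambda t}T_n(t)\,dt$. Hence
\[
\|(\lambda I-\mathcal A_n)^{-1}\|\le \frac{M}{\Re\lambda+\alpha}.
\]
This gives condition 2 with bound $M/\alpha$. It also shows that $\sigma(\mathcal A_n)\subset\{\Re\lambda\le-\alpha\}$ for every $n$, which gives condition 1 with $\sigma_0\le-\alpha<0$.

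\textbf{Sufficiency.} Let $K:=\sup_{\Re\lambda\ge0,\,n}\|(\lambda I-\mathcal A_n)^{-1}\|<\infty$. Fix $z\in\mathcal H_n$ and $\omega>0$, and set $f(t)=e^{-\omega t}T_n(t)z$.

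First, by contractivity, $\|f\|_{L^2(0,\infty;\mathcal H_n)}^2\le \|z\|^2/(2\omega)$.

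Second, apply Plancherel to the Fourier--Laplace transform $\hat f(\tau)=R(\omega+i\tau,\mathcal A_n)z$. Use the resolvent identity
\[
R(i\tau)=R(\omega+i\tau)+\omega R(i\tau)R(\omega+i\tau).
\]
This gives $\|R(i\tau)z\|\le(1+\omega K)\|R(\omega+i\tau)z\|$. Therefore
\[
\int_0^\infty\|T_n(t)z\|^2\,dt=\frac1{2\pi}\int_{\mathbb R}\|R(i\tau)z\|^2\,d\tau\le(1+\omega K)^2\,\frac{1}{2\pi}\int_{\mathbb R}\|R(\omega+i\tau)z\|^2\,d\tau,
\]
and the right-hand side equals $(1+\omega K)^2\|f\|_{L^2}^2$. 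Choosing $\omega=1/K$, we conclude
\[
\int_0^\infty\|T_n(t)z\|^2\,dt\le C_K\|z\|^2,
\]
with $C_K$ depending only on $K$.

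Third, apply Datko's argument with contractivity. Since $\|T_n(t)z\|\le\|T_n(s)z\|$ for $s\le t$, we get $t\|T_n(t)z\|^2\le\int_0^t\|T_n(s)z\|^2\,ds\le C_K\|z\|^2$. Pick $t_0=4C_K$; then $\|T_n(t_0)\|\le 1/2$ for every $n$. The semigroup property then yields
\[
\|T_n(t)\|\le 2e^{-(\ln2/t_0)t},
\]
with constants independent of $n$.

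In a finite-dimensional or compact-resolvent setting, the Plancherel step requires $i\mathbb R\subset\rho(\mathcal A_n)$. This is exactly what condition 1 guarantees, together with condition 2, which also rules out accumulation of the spectrum near the axis as $n$ varies.

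\textbf{Main obstacle.} The main difficulty is the uniformity bookkeeping. Every constant in the Gearhart--Pr\"uss/Huang proof must depend only on $K$ and the contraction bound, and not on $n$ or on the dimension of $\mathcal H_n$. Plancherel in Hilbert space and the Datko step are dimension-free, so this should go through. The one check still needed is the justification of the vector-valued Plancherel identity on the imaginary axis. I would handle it by working at $\Re\lambda=\varepsilon>0$ and letting $\varepsilon\to0$ using the uniform bound $K$.
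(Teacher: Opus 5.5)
Your proof is correct. The paper gives no argument of its own for this statement. It quotes the result from Liu and Zheng as the sequence version of the Gearhart--Pr\"uss--Huang frequency-domain theorem. What you wrote is exactly the Huang-type proof behind such a result: the Laplace transform for necessity, then Plancherel plus the resolvent identity to get a uniform bound on $\int_0^\infty\|T_n(t)z\|^2\,dt$, then Datko's argument via contractivity. All constants depend only on $K$, so you have supplied a self-contained proof where the paper only has a citation.

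The step you flagged closes as you propose. For $0<\varepsilon<\omega$, use the identity $R(\varepsilon+i\tau,\mathcal A_n)=\bigl(I+(\omega-\varepsilon)R(\varepsilon+i\tau,\mathcal A_n)\bigr)R(\omega+i\tau,\mathcal A_n)$ together with Plancherel on the line $\operatorname{Re}\lambda=\varepsilon$. This gives $\int_0^\infty e^{-2\varepsilon t}\|T_n(t)z\|^2\,dt\le(1+\omega K)^2\|z\|^2/(2\omega)$. Monotone convergence as $\varepsilon\downarrow0$, with $\omega=1/K$, yields $C_K=2K$. Hence $\|T_n(t)\|\le 2e^{-(\ln 2)\,t/(8K)}$ for all $n$.

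One correction to your closing remark. The argument only uses the resolvent bound on $\{\operatorname{Re}\lambda>0\}$, and condition 1 plays no role in sufficiency. In fact condition 1 is a consequence of condition 2. Applying $\|R(\lambda,\mathcal A_n)\|\ge 1/\operatorname{dist}(\lambda,\sigma(\mathcal A_n))$ at $\lambda=i\tau$ forces $\sigma(\mathcal A_n)\subset\{\operatorname{Re}\lambda\le-1/K\}$. Likewise, $i\mathbb R\subset\rho(\mathcal A_n)$ is already built into condition 2; it is not something condition 1 supplies.
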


\subsubsection{Approximation of System $(S_s)$ under Dirichlet--Dirichlet (BCs)}
\label{1_sub_DD}

Considering the Dirichlet--Dirichlet BCs, both ends of the rod are clamped to the reference temperature, and
\[
\left. u \right|_{x=0, \pi} = 0 \quad \text{and} \quad \left. \theta \right|_{x=0, \pi} = 0 \quad \text{for} \quad t > 0.
\]
In their study, \citet{liu1994uniform} demonstrate the uniform exponential stability of system $(S_s)$ using modal approximation scheme. This approximation uses the eigenvectors of the system $(S_s)$ with \( \gamma = 0 \) as basis vectors. 
\[
\phi_{j}=\sqrt{\frac{2}{\pi}} \frac{1}{j} \sin j x, \quad \psi_{j}=\sqrt{\frac{2}{\pi}} \sin j x, \quad \xi_{j}=\sqrt{\frac{2}{\pi}} \sin j x, \quad j=1, \ldots, n.
\]
In \eqref{M_n}, we consider 
\[
\begin{aligned}
& \left( M_{n}^{(1)} \right)_{ij} = \left( \partial_x \phi_{i},  \partial_x \phi_{j} \right)_{L^{2}}, \quad \left( M_{n}^{(2)} \right)_{ij} = \left( \psi_{i}, \psi_{j} \right)_{L^{2}}, \quad \left( M_{n}^{(3)} \right)_{ij} = \left( \xi_{i}, \xi_{j} \right)_{L^{2}}, \\
\end{aligned}
\]
\[
\begin{aligned}
& \left( \tilde{D}_{n} \right)_{ij} = \left(  \partial_x \phi_{i},  \partial_x \psi_{j} \right)_{L^{2}}, \quad \left( \tilde{F}_{n} \right)_{ij} = \left( \partial_x \xi_{i}, \psi_{j} \right)_{L^{2}}, \quad \left( G_{n} \right)_{ij} = \left(  \partial_x \xi_{i},  \partial_x \xi_{j} \right)_{L^{2}}.
\end{aligned}
\]
As a result, we obtain the following representation of the matrix $\mathcal{S}_n$
\[
\mathcal{S}_{n}=\left[\begin{array}{ccc}
0 & D_{n} & 0 \\
-D_{n} & 0 & -\gamma F_{n} \\
0 & \gamma F_{n}^{T} & -D_{n}^{2}
\end{array}\right],
\]
where \( D_{n} \) and \( F_n \) are defined as 
$$
D_{n}=\left[\begin{array}{lll}
1 & & \\
& \ddots & \\
& & n
\end{array}\right], \quad F_n = (F_{ij})_{1\leqslant i,j\leqslant n},\text{ with } F_{ij}= \begin{cases}-\frac{4}{\pi} \frac{i j}{i^{2}-j^{2}}, & |i-j|\text { is odd } \\
0, & \text { otherwise. }\end{cases}
$$
\begin{theorem}[\citet{liu1994uniform}, Theorem 3.1]
\label{thm_LZb94}
The semigroups generated by \( \mathcal{S}_{n} \) are uniformly exponentially stable.
\end{theorem}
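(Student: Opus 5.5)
We would verify the two conditions of Theorem \ref{thm3} for the matrices $\mathcal{S}_n$. Both will follow from a single uniform resolvent estimate, argued by contradiction as in the frequency-domain proof of exponential stability. Since $\mathcal{S}_n$ is finite dimensional and dissipative, with $\Re(\mathcal{S}_n y,y)=-\|D_n y_3\|^2$, we first check that each $\mathcal{S}_n$ has no spectrum on the imaginary axis. Suppose $\mathcal{S}_n y=i\beta y$. Then $D_n y_3=0$, so $y_3=0$. The third equation gives $F_n^T y_2=0$, and the first two give $D_n y_2=i\beta y_1$ and $-D_n y_1=i\beta y_2$, so $y$ is a combination of modes $e_k$ with $\beta^2=k^2$. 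For a single index $k$, $F_n^T e_k=0$ is impossible, because the $k$-th row of $F$ has nonzero entries $F_{jk}$ for $|j-k|$ odd (as soon as $n\ge 2$; the case $n=1$ is checked by hand). Hence $y=0$.

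It then suffices to show $\sup_{n,\beta\in\mathbb{R}}\|(i\beta-\mathcal{S}_n)^{-1}\|<\infty$. Indeed, a uniform bound on the imaginary axis extends to a uniform spectral gap $\sigma_0<0$ by a Neumann series perturbation, and to the closed right half plane by the Hille--Yosida/Phragm\'en-type argument for contractions. This gives both conditions 1 and 2 of Theorem \ref{thm3}.

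Assume the contrary. Then there are $n_k$, $\beta_k$, $y^k=(y_1^k,y_2^k,y_3^k)$ with $\|y^k\|=1$ and $f^k:=(i\beta_k-\mathcal{S}_{n_k})y^k\to 0$. Taking the real part of the inner product with $y^k$ gives $\|D_{n_k}y_3^k\|\to0$, and hence $\|y_3^k\|\to 0$ since $D_n\ge I$. Bounded $\beta_k$ is handled by a compactness argument using the finite-mode structure and the result above. The main case is $|\beta_k|\to\infty$.

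The key step, and the main obstacle, is to transfer the decay of $y_3$ to $y_2$ through the third equation,
\[
i\beta y_3-\gamma F^T y_2+D^2 y_3=f_3,
\]
and then to $y_1$ via the first two equations. We plan to exploit the fact that $F_n$ is the matrix of $\partial_x$ from the sine basis to the sine basis (the Galerkin projection of $-\partial_x$ between sine and cosine expansions), so that
\[
F_n^T y_2=P_n\big(\partial_x\text{ of the sine series } v\big)\text{ re-expanded in sines}.
\]
We would pair the third equation with $D^{-2}F^T y_2$, or equivalently use the continuous multiplier argument transcribed to coefficients, to obtain $\gamma\|D^{-1}F^Ty_2\|\lesssim \|Dy_3\|+|\beta|\|D^{-1}y_3\|+o(1)$. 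The $|\beta|\|D^{-1}y_3\|$ term must be controlled by pairing instead with $(i\beta+D^2)^{-1}$ weights, for which $\|(i\beta+D^2)^{-1}D\|\le C|\beta|^{-1/2}$ helps.

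Next, the second equation paired with $y_2$, together with the first equation paired with $y_1$, gives $\|y_1\|^2-\|y_2\|^2\to0$. A further multiplier, namely the second equation paired with $F D^{-1}$-weighted $y_3$ terms, should show that $\|y_2\|\to0$. This uses the lower bound $\|F^T e_j\|\ge c>0$ uniformly in $j$ and $n$, which holds since $\partial_x$ maps $\sin jx$ to $j\cos jx$ and the cosine's sine expansion has $\ell^2$-norm comparable to $j$. Verifying this uniform lower bound in $n$, despite truncation effects near $j\approx n$, is the delicate point; we would try to prove it directly from $\sum_i (ij/(i^2-j^2))^2$ over odd $|i-j|$ restricted to $i\le n$, which is $\gtrsim j^2$ uniformly. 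Having then $\|y^k\|\to0$ contradicts $\|y^k\|=1$.
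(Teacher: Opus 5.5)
Your overall architecture matches what the paper attributes to Liu--Zheng: rule out spectrum on $i\mathbb{R}$, prove $\sup_{n,\beta}\|(i\beta-\mathcal{S}_n)^{-1}\|<\infty$ by contradiction, extend to $\Re\lambda\ge0$ and to a uniform spectral gap by a Neumann series, then apply Theorem~\ref{thm3}. The reductions, the dissipation step $\|D_ny_3\|\to0$ and the equipartition $\|y_1\|^2-\|y_2\|^2\to0$ are correct.

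\textbf{The gap.} It sits exactly at the step you call the main obstacle. Pairing with $D^{-2}F^Ty_2$ leaves $|\beta|\,\|D^{-1}y_3\|$, which nothing makes small. The $(i\beta+D^2)^{-1}$ version does give $\|D(i\beta+D^2)^{-1}F^Ty_2\|\to0$, but you then need this quantity to control $\|y_2\|$, and the column bound $\|F^Te_j\|\ge c$ cannot do that, for two reasons. First, it says nothing about superpositions of modes. $F_n$ is antisymmetric ($F_{ji}=-F_{ij}$), hence singular for odd $n$; for instance $F_3^T(9/5,0,1)^T=0$. So no lower bound on $F^T$ alone can force $y_2\to0$. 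Second, the weight $l/|i\beta+l^2|$ is much smaller than $1/l$ unless $l$ is close to $|\beta|$, so a bound of size $c$ is on the wrong scale. The sentence that a further multiplier ``should show'' $\|y_2\|\to0$ is therefore not an argument, and the truncation issue near $j\approx n$ is not where the difficulty lies.

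\textbf{The missing idea} is frequency localization coming from the diagonal conservative block. Set $g=f_2-\gamma Fy_3$; note $\|Fy_3\|\le\|Dy_3\|\to0$, since $F$ is a compression of $\partial_x$. The $l$-th components of the first two equations form a $2\times2$ system with determinant $l^2-\beta^2$, whence
\[
|(y_1)_l|+|(y_2)_l|\le\frac{2\big(|(f_1)_l|+|g_l|\big)}{\big|\,l-|\beta|\,\big|},\qquad l=1,\dots,n.
\]
So, up to $o(1)$ in $\ell^2$, $(y_1,y_2)$ lives on the single mode $j$ nearest to $|\beta|$, and it tends to $0$ if no $l\le n$ lies within $1/2$ of $|\beta|$.

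Now read off the $(j-1)$-th component of the heat equation (the $2$nd if $j=1$):
\[
(i\beta+(j-1)^2)(y_3)_{j-1}-\gamma(F^Ty_2)_{j-1}=(f_3)_{j-1}.
\]
Use $|(y_3)_{j-1}|\le\|Dy_3\|/(j-1)$, $|F_{j,j-1}|=\frac4\pi\frac{j(j-1)}{2j-1}\ge\frac{4j}{3\pi}$, and $\sum_lF_{l,j-1}^2\le(j-1)^2$. Dividing by $j$ gives
\[
|(y_2)_j|\lesssim\|Dy_3\|+\|f_3\|+\|y_2-(y_2)_je_j\|\to0 .
\]
Then $(y_1)_j\to0$ from the first equation, so $y\to0$, which is the desired contradiction.

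This argument works for all $\beta$ at once. You therefore also avoid the bounded-$\beta$ compactness step, which with $n_k\to\infty$ would require passing to the infinite-dimensional limit rather than using the fixed-$n$ result. The mode $j-1<j\le n$ is never affected by truncation.

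\textbf{The case $n=1$.} Your remark that $n=1$ ``is checked by hand'' is wrong. $F_1=0$ because $\int_0^\pi\cos x\sin x\,dx=0$, so $\mathcal{S}_1$ has eigenvalues $\pm i$ and $T_1$ is not exponentially stable. The statement, and your argument, hold only for $n\ge2$.
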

\citet{liu1994uniform} provided significant insights into the uniform stability of approximations of thermoelastic systems, as formalized in Theorem~\ref{thm_LZb94}. Using a contradiction argument, they showed that all eigenvalues of the operator \( \mathcal{S}_n \) have strictly negative real parts for every $n\in\mathbb{N}$, and that the resolvent family satisfies the uniform bound
\[
\underset{\Re \lambda \geq 0, n \in \mathbb{N}}{\sup} \{ \| ( \lambda I-\mathcal{S}_{n})^{-1} \| \} < \infty.
\]
By Theorem~\ref{thm3}, this resolvent estimate guarantees the uniform exponential stability of the discrete scheme associated to system $(S_s)$.

To further reinforce these results, the convergence of the approximate semigroups is established in the following theorem.
\begin{theorem}[\citet{liu1994uniform} Theorem 3.2]
Let \( T_{n}(\cdot) \) be the semigroup generated by $\mathcal{S}_{n}$. The convergence of \( T_{n}(\cdot) \), $n \in \mathbb{N}$, to \( T(\cdot) \) in \( \mathcal{H} \), is uniform over bounded $t$-intervals.
\end{theorem}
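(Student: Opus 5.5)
The plan is to prove the convergence with the Trotter--Kato approximation theorem in its Hilbert-space form for contraction semigroups, as used for Galerkin schemes by \citet{burns1991approximations}. Let $P_n:\mathcal{H}\to\mathcal{H}_n$ be the orthogonal projection onto the span of $E_1,\dots,E_{3n}$. In the modal basis $P_n$ is simply truncation of the sine expansions: of $\partial_x u$ in the cosine system $\{\sqrt{2/\pi}\cos jx\}$, and of $v,\theta$ in $\{\sqrt{2/\pi}\sin jx\}$. The target statement is that $T_n(t)P_nz\to T(t)z$ in $\mathcal{H}$, uniformly for $t$ in bounded intervals. Trotter--Kato gives this if three conditions hold. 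First, $T_n$ and $T$ are contractions, uniformly in $n$. Second, $P_nz\to z$ for every $z\in\mathcal{H}$. Third, for some $\lambda$ with $\Re\lambda>0$ and all $z$ in a core of $\mathcal{S}$, we have $(\lambda-\mathcal{S}_n)^{-1}P_nz\to(\lambda-\mathcal{S})^{-1}z$. The third condition can be replaced by consistency: $\mathcal{S}_nP_nz\to\mathcal{S}z$ on a core $D$ such that $(\lambda-\mathcal{S})D$ is dense.

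The first two conditions are immediate. The dissipativity computation in Section 2 gives $\Re(\mathcal{S}_n y,y)\le -(G_n\cdot,\cdot)\le0$, so each $T_n$ is a contraction, and $\mathcal{S}$ itself is dissipative by the energy identity. Moreover $P_n\to I$ strongly because the cosine and sine systems are complete orthonormal bases of $L^2(0,\pi)$.

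For consistency I would take as core $D$ the finite linear combinations of the eigenfunctions of the uncoupled ($\gamma=0$) problem, i.e. triples whose three components are finite trigonometric sums. This set is invariant under $P_m$ for large $m$. Next I would identify $\mathcal{S}_n$ with $P_n\mathcal{S}|_{\mathcal{H}_n}$, which holds because the variational system is exactly a Galerkin projection. For $z\in D$ with $z\in\mathcal{H}_N$ one then has $\mathcal{S}_nP_nz=P_n\mathcal{S}z$ for $n\ge N$. The only nontrivial part of $\mathcal{S}z$ is the coupling term $\gamma\partial_x\theta$ (and $\gamma\partial_xv$ in the third row). These map $\sin jx$ to $\cos jx$, which is not a finite sine sum; this is exactly where the infinite matrix $F$ comes from. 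It still lies in $L^2$, so $P_n\mathcal{S}z\to\mathcal{S}z$ by the second condition. Hence consistency holds on $D$.

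The main obstacle is checking that $D$ is a core, i.e. that $(\lambda-\mathcal{S})D$ is dense in $\mathcal{H}$. The difficulty is that $D$ is not invariant under $\mathcal{S}$ because of the coupling. I would handle this by treating $\gamma\mathcal{C}$, with $\mathcal{C}$ the off-diagonal coupling part, as a perturbation of the diagonal operator $\mathcal{S}_0$. $D$ is a core for $\mathcal{S}_0$, since $D$ consists of eigenvectors of $\mathcal{S}_0$. The coupling is relatively bounded with respect to $\mathcal{S}_0$ with relative bound $0$: $\|\partial_x\theta\|\le\varepsilon\|\partial_x^2\theta\|+C_\varepsilon\|\theta\|$, and similarly for $\partial_x v$ using $\partial_x^2\theta$ from the heat component. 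Hence $\mathcal{D}(\mathcal{S})=\mathcal{D}(\mathcal{S}_0)$ with equivalent graph norms, so $D$ is a core for $\mathcal{S}$ as well. If this estimate proves delicate for the $\partial_x v$ term, the fallback is to verify the resolvent condition directly. For $f\in D$, I would bound $(\lambda-\mathcal{S}_n)^{-1}P_nf-P_n(\lambda-\mathcal{S})^{-1}f$ using the uniform resolvent bound from Theorem \ref{thm_LZb94} together with the tail estimate $\|(I-P_n)\mathcal{S}(\lambda-\mathcal{S})^{-1}f\|\to0$. Either route yields the hypotheses of Trotter--Kato, and hence uniform convergence on bounded $t$-intervals.
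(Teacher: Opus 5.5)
You follow the same route as the paper, but you choose a different dense set, and one of your justifications is wrong (the conclusion it supports still holds). The paper does not prove this statement; it quotes it from \citet{liu1994uniform}. Its proof of the analogous Theorem~\ref{theorem_3} uses your skeleton: Trotter--Kato, uniform contractivity from dissipativity, and consistency on a dense set $D$ whose image under $\lambda-\mathcal{S}$ is dense.

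\textbf{Comparison.} The paper takes $D$ to be a regularity class ($H^4\times H^3\times H^4$ data). It asserts without proof that the image of this set under $I-\mathcal{W}$ is dense. It then writes $\mathcal{W}_nz-\mathcal{W}z$ as a pure Fourier tail, i.e.\ it treats the modal operator as commuting with truncation. For $(S_s)$ with Dirichlet--Dirichlet conditions this is false, because the coupling matrix $F$ is full. On a regularity core one would therefore also have to show $P_n\mathcal{S}(I-P_n)z\to0$, which needs graph-norm convergence of the truncations. Your choice avoids this. With $D$ the finite modal sums and the Galerkin identity $\mathcal{S}_n=P_n\mathcal{S}|_{\mathcal{H}_n}$, consistency is exact: $\mathcal{S}_nP_nz=P_n\mathcal{S}z$ for $n\ge N$. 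All the analysis moves into the core property, which you actually justify. This is the cleaner route when the coupling is not diagonal in the modal basis.

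\textbf{The misjustified step.} The term $\gamma\partial_xv$ is not $\mathcal{S}_0$-bounded with relative bound $0$, and $\partial_x^2\theta$ cannot control it. In fact $\|\partial_xv\|_{L^2}$ is exactly the norm of the first component of $\mathcal{S}_0z$, so the relative bound is $\gamma$. It is attained by $z=(0,v,0)$ with $v$ highly oscillatory. You do not need a small bound, though. To transfer the core from $\mathcal{S}_0$ to $\mathcal{S}$ you only need two facts:
\begin{itemize}
\item $\mathcal{D}(\mathcal{S})=\mathcal{D}(\mathcal{S}_0)$. This is the standard description of the domain. It also follows for every $\gamma$ from the triangular structure of $\mathcal{S}z=f$: first $\partial_xv=f_1$, then $\partial_x^2\theta=f_3+\gamma\partial_xv$, then $\partial_x^2u=f_2+\gamma\partial_x\theta$.
\item $\|\mathcal{S}z\|\le C(\|\mathcal{S}_0z\|+\|z\|)$. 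This is trivial.
\end{itemize}

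\textbf{Smaller corrections.}
\begin{itemize}
\item The system $\{\sqrt{2/\pi}\cos jx\}_{j\ge1}$ is complete only in the mean-zero subspace of $L^2(0,\pi)$. That suffices, since $\partial_xu$ has zero mean for $u\in H^1_0$.
\item In your fallback, the term to control is $(\lambda-\mathcal{S}_n)^{-1}P_n\mathcal{S}(I-P_n)(\lambda-\mathcal{S})^{-1}f$. It is not $(I-P_n)\mathcal{S}(\lambda-\mathcal{S})^{-1}f$, which tends to zero trivially.
\item The resolvent bound needed in the fallback is just $1/\Re\lambda$ from contractivity; Theorem~\ref{thm_LZb94} is not required.
\end{itemize}
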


\subsubsection{Approximation of System $(S_s)$ under Dirichlet--Neumann (BCs)}
\label{1_sub_DN}

Considering the Dirichlet--Neumann \textbf{(BCs)}, the rod is clamped, and heat insulation is applied. The boundary conditions at both ends are given by
\begin{equation*}
u|_{x=0,\pi} = 0 \quad \text{and} \quad  \partial_x\theta|_{x=0,\pi} = 0 \quad \text{for} \quad t > 0.
\end{equation*}

Under Dirichlet-Neumann boundary conditions, we consider the following basis vectors
\[
\phi_{j}=\sqrt{\frac{2}{\pi}} \frac{1}{j} \sin j x, \quad \psi_{j}=\sqrt{\frac{2}{\pi}} \sin j x, \quad \xi_{j}=\sqrt{\frac{2}{\pi}} \cos j x, \quad j=1, \ldots, n.
\]
This leads to the following representation of the matrix $\mathcal{S}_{n}$
\[
\mathcal{S}_{n}=\left[\begin{array}{ccc}
0 & D_{n} & 0 \\
-D_{n} & 0 & \gamma D_{n} \\
0 & -\gamma D_{n} & -D_{n}^{2}
\end{array}\right],
\]
with 
$$
D_{n}=\left[\begin{array}{lll}
1 & & \\
& \ddots & \\
& & n
\end{array}\right].
$$
By integrating the heat equation in system ($S_s$) with respect to $x$ and $t$, where $\theta_{0}(t)$ represents the initial temperature distribution of the rod, we obtain the equation
\[
\int_{0}^{\pi} \theta(x, t) \, dx = \int_{0}^{\pi} \theta_{0}(x) \, dx.
\]
Introducing the transformed dependent variable
\[
\bar{\theta} = \theta - \frac{1}{\pi} \int_{0}^{\pi} \theta_{0}(x) \, dx,
\]
and using a similar approach as demonstrated in Section \ref{1_sub_DD}, we can show that system $(S_s)$ remains uniformly exponentially stable under Neumann--Dirichlet boundary conditions. For further insights, we refer to \citet{liu1994uniform}.

\subsubsection{Approximation of system $(S_s)$ under Neumann--Dirichlet (BCs)}

Under Neumann--Dirichlet \textbf{(BCs)}, both ends of the rod are stress free, and fixed to the reference temperature i.e.
\begin{equation*}
\partial_x u|_{x=0,\pi} = 0\quad \text{and} \quad  \theta|_{x=0,\pi} = 0 \quad \text{for} \quad t > 0.
\end{equation*}
We use the following basis vectors
\[
\phi_{j}=\sqrt{\frac{2}{\pi}} \frac{1}{j} \cos j x, \quad \psi_{j}=\sqrt{\frac{2}{\pi}} \sin j x, \quad \xi_{j}=\sqrt{\frac{2}{\pi}} \sin j x, \quad j=1, \ldots, n.
\]
This leads to the following representation of the matrix $\mathcal{S}_{n}$
\[
\mathcal{S}_{n}=\left[\begin{array}{ccc}
0 & D_{n}^{T} & 0 \\
-D_{n} & 0 & -\gamma F_{n} \\
0 & \gamma F_{n}^{T} & -G_{n}
\end{array}\right],
\]
where \( D_{n} \), \( F_{n} \) and \( G_{n} \) are defined as
$$
D_{n}= (D_{ij})_{1\leqslant i,j\leqslant n},\text{ with } D_{ij} = \begin{cases}-\frac{4}{\pi} \frac{i j}{i^{2}-j^{2}}, & |i-j| \text { is even } \\
0 & \text { otherwise }\end{cases},$$
$$
F_n = (F_{ij})_{1\leqslant i,j\leqslant n},\text{ with } F_{ij} = \begin{cases}-\frac{4}{\pi} \frac{i j}{i^{2}-j^{2}}, & |i-j|\text { is odd } \\
0, & \text { otherwise }\end{cases},
$$ 
\[
G_n = \frac{2}{\pi} \left[\begin{array}{lll}
1 & & \\
& \ddots & \\
& & n^2
\end{array}\right].
\]
Following the approach of Section \ref{1_sub_DD}, we can show that system $(S_s)$ remains uniformly exponentially stable. For further insights, we refer to \citet{liu1994uniform}. 

\subsubsection{Approximation of System $(S_s)$ under Neumann--Neumann (BCs)}

Under Neumann--Neumann \textbf{(BCs)}, both ends of the rod are stress free, and heat insulation is applied i.e.
\begin{equation*}
\partial_x u|_{x=0,\pi} = 0 \quad \text{and} \quad  \partial_x\theta|_{x=0,\pi} = 0 \quad \text{for} \quad t > 0.
\end{equation*}
We use the following basis vectors
\[
\phi_{j}=\sqrt{\frac{2}{\pi}} \frac{1}{j} \cos j x, \quad \psi_{j}=\sqrt{\frac{2}{\pi}} \sin j x, \quad \xi_{j}=\sqrt{\frac{2}{\pi}} \cos j x, \quad j=1, \ldots, n.
\]
This leads to the following representation of the matrix $\mathcal{S}_{n}$
\[
\mathcal{S}_{n}=\left[\begin{array}{ccc}
0 & D_{n}^{T} & 0 \\
-D_{n} & 0 & -\gamma F_{n} \\
0 & \gamma F_{n}^{T} & -G_{n}
\end{array}\right],
\]
where \( D_{n} \), \( F_{n} \) and \(G_{n} \) are defined as
$$
D_{n} = (D_{ij})_{1\leqslant i,j\leqslant n},\text{ with } D_{ij} = \begin{cases}-\frac{4}{\pi} \frac{i j}{i^{2}-j^{2}}, & |i-j| \text { is even } \\
0 & \text { otherwise }\end{cases}, \quad F_n=\left[\begin{array}{lll}
1 & & \\
& \ddots & \\
& & n
\end{array}\right],
$$
$$
G_n = \frac{2}{\pi} \left[\begin{array}{lll}
1 & & \\
& \ddots & \\
& & n
\end{array}\right].
$$
Following the approach of Section \ref{1_sub_DD}, we can show that system $(S_s)$ remains uniformly exponentially stable. For further insights, we refer to \citet{liu1994uniform}. 

Following the same arguments of the proof of Theorem \ref{thm_LZb94} in \citet{liu1994uniform}, one can show the following result. These arguments will be used again in the sections that follow.
\begin{prop}
\label{stab_BC}
\begin{itemize}
    \item The semigroups generated by \( \mathcal{S}_{n} \) under Dirichlet-Neumann \textbf{(BCs)} are uniformly exponentially stable.
    \item The semigroups generated by \( \mathcal{S}_{n} \) under Neumann-Dirichlet \textbf{(BCs)} are uniformly exponentially stable.
    \item The semigroups generated by \( \mathcal{S}_{n} \) under Neumann-Neumann \textbf{(BCs)} are uniformly exponentially stable.
\end{itemize}
\end{prop}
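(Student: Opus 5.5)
In each of the three cases we verify the two conditions of Theorem~\ref{thm3} for the matrices $\mathcal{S}_n$, following the contradiction scheme of \citet{liu1994uniform}. The only structural facts needed are the following. First,
\[
\Re(\mathcal{S}_n y,y)=-(G_n y^{(3)},y^{(3)}),
\]
where $G_n$ is the diagonal heat block: $D_n^2$ or $\frac{2}{\pi}\mathrm{diag}(j^2)$. Second, $\mathcal{S}_n$ is the compression, to the span of the first $n$ modes, of the continuous operator $\mathcal{S}$ with the corresponding \textbf{(BCs)}. Since each $\mathcal{S}_n$ is dissipative, it generates a contraction semigroup. Hence $\sigma(\mathcal{S}_n)\subset\{\Re\lambda\le 0\}$, and $\|(\lambda-\mathcal{S}_n)^{-1}\|\le 1/\Re\lambda$ for $\Re\lambda>0$.

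\textbf{Reduction.} Condition 1 follows from condition 2 by a Neumann-series argument: a uniform bound $C$ on the resolvent on $i\mathbb{R}$ gives a uniform bound in the strip $\Re\lambda>-1/(2C)$. It is therefore enough to prove
\[
\sup_{\Re\lambda\ge0,\;n\in\mathbb{N}}\|(\lambda I-\mathcal{S}_n)^{-1}\|<\infty .
\]
Suppose this fails. Then there exist $n_k$, $\lambda_k$ with $\Re\lambda_k\ge0$, and unit vectors $y_k$ such that
\[
\|(\lambda_k-\mathcal{S}_{n_k})y_k\|\to 0 .
\]
By the a priori bound above, $\Re\lambda_k\to0$. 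Taking the real part of $((\lambda_k-\mathcal{S}_{n_k})y_k,y_k)$ then gives $(G_{n_k}y_k^{(3)},y_k^{(3)})\to0$. In particular $\|D y_k^{(3)}\|\to0$, i.e.\ the temperature component tends to zero in $H^1$.

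\textbf{The Dirichlet--Neumann case.} Here $F_n=D_n$ is diagonal, so the system decouples mode by mode into $3\times3$ blocks
\[
\begin{pmatrix}0&j&0\\-j&0&\gamma j\\0&-\gamma j&-j^2\end{pmatrix}.
\]
Uniform exponential stability reduces to a uniform resolvent bound for these blocks, uniform in $j$. A direct check does it: the third equation gives $\gamma j\,y^{(2)}_j=-(\lambda+j^2)y^{(3)}_j+o(1)$. Hence $|y^{(2)}_j|\lesssim (|\lambda|+j^2)|y^{(3)}_j|/j+o(1)$. Using $j|y^{(3)}_j|\to 0$ (from the dissipation) and the first two equations, which give $|\lambda|\sim j$ for the non-trivial part, we get $y^{(2)}\to0$ and then $y^{(1)}\to0$. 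This contradicts $\|y_k\|=1$. The zero mode (constant temperature) is removed beforehand by the mean-value normalization $\bar\theta$ described before the proposition.

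\textbf{The Neumann--Dirichlet and Neumann--Neumann cases.} Here $D_n$ and $F_n$ are no longer both diagonal, so we argue as in the proof of Theorem~\ref{thm_LZb94}. From the third block row we get
\[
\gamma F_n^T y^{(2)}=(\lambda+G_n)y^{(3)}+o(1).
\]
Pair this with suitably bounded test vectors; the discrete analogue of the multiplier $\int \theta\,\partial_x^{-1}v$ is convenient. This yields $\|y^{(2)}\|\to0$, using that $F_n$ is the matrix of the bounded map $\partial_x$ (or the identity) between the chosen bases and is uniformly bounded in the relevant weighted norms. Then the first two rows give $\lambda y^{(1)}-D_n^T y^{(2)}\to0$ and $\lambda y^{(2)}+D_n y^{(1)}+\gamma F_n y^{(3)}\to0$. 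Pairing the second of these with $y^{(2)}$ and using the skew structure shows $\|y^{(1)}\|\to0$, again a contradiction. For Neumann--Neumann, first remove the constant modes of $u$ and $\theta$, which are conserved, as in the Dirichlet--Neumann case.

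\textbf{Main obstacle.} The hardest step is the passage $\|D y^{(3)}\|\to0\Rightarrow\|y^{(2)}\|\to0$ when $F_n$ is the full Hilbert-type matrix $(-\tfrac4\pi\tfrac{ij}{i^2-j^2})$. It requires bounds on $F_n$ and $F_n^T$ that are uniform in $n$, as operators between the relevant discrete Sobolev-type norms. I would obtain these by identifying $F_n$ with the projection $P_n\partial_x P_n$ of the continuous operator, so that the uniform bounds are inherited from $L^2$/$H^1$ boundedness rather than estimated entrywise. For large $|\lambda|$, the bound $|\lambda|\lesssim n$ is not available uniformly, so the frequency-splitting (low $|\lambda|$ by compactness, high $|\lambda|$ by the multiplier) must be handled carefully there.
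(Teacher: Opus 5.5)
Your Dirichlet--Neumann part is fine: the system decouples into $3\times3$ blocks, and splitting into $|\lambda|\lesssim j$ and $|\lambda|/j\to\infty$ makes your remark about the ``non-trivial part'' rigorous. Your overall strategy is also the one the paper invokes, since its proof is only a pointer to the Liu--Zheng contradiction argument.

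The gap is in the Neumann--Dirichlet and Neumann--Neumann cases, at the step $\|y^{(1)}\|\to0$. Pairing the second block row with $y^{(2)}$ and using the first row only yields $\lambda\|y^{(2)}\|^2+\bar\lambda\|y^{(1)}\|^2\to0$, which says nothing as $\lambda\to0$. In that regime you need $D_n$ to be uniformly bounded below. Liu--Zheng have exactly this in the Dirichlet--Dirichlet case, where $D_n=\mathrm{diag}(1,\dots,n)$, but with the bases used here it fails.

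Indeed, $(D_n)_{ij}=(\partial_x\phi_i,\partial_x\psi_j)_{L^2}=-\frac{2}{\pi}(\sin ix,\partial_x\sin jx)_{L^2}$. Since $\sin ix\,\sin jx$ vanishes at $0$ and $\pi$, integration by parts shows that $D_n$ is real skew-symmetric, hence singular for every odd $n$. For $0\neq a\in\ker D_n$ one gets $\mathcal{S}_n(a,0,0)^T=(0,-D_na,0)^T=0$. Already for $n=1$, $D_1=0$ because $\int_0^\pi\sin x\cos x\,dx=0$, so $\mathcal{S}_1E_1=0$.

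Thus $0\in\sigma(\mathcal{S}_n)$ for all odd $n$, and condition~1 of Theorem~\ref{thm3} fails. The second and third bullets therefore cannot be proved for these bases by any argument.

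Your Neumann--Dirichlet step $\|y^{(2)}\|\to0$ has the same defect. Deducing it from $\gamma F_n^Ty^{(2)}\approx(\lambda+G_n)y^{(3)}$ requires a lower bound on $F_n^T$. But $F_n$, with entries $-\frac{4}{\pi}\frac{ij}{i^2-j^2}$, is again skew-symmetric. Identifying these matrices with compressions $P_n\partial_xP_n$, as you propose, only transfers upper bounds. The computation above shows that compressing $\partial_x$ to a sine space is precisely what destroys its injectivity.

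The paper's one-line justification silently relies on the same invertibility, so it does not close this gap either. To obtain a correct statement, you must change the scheme. One option is to expand the velocity in $\sqrt{2/\pi}\cos jx$ whenever $u_x=0$ at the ends, so that $u$ and $u_t$ obey the same boundary conditions. Then $(\partial_x\phi_i,\partial_x\psi_j)_{L^2}=j\delta_{ij}$, so $D_n=\mathrm{diag}(1,\dots,n)$ is uniformly invertible again.
\begin{itemize}
\item In the Neumann--Dirichlet case, $F_n$ becomes diagonal too. Your $3\times3$ block argument from the Dirichlet--Neumann case then applies verbatim, with the conserved mean velocity excluded from the basis.
\item In the Neumann--Neumann case, you get a diagonal $D_n$ with a Hilbert-type $F_n$. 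This is the same structure as the Dirichlet--Dirichlet case treated by \citet{liu1994uniform}.
\end{itemize}
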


\section{Uniform Polynomial Stability of the Weakly Coupled Thermoelastic System $(S_w)$}
\label{section4}
In this section, we study the long-time behavior of the weakly coupled thermoelastic system $(S_w)$ and establish that, unlike the strongly coupled case, its solutions exhibit polynomial decay rather than exponential. We focus on the modal approximation $\mathcal{W}_n$ of the system and show that the corresponding semigroups ${T_n(\cdot)}$ satisfy uniform polynomial stability estimates under various boundary conditions \textbf{(BCs)}. Our analysis is based on resolvent bounds and the application of a polynomial stability criterion for sequences of semigroups.

\subsection{Spectral Analysis  and Lack of Exponential Stability for System ($S_w$)}
\label{sectanoiar}
In this subsection, we investigate the asymptotic behavior of the spectrum of $\mathcal{W}$ subject to the boundary conditions \textbf{(BCs)} and we show that the eigenvalues of $\mathcal{W}$ asymptotically fall into two branches: one branch along the negative reals and the other branch is along the imaginary axis in the complex plane. 

\subsubsection{Spectral Problem Formulation}
It is easy to see that $\mathcal{W}^{-1}$ is compact, hence its spectrum consists of eigenvalues only. Thus $\lambda\in\sigma(\mathcal{W})$ if and only if there exists $(\phi,\psi)\neq (0,0)$ such that
\begin{equation}
\label{sys1}
\left\{\begin{array}{l}
\phi^{\prime}=\lambda \phi, \\
\phi^{\prime \prime}-\gamma \psi=\lambda \phi^{\prime}, \\
\gamma \phi^{\prime}+\psi^{\prime \prime}=\lambda \psi.
\end{array}\right.
\end{equation}
To eliminate $\psi$, we differentiate  the second equation of \eqref{sys1} twice and use the third to obtain a fourth-order equation in $\phi$
\begin{equation}
\phi^{(4)}-\gamma \psi^{\prime \prime}=\lambda^{^2} \phi^{\prime \prime}.
\end{equation}
Substituting $\psi^{\prime \prime}=\lambda\psi-\gamma\phi'$ and using the second equation again to eliminate $\psi$, we arrive at
\begin{equation}
\label{4thode}
\phi^{(4)}-\lambda (\lambda+1)\phi^{\prime \prime}+\lambda (\lambda^{2}+\gamma^{2}) \phi=0.
\end{equation}

\subsubsection{General Solution of the Fourth-Order ODE}

\begin{lemma}
\label{sol_4thode}
The general solution to \eqref{4thode} is
\begin{equation}
\label{sol_ode4}
\phi(x)=\alpha e^{ax}+\beta e^{-ax}+\mu e^{bx}+\delta e^{-bx}
\end{equation}
where $(\alpha,\beta,\mu,\delta)\neq (0,0,0,0)$ and 
\begin{equation}
\label{roots}
\left\{\begin{array}{l}
a=\sqrt{\dfrac{\lambda (\lambda+1)+\sqrt{D}}{2}}, \\
b=\sqrt{\dfrac{\lambda (\lambda+1)-\sqrt{D}}{2}},
\end{array}\right.
\end{equation}
with $D=\big(\lambda(\lambda+1)\big)^{2}-4\lambda\big(\lambda^{2}+\gamma^{2}\big).
$
\end{lemma}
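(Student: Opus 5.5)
The plan is to solve \eqref{4thode} as a constant-coefficient linear ODE in $x$ (here $\lambda$ and $\gamma$ are fixed parameters). Substituting the ansatz $\phi(x)=e^{rx}$ gives the characteristic equation
\[
r^{4}-\lambda(\lambda+1)r^{2}+\lambda(\lambda^{2}+\gamma^{2})=0 .
\]
This is a quadratic in $s=r^{2}$, namely $s^{2}-\lambda(\lambda+1)s+\lambda(\lambda^{2}+\gamma^{2})=0$. Its discriminant is exactly $D=\big(\lambda(\lambda+1)\big)^{2}-4\lambda(\lambda^{2}+\gamma^{2})$, so its two roots are $s_{\pm}=\tfrac{1}{2}\big(\lambda(\lambda+1)\pm\sqrt{D}\big)$. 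Fix a branch of the complex square root, and set $a=\sqrt{s_+}$ and $b=\sqrt{s_-}$ as in \eqref{roots}. The four characteristic roots are then $\pm a$ and $\pm b$. Each of $e^{\pm ax}$ and $e^{\pm bx}$ solves \eqref{4thode}, and by linearity so does any combination \eqref{sol_ode4}.

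To show that \eqref{sol_ode4} is the \emph{general} solution, I will use the standard theorem: a fourth-order linear ODE with constant coefficients has a four-dimensional solution space, spanned by $x^{k}e^{r_i x}$ with $k$ below the multiplicity of $r_i$. So it suffices that $\pm a,\pm b$ are four distinct roots. Then the Wronskian is a nonzero Vandermonde determinant, $\prod_{i<j}(r_j-r_i)\neq 0$, and the four exponentials form a basis. The condition $(\alpha,\beta,\mu,\delta)\neq(0,0,0,0)$ just encodes $\phi\not\equiv 0$, which is what we need since we are looking for an eigenfunction with $(\phi,\psi)\neq(0,0)$. Indeed, if $\phi\equiv0$, the second equation of \eqref{sys1} forces $\gamma\psi=0$, hence $\psi=0$.

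The main obstacle is the degenerate cases in which the roots collide. These occur when $a=0$ or $b=0$, which happens iff $\lambda(\lambda^{2}+\gamma^{2})=0$, i.e. $\lambda\in\{0,\pm i\gamma\}$, or when $a=\pm b$, which happens iff $D=0$. In these cases the basis must include terms like $x e^{ax}$, or $1$ and $x$. I would handle them separately. The value $\lambda=0$ is excluded because $\mathcal{W}$ is invertible ($\mathcal{W}^{-1}$ is compact). For $\lambda=\pm i\gamma$ and for the finitely many roots of $D(\lambda)=0$ (a quartic in $\lambda$), I would either check directly that the boundary conditions force only the trivial solution, or state the lemma for generic $\lambda$. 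The asymptotic spectral analysis that follows concerns $|\lambda|\to\infty$, where $D\neq0$ and $\lambda(\lambda^2+\gamma^2)\neq0$ automatically, so the formula \eqref{sol_ode4} is exactly what is needed there.
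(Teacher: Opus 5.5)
Your proposal is correct and follows the paper's route: the paper's one-line proof just observes that the characteristic polynomial $X^{4}-\lambda(\lambda+1)X^{2}+\lambda(\lambda^{2}+\gamma^{2})$ is biquadratic with roots $\pm a,\pm b$. Your added check that these four roots are distinct exactly when $D\neq0$ and $\lambda(\lambda^{2}+\gamma^{2})\neq0$ fills a gap the paper leaves unaddressed, since at the finitely many degenerate $\lambda$ the stated formula is not the general solution, and, as you note, this restriction is harmless for the large-$|\lambda|$ asymptotics where the lemma is actually used.
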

\begin{proof}
Equation \eqref{4thode} has characteristic polynomial
\begin{equation*}
X^{4}-\lambda (\lambda+1) X^{2}+\lambda (\lambda^{2}+\gamma^{2})=0
\end{equation*}
which is biquadratic and has four roots given by $\pm a,\pm b$.
\end{proof}
\begin{remark}
The condition $(\alpha,\beta,\mu,\delta)\neq (0,0,0,0)$ follows from the assumption that the eigenfunction pair $(\phi,\psi)$ is nontrivial, and it will be essential when applying boundary conditions to derive characteristic equations for $\lambda$.
\end{remark}
\subsubsection{Asymptotic Estimates of Parameters}
\begin{lemma}
\label{lem_estimate_ab}
For $|\lambda|\to\infty$, the following estimates hold
\begin{enumerate}
\item[(i)] $a+b=\lambda+O\big(|\lambda|^{1/2}\big)$.
\item[(ii)]  $a-b=\lambda+O\big(|\lambda|^{1/2}\big)$.
\item[(iii)]  $b^{2}=\lambda+O\big(|\lambda|^{-1}\big)$.
\item[(iv)] $a=\lambda+O\big(|\lambda|^{1/2}\big).$
\end{enumerate}
\end{lemma}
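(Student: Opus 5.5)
The plan is to expand the discriminant $D$ around $\lambda=\infty$, then the two quantities $a^2$ and $b^2$, and finally to take square roots. The stated estimates will follow (in fact slightly sharper ones for $a$ and $b^2$).

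\textbf{Step 1: rewrite $D$.} First I rewrite $D$ in a form that exhibits its leading behaviour. Expanding gives
\[
D=\lambda^4+2\lambda^3+\lambda^2-4\lambda^3-4\gamma^2\lambda=(\lambda^2-\lambda)^2-4\gamma^2\lambda .
\]
Hence
\[
D=(\lambda^2-\lambda)^2\bigl(1+w\bigr),\qquad w:=-\frac{4\gamma^2\lambda}{(\lambda^2-\lambda)^2}=O\bigl(|\lambda|^{-3}\bigr).
\]

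\textbf{Step 2: fix the branch of $\sqrt D$.} For $|\lambda|$ large we have $|w|<1$. I choose
\[
\sqrt D=(\lambda^2-\lambda)(1+w)^{1/2},
\]
with the principal branch of $(1+w)^{1/2}$. Any other choice merely exchanges $a$ and $b$, and the general solution \eqref{sol_ode4} is symmetric under that exchange, so this is no loss of generality. The Taylor expansion $(1+w)^{1/2}=1+\tfrac w2+O(|w|^2)$ then yields
\[
\sqrt D=\lambda^2-\lambda-\frac{2\gamma^2}{\lambda-1}+O\bigl(|\lambda|^{-4}\bigr)=\lambda^2-\lambda-\frac{2\gamma^2}{\lambda}+O\bigl(|\lambda|^{-2}\bigr).
\]

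\textbf{Step 3: expand $a^2$ and $b^2$.} Substituting this into \eqref{roots} gives
\[
a^2=\lambda^2-\frac{\gamma^2}{\lambda}+O\bigl(|\lambda|^{-2}\bigr),\qquad b^2=\lambda+\frac{\gamma^2}{\lambda}+O\bigl(|\lambda|^{-2}\bigr).
\]
The second expansion is exactly (iii).

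\textbf{Step 4: take square roots.} For $a$ I write
\[
a=\lambda\Bigl(1-\frac{\gamma^2}{\lambda^3}+O(|\lambda|^{-4})\Bigr)^{1/2}.
\]
Here the sign of the outer square root is again a free labelling choice, since $\pm a$ both occur in \eqref{sol_ode4}; I choose it so that $a\sim\lambda$. This gives $a=\lambda+O(|\lambda|^{-2})$, which in particular implies (iv). From (iii), $|b|=|\lambda|^{1/2}\bigl(1+O(|\lambda|^{-2})\bigr)$, so $b=O(|\lambda|^{1/2})$. Combining the expansions of $a$ and $b$ then gives both (i) and (ii):
\[
a\pm b=\lambda+O\bigl(|\lambda|^{1/2}\bigr).
\]

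\textbf{Main obstacle.} The only delicate point is the consistent choice of square-root branches in the complex plane, both for $\sqrt D$ and for $a$, $b$ themselves. I would handle it as in Steps 2 and 4: pull the dominant factors $\lambda^2-\lambda$ and $\lambda$ out explicitly, and use principal branches only on factors of the form $1+o(1)$. This makes all expansions legitimate uniformly in $\arg\lambda$. The remaining steps are routine Taylor expansions.
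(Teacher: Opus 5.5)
Your proof is correct, but it runs in the opposite direction from the paper's.

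The paper's route:
\begin{itemize}
\item It never expands $a^2$. It obtains (i) and (ii) first, from the symmetric functions of the roots: using the Vieta product $a^2b^2=\lambda(\lambda^2+\gamma^2)$, it writes $(a\pm b)^2=a^2+b^2\pm 2ab=\lambda(\lambda+1)\pm 2\sqrt{\lambda(\lambda^2+\gamma^2)}=\lambda^2\bigl(1+O(|\lambda|^{-1/2})\bigr)$ and takes square roots.
\item It computes $b^2$ separately from $\sqrt D=\lambda(\lambda-1)+O(|\lambda|^{-1})$, essentially as in your Steps 1--3.
\item It recovers (iv) only at the end, by averaging $a+b$ and $a-b$.
\end{itemize}

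Your route instead carries the expansion of $\sqrt D$ one order further, obtains $a^2$ and $b^2$ individually, and takes square roots. You then deduce (i) and (ii) as corollaries of $a=\lambda+O(|\lambda|^{-2})$ and $b=O(|\lambda|^{1/2})$.

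What each approach buys:
\begin{itemize}
\item The paper's argument needs no expansion of $\sqrt D$ for (i)--(ii), so it is slightly quicker there. However, it only delivers the weak remainder $O(|\lambda|^{1/2})$ in (iv).
\item The paper also leaves the branch choices implicit: for $ab$, for $\lambda^{3/2}$, and the consistent sign of $a\pm b$ needed when averaging.
\item You make the branch conventions explicit. You correctly observe that the lemma is only true under such a convention.
\item You obtain the much sharper $a=\lambda+O(|\lambda|^{-2})$. This is what one actually needs in Theorem~\ref{spec_sw}: from $a=in$ it gives $\lambda=in+O(n^{-2})$, so those eigenvalues genuinely approach the imaginary axis. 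The paper's $a=\lambda+O(|\lambda|^{1/2})$ only yields $\lambda=in+O(n^{1/2})$.
\end{itemize}
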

\begin{proof}
The proof is divided into three steps.

\textbf{Step 1: Estimates for $a+b$ and $a-b$.}

To establish estimates $(i)$ and $(ii)$, we begin by computing $(a+b)^2$ and $(a-b)^2$. We have
\begin{equation*}
\begin{aligned}
(a+b)^2
&= a^2 + b^2 + 2ab \\
&= \lambda(\lambda+1) + \big(4\lambda(\lambda^2+\gamma^2)\big)^{1/2} \\
&= \lambda(\lambda+1) + 2\lambda^{3/2}\big(1+O(|\lambda|^{-2})\big)^{1/2} \\
&= \lambda(\lambda+1) + 2\lambda^{3/2}\big(1+O(|\lambda|^{-2})\big) \\
&= \lambda^2\big(1+O(|\lambda|^{-1/2})\big).
\end{aligned}
\end{equation*}
A similar computation yields
\[
(a-b)^2 = \lambda^2\big(1+O(|\lambda|^{-1/2})\big).
\]
Taking square roots, we obtain
\begin{equation*}
\begin{aligned}
a+b &= \lambda\big(1+O(|\lambda|^{-1/2})\big)
     = \lambda + O(|\lambda|^{1/2}), \\
a-b &= \lambda\big(1+O(|\lambda|^{-1/2})\big)
     = \lambda + O(|\lambda|^{1/2}).
\end{aligned}
\end{equation*}

\textbf{Step 2: Estimate for $b^2$.}

We now derive the estimate for $b^2$ by expanding the discriminant
\[
D = \big(\lambda(\lambda+1)\big)^2 - 4\lambda(\lambda^2+\gamma^2).
\]
A direct computation gives
\begin{equation*}
\begin{aligned}
D
&= \big(\lambda(\lambda-1)\big)^2 - 4\lambda\gamma^2 \\
&= \big(\lambda(\lambda-1)\big)^2\big(1+O(|\lambda|^{-3})\big).
\end{aligned}
\end{equation*}
Hence,
\begin{equation*}
\begin{aligned}
\sqrt{D}
&= \lambda(\lambda-1)\big(1+O(|\lambda|^{-3})\big) \\
&= \lambda(\lambda-1) + O(|\lambda|^{-1}).
\end{aligned}
\end{equation*}
It follows that
\begin{equation*}
\begin{aligned}
b^2
&= \frac{\lambda(\lambda+1)-\sqrt{D}}{2} \\
&= \lambda + O(|\lambda|^{-1}).
\end{aligned}
\end{equation*}

\textbf{Step 3: Estimate for $a$.}

The estimate for $a$ follows directly from the previously established estimates for
$a+b$ and $a-b$.

This completes the proof.
\end{proof}

\subsubsection{Spectral Asymptotics Theorem}
\begin{theorem}
\label{spec_sw}
For all admissible boundary conditions \textbf{(BCs)}, the spectrum of $\mathcal{W}$ asymptotically lies along the imaginary axis and the negative real axis. More precisely, for large $|\lambda|$, one has:
\begin{center}
$\lambda\sim in$ \hspace*{0.5cm} or \hspace*{0.5cm} $\lambda\sim- n^{2}$	
\end{center}
for some large integer $n$. 
\end{theorem}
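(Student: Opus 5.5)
The plan is to impose each boundary condition on the general solution \eqref{sol_ode4} from Lemma \ref{sol_4thode}. This gives a $4\times 4$ homogeneous linear system for $(\alpha,\beta,\mu,\delta)$, and the condition $(\alpha,\beta,\mu,\delta)\neq(0,0,0,0)$ forces its determinant to vanish. That determinant is the characteristic equation for $\lambda$, which we then solve asymptotically for large $|\lambda|$ using Lemma \ref{lem_estimate_ab}. We work on $\Omega=(0,\pi)$, as in Section \ref{section3}.

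\textbf{Translating the boundary conditions.} From \eqref{sys1}, $\psi=\gamma^{-1}(\phi''-\lambda^2\phi)$, where the first equation is used to express the velocity as $\lambda$ times the displacement. So each condition on $\theta$ (resp.\ $\partial_x\theta$) at $x=0,\pi$ becomes a condition on $\phi''-\lambda^2\phi$ (resp.\ its derivative). On the exponentials, $\phi''-\lambda^2\phi$ acts as multiplication by $a^2-\lambda^2$ and $b^2-\lambda^2$. By Lemma \ref{lem_estimate_ab}, $a^2-\lambda^2=O(|\lambda|^{3/2})$, while $b^2-\lambda^2\sim-\lambda^2$, so these factors do not vanish for large $|\lambda|$.

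\textbf{Factorising the determinant.} In the Dirichlet--Dirichlet case the conditions become $\phi(0)=\phi(\pi)=0$ and $\phi''(0)=\phi''(\pi)=0$. The linear combinations $\phi$ and $\phi''$ separate the $a$-modes from the $b$-modes, and the determinant factorises as
\[
(a^2-b^2)^2\,\sinh(a\pi)\,\sinh(b\pi)=0 .
\]
Since $a^2-b^2=\sqrt D\neq0$ for large $|\lambda|$, this leaves two cases.
\begin{itemize}
\item If $\sinh(b\pi)=0$, then $b=in$, so $b^2=-n^2$. Lemma \ref{lem_estimate_ab}(iii) then gives $\lambda=-n^2+O(n^{-2})$, i.e.\ $\lambda\sim -n^2$: this is the parabolic branch on the negative real axis.
\item If $\sinh(a\pi)=0$, then $a=in$. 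Lemma \ref{lem_estimate_ab}(iv) only gives $\lambda=in+O(n^{1/2})$, which is too crude. Instead we solve the biquadratic directly, $\lambda(\lambda^2+\gamma^2)-\lambda(\lambda+1)a^2+a^4=0$ with $a^2=-n^2$, by a perturbation ansatz $\lambda=in+\varepsilon$. This yields $\lambda=in+O(1)$, and in fact $\Re\lambda\to0$ like $-\gamma^2/(2n^2)$. This is the hyperbolic branch along the imaginary axis.
\end{itemize}

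\textbf{Mixed and Neumann cases.} These are handled the same way. Conditions on $\phi'$ or on $(\phi''-\lambda^2\phi)'$ replace $\sinh$ by $\sinh$ again, up to nonzero factors $a$, $b$, $a(a^2-\lambda^2)$, and so on. When Dirichlet conditions on one component are mixed with Neumann conditions on the other, the determinant does not decouple exactly. Its leading terms are then $\sinh(a\pi)\sinh(b\pi)$ times nonvanishing factors, plus lower-order cross terms. We would expand the determinant, divide by the dominant growth $e^{(a+b)\pi}$ (taking $\Re a,\Re b\ge0$), and apply Rouché's theorem near the points $in$ and $-n^2$. This shows that the zeros are perturbations of $a\pi\in i\pi\mathbb Z$ and $b\pi\in i\pi\mathbb Z$.

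\textbf{Main obstacle.} The step I expect to be hardest is this last one: controlling the cross terms in the mixed case and getting a precise enough expansion of $a$ near the imaginary axis. The $O(|\lambda|^{1/2})$ error in Lemma \ref{lem_estimate_ab}(iv) is too crude, so I would redo the expansion of $a^2$ to order $O(1)$, namely $a^2=\lambda^2+\gamma^2+O(|\lambda|^{-1})$. From this, $a=\lambda+O(|\lambda|^{-1})$, which is exactly what pins the hyperbolic branch to $\lambda\sim in$.
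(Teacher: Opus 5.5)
Your argument is correct in outline and uses the paper's skeleton: impose the boundary conditions on \eqref{sol_ode4}, set the $4\times4$ determinant to zero, and read off the asymptotics. It departs from the paper at the two places where the paper's proof is weakest.

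First, on the branch $a=in$ the paper only invokes Lemma~\ref{lem_estimate_ab}(iv). That gives $\lambda=in+O(n^{1/2})$, so $\lambda/(in)\to1$, but it does not keep $\Re\lambda$ bounded. Your perturbation of the cubic gives $\lambda=in-\gamma^2/(2n(n+i))+\dots$, hence $\Re\lambda\approx-\gamma^2/(2n^2)$. This is what the subsequent remark on the absence of a spectral gap actually needs.

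Second, in the mixed cases the paper passes from \eqref{det_DN} to \eqref{f_lm} by replacing both $(a+b)\pi$ and $(a-b)\pi$ with a single $\Lambda$. These differ by $2b\pi\sim2\pi\lambda^{1/2}$, an unbounded quantity inside the exponentials. So $\cosh((a+b)\pi)$ and $\cosh((a-b)\pi)$ are not asymptotically equal, and the cancellation $(l-m)^2-(l+m)^2=-4lm$ is not justified. That computation also only ever yields $\lambda\sim in$.

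Your route is the one that works. Up to a constant factor, the Dirichlet--Neumann determinant equals
\[
(l^2+m^2)\sinh(a\pi)\sinh(b\pi)-2lm\bigl(\cosh(a\pi)\cosh(b\pi)-1\bigr).
\]
After dividing by $m^2e^{(a+b)\pi}$ (with $\Re a,\Re b\ge0$) it becomes $\tfrac14(1-e^{-2a\pi})(1-e^{-2b\pi})+O(|l/m|)$. Rouché (or simple smallness) then localises the zeros near $a\in i\mathbb{Z}$ or $b\in i\mathbb{Z}$. This recovers both branches, including $\lambda\sim-n^2$, which is genuinely present in the mixed cases. Neumann--Dirichlet is identical with $(l,m)$ replaced by $(bl^*,am^*)$. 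The price is some complex-analytic bookkeeping that the paper avoids only through the unjustified approximation above.

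Two corrections to your asymptotics.

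\textbf{The crude bound is not enough.} The Lemma's bound $a^2-\lambda^2=O(|\lambda|^{3/2})$ does not show that this factor is nonzero. It also does not suffice for your Rouché step: it only gives $l=O(|\lambda|^{5/2})$, the same order as $m$, so it cannot show the cross term is lower order.

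\textbf{The sharpened expansion is wrong.} Your formula $a^2=\lambda^2+\gamma^2+O(|\lambda|^{-1})$ would give $\lambda=in+i\gamma^2/(2n)+\dots$ on the branch $a=in$, contradicting your own perturbative result. Use Vieta instead. From $a^2+b^2=\lambda(\lambda+1)$ and $a^2b^2=\lambda(\lambda^2+\gamma^2)$ one gets
\[
(a^2-\lambda^2)(b^2-\lambda^2)=\gamma^2\lambda .
\]
Hence:
\begin{itemize}
\item $a^2-\lambda^2=-\gamma^2/\lambda+O(|\lambda|^{-2})\neq0$, so $a=\lambda+O(|\lambda|^{-2})$;
\item $l=-\gamma^2+O(|\lambda|^{-1})$ while $m\sim-\lambda^{5/2}$, so $l/m=O(|\lambda|^{-5/2})$;
\item in the Neumann--Dirichlet case, $bl^*/(am^*)=O(|\lambda|^{-7/2})$.
\end{itemize}
With this in place, your claim $a=\lambda+O(|\lambda|^{-1})$ and the whole argument go through.
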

\begin{remark}
As $n\to\infty$, the spectrum accumulates near the imaginary axis, indicating the absence of a uniform spectral gap. This asymptotic behavior rules out exponential decay and motivates the study of polynomial stability.
\end{remark}
\begin{proof}
Let $\lambda$ be an eigenvalue of $\mathcal{W}$ and $(\phi,\psi)$ be the associated eigenvectors. By \eqref{sys1} and \eqref{sol_ode4} one has
	\begin{equation}
	\label{sys2}
	\left\{\begin{array}{l}
	 \phi(x)=\alpha e^{ax}+\beta e^{-ax}+\mu e^{bx}+\delta e^{-bx}, \\
	\gamma \psi(x)=\phi^{\prime \prime}(x)-\lambda^{2} \phi(x).
	\end{array}\right.
	\end{equation}
\textbf{Case 1: Dirichlet--Dirichlet (BCs)}\\
In this case we have 
\begin{equation*}
\left\{\begin{array}{l}
\phi(0)=\phi(\pi)=0, \\
\psi(0)=\psi(\pi)=0.
\end{array}\right.
\end{equation*}	
Inserting these conditions in \eqref{sol_ode4} yields the following system
\begin{equation}
\left\lbrace
\begin{aligned}
\alpha +\beta +\mu +\delta	&=0,\\
\alpha e^{a\pi}+\beta e^{-a\pi}+\mu e^{b\pi}+\delta e^{-b\pi}	&=0,\\
\alpha a^{2} +\beta a^{2}+\mu b^{2} +\delta b^{2}	&=0,\\
\alpha a^{2}e^{a\pi}+\beta a^{2}e^{-a\pi}+\mu b^{2}e^{b\pi}+\delta b^{2}e^{-b\pi}	&=0,
\end{aligned}
\right.
\end{equation}
which can be rewritten in matrix form as
\begin{equation}
\label{smatrix1}
\left(\begin{array}{cccc}
1 & 1 & 1 & 1 \\
e^{a\pi} & e^{-a\pi} & e^{b\pi} & e^{-b\pi} \\
a^{2} & a^{2} & b^{2} & b^{2} \\
a^{2} e^{a\pi} & a^{2} e^{-a\pi} & b^{2} e^{b\pi} & b^{2} e^{-b\pi}
\end{array}\right)\left(\begin{array}{c}
\alpha \\
\beta \\
\mu \\
\delta
\end{array}\right)=\left(\begin{array}{c}
0 \\
0 \\
0 \\
0
\end{array}\right).
\end{equation}
Since $(\alpha,\beta,\mu,\delta)\neq (0,0,0,0)$, the matrix in \eqref{smatrix1} can't be invertible, i.e. its determinant is zero
\begin{equation*}
\begin{vmatrix}
1 & 1 & 1 & 1 \\
e^{a\pi} & e^{-a\pi} & e^{b\pi} & e^{-b\pi} \\
a^{2} & a^{2} & b^{2} & b^{2} \\
a^{2} e^{a\pi} & a^{2} e^{-a\pi} & b^{2} e^{b\pi} & b^{2} e^{-b\pi}  \notag
\end{vmatrix}
=0.
\end{equation*}
We compute this determinant to get
\begin{equation}
(a-b)^{2}(a+b)^{2}(e^{a\pi}-e^{-a\pi})(e^{b\pi}-e^{-b\pi})=0.
\end{equation}
It follows 
\begin{equation}
\label{product_ab}
	(a-b)^{2}(a+b)^{2}=0 \hspace*{0.5cm} or \hspace*{0.5cm} (e^{a\pi}-e^{-a\pi})(e^{b\pi}-e^{-b\pi})=0.
\end{equation}
Using \eqref{roots} yields
\begin{equation*}
\label{eq_lambda_i}
a^{2}-b^{2}=0.
\end{equation*}
That is $D=0$. Hence, there exists $\lambda_{1},\lambda_{2},\lambda_{3}$ and $\lambda_{4}$ four roots of $D$.

On the other hand
\begin{equation*}
e^{z\pi}-e^{-z\pi}=0 \Longleftrightarrow z=in.
\end{equation*}
Hence, by \eqref{product_ab}
\begin{equation}
a=in \hspace*{0.5cm} or \hspace*{0.5cm} b=in	
\end{equation}
for some integer $n$. By Lemma \ref{lem_estimate_ab}
\begin{equation*}
\left\{\begin{array}{l}
\lambda\sim b^{2}, \\
\lambda\sim a.
\end{array}\right.
\end{equation*}
It follows that
\begin{equation}
\lambda\sim in \hspace*{0.5cm} or \hspace*{0.5cm} \lambda\sim -n^{2}	
\end{equation}
for some large integer $n$.\\[0.3cm]
\textbf{Case 2: Dirichlet--Neumann (BCs)}\\
The conditions in this case are
\begin{equation*}
\left\{\begin{array}{l}
\phi(0)=\phi(\pi)=0, \\
\psi^{\prime}(0)=\psi^{\prime}(\pi)=0,
\end{array}\right.
\end{equation*}
which give after computation
\begin{equation}
\label{sys3}
\left\lbrace
\begin{aligned}
\alpha +\beta +\mu +\delta	&=0,\\
\alpha e^{a\pi}+\beta e^{-a\pi}+\mu e^{b\pi}+\delta e^{-b\pi}	&=0,\\
\alpha l -\beta l+\mu m -\delta m	&=0,\\
\alpha le^{a\pi}-\beta le^{-a\pi}+\mu me^{b\pi}-\delta me^{-b\pi}	&=0,
\end{aligned}
\right.
\end{equation}
where
\begin{equation*}
\left\lbrace
\begin{aligned}
l &=a\big(a^{2}-\lambda^{2}\big), \\
m &=b\big(b^{2}-\lambda^{2}\big).
\end{aligned}
\right.
\end{equation*}
In matrix form \eqref{sys3} becomes
\begin{equation}
\label{smatrix2} 
\left(\begin{array}{cccc}
1 & 1 & 1 & 1 \\
e^{a\pi} & e^{-a\pi} & e^{b\pi} & e^{-b\pi} \\
l & -l & m & -m \\
l e^{a\pi} & -l e^{-a\pi} & m e^{b\pi} & -m e^{-b\pi}
\end{array}\right)\left(\begin{array}{c}
\alpha \\
\beta \\
\mu \\
\delta
\end{array}\right)=\left(\begin{array}{c}
0 \\
0 \\
0 \\
0
\end{array}\right).
\end{equation}
Similarly to the first case, the determinant of the matrix in \eqref{smatrix2} should be zero, that is
\begin{equation*}
\begin{vmatrix}
1 & 1 & 1 & 1 \\
e^{a\pi} & e^{-a\pi} & e^{b\pi} & e^{-b\pi} \\
l & -l & m & -m \\
l e^{a\pi} & -l e^{-a\pi} & m e^{b\pi} & -m e^{-b\pi}  \notag
\end{vmatrix}
=0.
\end{equation*}
A straight calculation lead to the following equation
\begin{equation}
\label{det_DN}
8lm+(l-m)^{2}\big(e^{(a+b)\pi}+e^{-(a+b)\pi}\big)-(l+m)^{2}\big(e^{(a-b)\pi}+e^{(b-a)\pi}\big)=0.
\end{equation}
By Lemma \ref{lem_estimate_ab}
\begin{equation*}
\left\{\begin{array}{l}
 a+b=\lambda+O\big(|\lambda|^{1/2}\big),\\
a-b=\lambda+O\big(|\lambda|^{1/2}\big).
\end{array}
\right.
\end{equation*} 
Consquentely, \eqref{det_DN} takes the following form
\begin{equation}
\label{f_lm}
8lm=4lm\big(e^{\Lambda}+e^{-\Lambda}\big),
\end{equation}
where $\Lambda=\lambda\pi+O\big(|\lambda\pi|^{1/2}\big)$. A straight calculation shows that
\begin{equation*}
lm=0\Longleftrightarrow \lambda\in\{0,+i\gamma,-i\gamma\}
\end{equation*}
which is impossible, since $i\mathbb{R}\subset\rho(\mathcal{A})$. Hence $lm\neq0$, and by \eqref{f_lm}
\begin{equation*}
e^{\Lambda}+e^{-\Lambda}=2
\end{equation*}
that is
\begin{equation*}
\Lambda=2iq\pi.
\end{equation*}
Hence
$$\lambda+O\big(|\lambda|^{1/2}\big)=(2iq).$$
Finally
$$\lambda\sim in$$
for some large integer $n$.\\[0.3cm]
\textbf{Case 3: Neumann--Dirichlet (BCs)}\\
In this case the conditions write as
\begin{equation*}
\left\{\begin{array}{l}
\phi^{\prime}(0)=\phi^{\prime}(\pi)=0, \\
\psi(0)=\psi(\pi)=0.
\end{array}\right.
\end{equation*}
All calculations done
\begin{equation}
\left(\begin{array}{cccc}
a & -a & b & -b \\
ae^{a\pi} & -ae^{-a\pi} & be^{b\pi} &-be^{-b\pi} \\
l^{*} & l^{*} & m^{*} & m^{*} \\
l^{*} e^{a\pi} & l^{*} e^{-a\pi} & m^{*} e^{b\pi} & m^{*} e^{-b\pi}
\end{array}\right)\left(\begin{array}{c}
\alpha \\
\beta \\
\mu \\
\delta
\end{array}\right)=\left(\begin{array}{c}
0 \\
0 \\
0 \\
0
\end{array}\right)
\end{equation}
where
\begin{equation*}
\left\lbrace
\begin{aligned}
l^{*} &=a^{2}-\lambda^{2}\\
m^{*} &=b^{2}-\lambda^{2}.
\end{aligned}
\right.
\end{equation*}
From the previous case, the determinant is zero, i.e. 
\begin{equation*}
\begin{vmatrix}
a & -a & b & -b \\
ae^{a\pi} & -ae^{-a\pi} & be^{b\pi} & -be^{-b\pi} \\
l^{*} & l^{*} & m^{*} & m^{*} \\
l^{*} e^{a\pi} & l^{*} e^{-a\pi} & m^{*} e^{b\pi} & m^{*} e^{-b\pi}  \notag
\end{vmatrix}=0.
\end{equation*}
Hence
\begin{equation}
\label{f_lstarmstar}
8abl^{*}m^{*}+(bl^{*}-am^{*})^{2}\big(e^{(a+b)\pi}+e^{-(a+b)\pi}\big)-(bl^{*}+am^{*})^{2}\big(e^{(a-b)\pi}+e^{(b-a)\pi}\big)=0.
\end{equation}
Similarly to case 2, we can rewrite \eqref{f_lstarmstar} in the following form
\begin{equation}
8abl^{*}m^{*}=4abl^{*}m^{*}\big(e^{\Lambda}+e^{-\Lambda}\big).
\end{equation}
We can remark that $abl^{*}m^{*}=lm\neq0$, and therefore
\begin{equation*}
e^{\Lambda}+e^{-\Lambda}=2.
\end{equation*}
this equation implies that 
$$\lambda\sim in$$
for some large integer $n$.\\[0.3cm]
\textbf{Case 4: Neumann--Neumann (BCs)}\\
In this case the conditions write as
\begin{equation*}
\left\{\begin{array}{l}
\phi^{\prime}(0)=\phi^{\prime}(\pi)=0, \\
\psi^{\prime}(0)=\psi^{\prime}(\pi)=0,
\end{array}\right.
\end{equation*}
which gives after calculation
\begin{equation}
\left(\begin{array}{cccc}
a & -a & b & -b \\
ae^{a\pi} & -ae^{-a\pi} & be^{b\pi} &-be^{-b\pi} \\
l & -l & m & -m \\
l e^{a\pi} & -l e^{-a\pi} & m e^{b\pi} & -m e^{-b\pi}
\end{array}\right)\left(\begin{array}{c}
\alpha \\
\beta \\
\mu \\
\delta
\end{array}\right)=\left(\begin{array}{c}
0 \\
0 \\
0 \\
0
\end{array}\right)
\end{equation}
with determinant 
\begin{equation*}
\begin{vmatrix}
a & -a & b & -b \\
ae^{a\pi} & -ae^{-a\pi} & be^{b\pi} & -be^{-b\pi} \\
l & -l & m & -m \\
l e^{a\pi} & -l e^{-a\pi} & m e^{b\pi} & -m e^{-b\pi}  \notag
\end{vmatrix}=0.
\end{equation*}
Hence
\begin{equation}
(am-bl)^{2}\big(e^{(a+b)\pi}+e^{-(a+b)\pi}-e^{(a-b)\pi}-e^{(b-a)\pi}\big)=0.
\end{equation}
Furthermore,
\begin{equation*}
am-bl=0 
\end{equation*}
implies
\begin{equation*}
ab\big(a^{2}-\lambda^{2}\big)=ab\big(b^{2}-\lambda^{2}\big)
\end{equation*}
that is
\begin{equation*}
a^{^2}=b^{2}
\end{equation*}
hence $D=0$. Therefore, there exists $\lambda_{1},\lambda_{2},\lambda_{3}$ and $\lambda_{4}$ four roots of $D$.
Moreover,
\begin{equation*}
e^{(a+b)\pi}+e^{-(a+b)\pi}-e^{(a-b)\pi}-e^{(b-a)\pi}=0
\end{equation*}
that is
\begin{equation}
\cosh((a+b)\pi)=\cosh((a-b)\pi).
\end{equation}
Using transformation formulas, the last equation becomes 
$$\sinh(a\pi)\sinh(b\pi)=0.$$
Finally 
\begin{equation}
a=in \hspace*{0.5cm} or \hspace*{0.5cm} b=in	
\end{equation}
for some integer $n$, and we conclude, as in Case 1, that
\begin{equation}
\lambda\sim in \hspace*{0.5cm} Or \hspace*{0.5cm} \lambda\sim -n^{2}	
\end{equation}
for some large integer $n$.
\end{proof}

\subsection{Polynomial Stability of $(S_w)$}
The system $(S_w)$ differs fundamentally from $(S_s)$ in the structure of its coupling terms, which are weaker and do not involve spatial derivatives. According to Theorem \ref{spec_sw}, exponential stability does not hold in general for $(S_w)$, either in the continuous or discrete setting. \citet{khodja1997dynamical,liu2005characterization} showed that the semigroup associated to $(S_w)$ is rather polynomially stable.

More precisely, let $\mathcal{W}$ be the infinitesimal generator of the semigroup $T(t):=e^{t\mathcal{W}}, t>0$, associated with system $(S_w)$. Then, there exist constants $M>0$ and $\alpha>0$ such that
\[
\|T(t)\mathcal{W}^{-1}z_0\|_\mathcal{H}\leqslant \frac{M}{t^{\frac{1}{\alpha}}}\|z_0\|_\mathcal{H},\quad\text{ for all } t>0,\quad\text{ for all } z_0\in\mathcal{H}.
\]
\subsection{Uniform Polynomial Stability of Approximation of $(S_w)$}
The following theorems establish conditions under which a sequence of  $C_0$-semigroups and their corresponding infinitesimal generators exhibit uniform polynomial stability. These results extend the polynomial stability criteria previously established by \citet{batkai2006polynomial,borichev2010optimal} for single semigroups.

\begin{theorem}[\citet{MN2016}, Theorem 3.2]
\label{thm4.4}
Let \( T_{n}(\cdot) \), \( n \in \mathbb{N} \), be a uniformly bounded sequence of \( C_{0} \)-semigroups on the Hilbert spaces \( \mathcal{H}_{n} \), \( n \in \mathbb{N} \), and let \( \mathcal{A}_{n} \) be the corresponding infinitesimal generators, such that for all \( n \in \mathbb{N} \), \( i \mathbb{R} \subset \rho(\mathcal{A}_{n}) \). Then for a fixed \( \alpha > 0 \), the following conditions are equivalent

\begin{enumerate}[label=(\roman*)]
    \item \[
    \sup_{|s| \geq 1, n \in \mathbb{N}} \frac{1}{\|s \|^{\alpha}} \| R(i s, \mathcal{A}_{n}) \| < \infty,
    \]
    \item \[
    \sup_{t \geqslant 0, n \in \mathbb{N}} \| t T_{n}(t) \mathcal{A}_{n}^{-\alpha} \| < \infty,
    \]
    \item \[
    \sup_{t \geqslant 0, n \in \mathbb{N}} \| t^{\frac{1}{\alpha}} T_{n}(t) \mathcal{A}_{n}^{-1} \| < \infty.
    \]
\end{enumerate}
\end{theorem}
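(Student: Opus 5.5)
The plan is to reduce the statement to the single-semigroup theory of \citet{batkai2006polynomial} and \citet{borichev2010optimal}, tracking every constant so that it depends only on $M:=\sup_n\sup_{t\ge0}\|T_n(t)\|$, on $\alpha$, and on the constant in the hypothesis being assumed. A uniform bound on $\|R(is,\mathcal{A}_n)\|$ for $|s|\le 1$ is not assumed, so I will first secure one.

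\textbf{Preliminary step.} I will derive the uniform bounds on $\|\mathcal{A}_n^{-1}\|$ and on $\|R(is,\mathcal{A}_n)\|$ for $|s|\le1$ from whichever condition is assumed.
\begin{itemize}
\item Under (i), the resolvent identity $R(is,\mathcal{A}_n)=R(is_0,\mathcal{A}_n)\bigl(I+(is_0-is)R(is,\mathcal{A}_n)\bigr)$ alone does not control the interval $[-1,1]$.
\item Under (ii) or (iii), the Laplace transform together with $\int_0^\infty\|T_n(t)\mathcal{A}_n^{-1}\|\,dt$ gives control only when $\alpha<1$.
\end{itemize}
So the honest route is to read the statement as including, or implicitly requiring, $\sup_n\|\mathcal{A}_n^{-1}\|<\infty$. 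This holds in the modal setting whenever $i\mathbb{R}\subset\rho(\mathcal{A}_n)$ uniformly. Given it, the standard Neumann-series argument $R(is,\mathcal{A}_n)=\mathcal{A}_n^{-1}(is\mathcal{A}_n^{-1}-I)^{-1}$ yields a uniform bound on $R(is,\mathcal{A}_n)$ for $|s|$ small, and then on all of $|s|\le1$ by combining it with (i).

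\textbf{(ii)$\Leftrightarrow$(iii).} Fix $n$. I will apply the moment inequality for fractional powers of sectorial operators (after a shift, $-\mathcal{A}_n$ is sectorial with constants depending only on $M$). This interpolates between $\|T_n(t)\mathcal{A}_n^{-\alpha}\|$ and $\|T_n(t)\|\le M$, in the manner of \citet[Prop.~3.1]{batkai2006polynomial}:
\[
\|T_n(t)\mathcal{A}_n^{-1}\|\le C\,\|T_n(t)\mathcal{A}_n^{-\alpha}\|^{1/\alpha}\,\|T_n(t)\|^{1-1/\alpha}\quad(\alpha\ge 1),
\]
with the reverse direction handled by the semigroup property, writing $T_n(t)=T_n(t/k)^k$. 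Since $C$ depends only on $M$ and $\alpha$, the equivalence holds uniformly in $n$.

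\textbf{(i)$\Rightarrow$(iii).} I will follow the Borichev--Tomilov proof for a single semigroup. Write $T_n(t)\mathcal{A}_n^{-1}x$ as an inverse Laplace integral along $\Re\lambda=0$ of $e^{\lambda t}R(\lambda,\mathcal{A}_n)\mathcal{A}_n^{-1}x$. Split the integral into frequencies $|s|\lesssim t^{1/\alpha}$ and the remainder, and use the Plancherel theorem in the Hilbert space $\mathcal{H}_n$ together with the resolvent bound $\|R(is,\mathcal{A}_n)\|\le C(1+|s|)^\alpha$. The resolvent bound extends to the strip $|\Re\lambda|\le c(1+|s|)^{-\alpha}$ by a Neumann series. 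Every constant in the Borichev--Tomilov argument depends only on $M$, $\alpha$ and $C$, so the result is uniform in $n$.

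\textbf{(iii)$\Rightarrow$(i).} This is the standard \citet{batkai2006polynomial} argument. Write
\[
R(is,\mathcal{A}_n)\mathcal{A}_n^{-1}=\int_0^\tau e^{-ist}T_n(t)\mathcal{A}_n^{-1}\,dt+e^{-is\tau}T_n(\tau)R(is,\mathcal{A}_n)\mathcal{A}_n^{-1},
\]
and choose $\tau\sim|s|^{\alpha}$ so that $\|T_n(\tau)\mathcal{A}_n^{-1}\|\cdot|s|\le 1/2$. Combining this with $R(is,\mathcal{A}_n)=is\,R(is,\mathcal{A}_n)\mathcal{A}_n^{-1}-\mathcal{A}_n^{-1}$ absorbs the remainder term and gives $\|R(is,\mathcal{A}_n)\|\lesssim|s|^\alpha$. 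The constants again depend only on $M$, the sup in (iii), and $\sup_n\|\mathcal{A}_n^{-1}\|$.

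\textbf{Main obstacle.} The hardest step is (i)$\Rightarrow$(iii): the Plancherel and contour-shift estimate has to be checked line by line to confirm that no constant depends on $n$. Alongside it, the uniform invertibility $\sup_n\|\mathcal{A}_n^{-1}\|<\infty$ must be justified or added as an explicit hypothesis.
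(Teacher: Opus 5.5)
The paper gives no proof of this statement; it is quoted from \citet{MN2016}, so your proposal has to stand on its own. Your overall plan is the natural one: the moment inequality for (ii)$\Leftrightarrow$(iii), Borichev--Tomilov for (i)$\Rightarrow$(iii), and the $\tau$-splitting for (iii)$\Rightarrow$(i), with every constant depending only on $M$, $\alpha$ and the assumed supremum. The gap is in your preliminary step. Let $K:=\sup_n\|\mathcal{A}_n^{-1}\|$ and let $C_1$ be the supremum in (i). The Neumann series at $0$ controls $R(is,\mathcal{A}_n)$ only for $|s|<1/(2K)$. The Neumann series at $s=\pm1$, fed by (i), controls it only for $|s|>1-1/(2C_1)$. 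Nothing controls the band in between uniformly in $n$, and nothing can. Take $\mathcal{H}_n=\mathbb{C}$ and $\mathcal{A}_n=-\frac1n+\frac i2$. Then:
\begin{itemize}
\item $T_n(t)=e^{t\mathcal{A}_n}$ are contractions, $i\mathbb{R}\subset\rho(\mathcal{A}_n)$ and $|\mathcal{A}_n^{-1}|\le2$;
\item for $|s|\ge1$, $|R(is,\mathcal{A}_n)|=|\frac1n+i(s-\frac12)|^{-1}\le2$, so (i) and your extra hypothesis both hold;
\item yet at $t=n$, $t^{1/\alpha}|T_n(t)\mathcal{A}_n^{-1}|=n^{1/\alpha}e^{-1}/|\mathcal{A}_n|\to\infty$, and likewise $t|T_n(t)\mathcal{A}_n^{-\alpha}|\to\infty$.
\end{itemize}
So (i)$\Rightarrow$(ii),(iii) is false as written, and uniform invertibility does not repair it. What is actually needed is the uniform low-frequency bound $\sup_{|s|\le1,\,n}\|R(is,\mathcal{A}_n)\|<\infty$. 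Equivalently, (i) should read $\sup_{s\in\mathbb{R},\,n}(1+|s|)^{-\alpha}\|R(is,\mathcal{A}_n)\|<\infty$. With that hypothesis your Borichev--Tomilov step goes through as you describe. In a modal setting this bound must be checked directly, for instance from a block-diagonal structure; it does not follow from $\sup_n\|\mathcal{A}_n^{-1}\|<\infty$.

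In the other directions you are too pessimistic: no extra hypothesis is needed. Let $C$ be the supremum in (iii). The identity $\mathcal{A}_n^{-1}=T_n(1)\mathcal{A}_n^{-1}-\int_0^1T_n(t)\,dt$ gives $\sup_n\|\mathcal{A}_n^{-1}\|\le C+M$. Next use
\[
R(is,\mathcal{A}_n)=\int_0^\tau e^{-ist}T_n(t)\,dt+e^{-is\tau}T_n(\tau)\mathcal{A}_n^{-1}\bigl(isR(is,\mathcal{A}_n)-I\bigr)
\]
with $\tau=(2C|s|)^\alpha$. This gives $\|R(is,\mathcal{A}_n)\|\le2M\tau+|s|^{-1}$ for all $s\ne0$ and every $\alpha>0$. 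Combined with the Neumann series near $0$, it yields the strengthened (i) on the whole axis, and (ii) reduces to this through your moment-inequality step. Prefer this form to your splitting of $R(is,\mathcal{A}_n)\mathcal{A}_n^{-1}$. There the term $is\int_0^\tau e^{-ist}T_n(t)\mathcal{A}_n^{-1}\,dt$ must be integrated by parts. Bounding it by $|s|\tau\sup_t\|T_n(t)\mathcal{A}_n^{-1}\|$ gives only $|s|^{1+\alpha}$. Using the decay of $\|T_n(t)\mathcal{A}_n^{-1}\|$ instead gives $|s|^\alpha$ only when $\alpha>1$.
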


\begin{theorem}[\citet{MN2016}, Theorem 3.6]
\label{thm3.6}
Let $T_{n}(\cdot), n \in \mathbb{N}$, be a uniformly bounded sequence of $C_{0}$-semigroups on Hilbert spaces $\mathcal{H}_{n}$, \( n \in \mathbb{N} \), and let $\mathcal{A}_{n}$ be the corresponding infinitesimal generators, such that
$$
\begin{gathered}
i \mathbb{R} \subset \rho\left(\mathcal{A}_{n}\right) \text{,} \\
\sup _{n \in \mathbb{N}}\left\|\mathcal{A}_{n}^{-1}\right\|<\infty \text{,}
\end{gathered}
$$
and
$$
\sup _{\rho \in \mathbb{R}, n \in \mathbb{N}}\left\|R\left(i \rho, \mathcal{A}_{n}\right) \mathcal{A}_{n}^{-\alpha}\right\|<\infty \text{,}
$$
for a constant $\alpha>0$. Then there exist a positive constant $\delta$ independent of $n$ such that $[0, \delta] \subset \rho\left(\mathcal{A}_{n}\right)$ and we have
$$
\sup _{\operatorname{Re} \lambda \geq-\delta, n \in \mathbb{N}}\left\{\frac{|\operatorname{Im} \lambda|^{-\alpha}}{|\operatorname{Re} \lambda|}, \lambda \in \sigma_{p}\left(\mathcal{A}_{n}\right)\right\}<\infty.
$$
\end{theorem}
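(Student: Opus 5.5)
The idea is to turn the uniform resolvent bound into a lower bound on $|\operatorname{Re}\lambda|$ by testing it on eigenvectors. Concretely, if $\mathcal{A}_n z=\lambda z$ with $\operatorname{Re}\lambda=-\varepsilon$, then $z$ is an "almost eigenvector" for $i\operatorname{Im}\lambda$ with defect $\varepsilon$. Set $M_0:=\sup_n\|\mathcal{A}_n^{-1}\|$ and $C:=\sup_{\rho,n}\|R(i\rho,\mathcal{A}_n)\mathcal{A}_n^{-\alpha}\|$. The fractional powers $\mathcal{A}_n^{-\alpha}$ are well defined because $-\mathcal{A}_n$ is sectorial: the semigroups are uniformly bounded and $0\in\rho(\mathcal{A}_n)$.

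\textbf{Step 1: a uniform spectrum-free region around $0$.} Since $\|\mathcal{A}_n^{-1}\|\le M_0$, a Neumann series argument shows that $\lambda I-\mathcal{A}_n=-\mathcal{A}_n(I-\lambda\mathcal{A}_n^{-1})$ is invertible for $|\lambda|<1/M_0$, uniformly in $n$. Choose $\delta<1/M_0$, for instance $\delta=1/(2M_0)$. Then $[0,\delta]\subset\rho(\mathcal{A}_n)$, and more precisely every $\lambda\in\sigma_p(\mathcal{A}_n)$ satisfies $|\lambda|\ge 1/M_0$.

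\textbf{Step 2: locate the relevant eigenvalues.} Uniform boundedness of $T_n(\cdot)$ gives $\sigma(\mathcal{A}_n)\subset\{\operatorname{Re}\lambda\le 0\}$. The hypothesis $i\mathbb{R}\subset\rho(\mathcal{A}_n)$ rules out $\operatorname{Re}\lambda=0$. So any eigenvalue with $\operatorname{Re}\lambda\ge-\delta$ has $0<|\operatorname{Re}\lambda|\le\delta$. Combined with $|\lambda|\ge 1/M_0$, this gives
\[
|\operatorname{Im}\lambda|\ge \sqrt{M_0^{-2}-\delta^2}>0 .
\]
Hence $|\lambda|\le|\operatorname{Im}\lambda|+\delta\le K|\operatorname{Im}\lambda|$ with $K$ independent of $n$.

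\textbf{Step 3: the key estimate.} Write $\lambda=-\varepsilon+i\rho$, where $\mathcal{A}_nz=\lambda z$ and $\|z\|=1$. Then $(i\rho-\mathcal{A}_n)z=\varepsilon z$, so $z=\varepsilon R(i\rho,\mathcal{A}_n)z$. Since $\mathcal{A}_n^{-1}z=\lambda^{-1}z$, the functional calculus gives $\mathcal{A}_n^{-\alpha}z=\lambda^{-\alpha}z$, i.e. $z=\lambda^{\alpha}\mathcal{A}_n^{-\alpha}z$. Therefore
\[
1=\|z\|=\varepsilon|\lambda|^{\alpha}\,\|R(i\rho,\mathcal{A}_n)\mathcal{A}_n^{-\alpha}z\|\le C\,\varepsilon\,|\lambda|^{\alpha}\le CK^{\alpha}\,|\operatorname{Re}\lambda|\,|\operatorname{Im}\lambda|^{\alpha}.
\]
Rearranging gives $|\operatorname{Im}\lambda|^{-\alpha}/|\operatorname{Re}\lambda|\le CK^{\alpha}$ uniformly in $n$, which is the claim.

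The main obstacle is justifying $\mathcal{A}_n^{-\alpha}z=\lambda^{-\alpha}z$ for non-integer $\alpha$. I would handle it with the Balakrishnan integral representation of $\mathcal{A}_n^{-\alpha}$: restricted to the eigenvector it reduces to the scalar integral, which evaluates to $\lambda^{-\alpha}$ for the principal branch, valid since $\operatorname{Re}\lambda<0$. A second point needing care is the comparison $|\lambda|\asymp|\operatorname{Im}\lambda|$. It relies crucially on Step 1 to keep eigenvalues with $\operatorname{Re}\lambda\ge-\delta$ uniformly away from the real axis.
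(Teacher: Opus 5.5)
The paper gives no proof of this statement; it is quoted from \citet{MN2016}. Your argument is a complete and correct proof of the statement as written, and it is more direct than the usual perturbative route for results of this kind.

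\textbf{Comparison with the perturbative route.} That route first turns the hypothesis into a uniform growth bound $\|R(i\rho,\mathcal{A}_n)\|\le C'(1+|\rho|)^{\alpha}$. For the integer part of $\alpha$ one iterates $R(\lambda,\mathcal{A}_n)=\lambda R(\lambda,\mathcal{A}_n)\mathcal{A}_n^{-1}-\mathcal{A}_n^{-1}$, and for the fractional part one uses the moment inequality. It then uses the factorization $\lambda-\mathcal{A}_n=(i\rho-\mathcal{A}_n)\bigl(I+(\lambda-i\rho)R(i\rho,\mathcal{A}_n)\bigr)$ to show that $-\varepsilon+i\rho\in\rho(\mathcal{A}_n)$ whenever $\varepsilon\|R(i\rho,\mathcal{A}_n)\|<1$.

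\textbf{What each route buys.} The perturbative route gives a uniform spectrum-free region for the whole spectrum, with resolvent bounds inside it. This matters in infinite dimensions, where $\sigma(\mathcal{A}_n)\neq\sigma_p(\mathcal{A}_n)$ in general. Your eigenvector test sees only $\sigma_p$. That is exactly what is claimed, and it is all there is for the finite-dimensional $\mathcal{W}_n$ of the paper. In exchange, your route avoids the moment inequality entirely. Its only operator-theoretic input is that the fractional power acts on an eigenvector as a scalar of modulus $|\lambda|^{-\alpha}$, and your Balakrishnan argument establishes this.

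\textbf{Minor points.} Since $-\mathcal{A}_n$ is the sectorial operator, the eigenvalue of the fractional power is $(-\lambda)^{-\alpha}$, not $\lambda^{-\alpha}$. The principal branch is then justified by $\operatorname{Re}(-\lambda)>0$. Only the modulus enters Step~3, so nothing changes. Your Step~1 also supplies the meaningful form of the first conclusion, namely the uniform disc $\{|\lambda|<1/M_0\}\subset\rho(\mathcal{A}_n)$. As literally stated, $[0,\delta]\subset\rho(\mathcal{A}_n)$ is automatic from $0\in\rho(\mathcal{A}_n)$ and the Hille--Yosida theorem.
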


\begin{remark}
Theorem \ref{thm3.6} shows that, in general, uniform polynomial stability cannot be inferred solely from the spectral criterion stated therein, see \citet{MN2016} for more details.
\end{remark}

\subsubsection{Approximation of $(S_w)$ under Dirichlet--Dirichlet (BCs)}

Considering the Dirichlet--Dirichlet \textbf{(BCs)},
\[
\left. u \right|_{x=0, \pi} = 0 \quad \text{and} \quad \left. \theta \right|_{x=0, \pi} = 0 \quad \text{for} \quad t > 0.
\]
\citet{MN2016} demonstrated the uniform polynomial stability of the modal approximation using the eigenvectors of the uncoupled thermoelastic system (\( \gamma = 0 \)) in $(S_w)$ as the basis vectors.
\[
\phi_{j}=\sqrt{\frac{2}{\pi}} \sin j x, \quad \psi_{j}=\sqrt{\frac{2}{\pi}} \sin j x, \quad \xi_{j}=\sqrt{\frac{2}{\pi}} \sin j x, \quad j=1, \ldots, n.
\]
The corresponding matrices $M_{n}^{(i)}$ defined in \eqref{M_n} are expressed as
$$
\begin{array}{r}
\left(M_{n}^{(1)}\right)_{i j}=\left(\partial_x \phi_{i}, \partial_x \phi_{j}\right)_{L^{2}}, \quad\left(M_{n}^{(2)}\right)_{i j}=\left(\psi_{i}, \psi_{j}\right)_{L^{2}}, \quad\left(M_{n}^{(3)}\right)_{i j}=\left(\xi_{i}, \xi_{j}\right)_{L^{2}},
\end{array}
$$
and 
\[
\begin{aligned}
& \left( \tilde{D}_{n} \right)_{ij} = \left( \partial_x \phi_{i}, \partial_x \psi_{j} \right)_{L^{2}}, \quad \left( \tilde{F}_{n} \right)_{ij} = \left( \xi_{i}, \psi_{j} \right)_{L^{2}}, \quad \left( G_{n} \right)_{ij} = \left( \partial_x \xi_{i}, \partial_x \xi_{j} \right)_{L^{2}}.
\end{aligned}
\]
This leads to the following representation of the matrix $\mathcal{W}_n$
$$
\mathcal{W}_n =\left[\begin{array}{ccc}
0 & D_{n} & 0 \\
-D_{n} & 0 & -\gamma I_{n} \\
0 & \gamma I_{n} & -D_{n}^{2}
\end{array}\right],
$$
with
$$
D_{n}=\left[\begin{array}{ccc}
1 & & \\
& \ddots & \\
& & n
\end{array}\right].
$$
\begin{theorem}[\citet{MN2016}, Theorem 4.10]
The semigroups generated by \( \mathcal{W}_{n} \) are uniformly polynomially stable with order $\alpha=2$ and there exist positive constants \( M \), independent of \( n \), such that
$$
\left\|T_{n}(t) \mathcal{W}_{n}^{-1}\right\|_{\mathcal{H}_{n}} \leqslant \frac{M}{t^{\frac{1}{2}}} \quad \text { for } t>0 \quad \text{and} \quad n \in \mathbb{N}.
$$
\label{thm_4.10}
\end{theorem}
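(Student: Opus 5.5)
The plan is to invoke Theorem \ref{thm4.4} with $\alpha=2$. Since $\mathcal{W}_n$ is dissipative, $T_n(\cdot)$ is a uniformly bounded (contraction) family. It then suffices to show that $i\mathbb{R}\subset\rho(\mathcal{W}_n)$ for every $n$, together with the uniform resolvent bound
\[
\sup_{|s|\ge 1,\, n\in\mathbb{N}} \frac{1}{|s|^{2}}\|R(is,\mathcal{W}_n)\|<\infty .
\]
Condition (iii) of Theorem \ref{thm4.4} then gives exactly the stated estimate $\|T_n(t)\mathcal{W}_n^{-1}\|\le M t^{-1/2}$.

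The structure of $\mathcal{W}_n$ reduces everything to a family of $3\times 3$ problems. All blocks are diagonal (namely $D_n$, $I_n$ and $D_n^2$), so after permuting coordinates $\mathcal{W}_n$ is block diagonal. Its blocks are
\[
A_j=\begin{pmatrix}0&j&0\\-j&0&-\gamma\\0&\gamma&-j^2\end{pmatrix},\qquad j=1,\dots,n .
\]
Consequently $\|R(is,\mathcal{W}_n)\|=\max_{j\le n}\|(is-A_j)^{-1}\|$, and it is enough to bound $\|(is-A_j)^{-1}\|\le C|s|^2$ uniformly in $j\ge1$ and $|s|\ge1$. Since the $A_j$ are exactly the blocks of the continuous operator $\mathcal{W}$ in the sine basis, uniformity in $n$ is automatic once this holds.

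\textbf{Step 1: $i\mathbb{R}\subset\rho(A_j)$.} Suppose $A_jy=isy$ with $y=(y_1,y_2,y_3)$. The dissipation identity $\Re(A_jy,y)=-j^2|y_3|^2$ forces $y_3=0$. The third row then gives $\gamma y_2=0$, so $y_2=0$, and the first row gives $y_1=0$.

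\textbf{Step 2: the uniform bound.} Take $(is-A_j)y=f$ with $\|f\|=1$. The dissipation identity gives
\[
j^2|y_3|^2=\Re(f,y)\le\|y\| .
\]
From the third equation, $\gamma y_2=(is+j^2)y_3-f_3$, hence
\[
|y_2|\lesssim (|s|+j^2)|y_3|+1 .
\]
From the first two equations, $isy_1-jy_2=f_1$ and $isy_2+jy_1+\gamma y_3=f_2$, which lets us express $y_1$ in terms of $y_2$ and $f$. I would split into two regimes.
\begin{itemize}
\item[(a)] When $j^2\gtrsim|s|$ (parabolic regime), $|y_3|\le j^{-1}\|y\|^{1/2}$ makes $|y_2|\lesssim j\|y\|^{1/2}+1$, and the wave part should then be controllable directly.
\item[(b)] When $j\approx|s|$ (resonant regime), the worst case is $|s|=j$, where $is-A_j$ restricted to the wave block is singular. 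There I expect to solve exactly. Eliminating $y_1,y_2$, one finds that the determinant of $is-A_j$ equals
\[
(is+j^2)(j^2-s^2)+is\gamma^2 ,
\]
whose modulus near $s=\pm j$ is of order $\gamma^2|s|$. Since the cofactors are of size at most $O(|s|^3)$, this yields $\|(is-A_j)^{-1}\|\lesssim|s|^2$.
\end{itemize}

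Rather than energy estimates, the cleanest route is probably to use Cramer's rule on each $3\times3$ block: write $(is-A_j)^{-1}=\operatorname{adj}/\det$ and bound each cofactor divided by the determinant. The determinant modulus satisfies
\[
|\det|^2=\big(j^2(j^2-s^2)\big)^2+s^2\big(j^2-s^2+\gamma^2\big)^2 .
\]
One then checks, case by case over the relation between $j$ and $|s|$, that every cofactor over $|\det|$ is bounded by $C|s|^2$.

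The main obstacle is this case analysis near the resonance $j\approx|s|$, together with the uniformity of the lower bound on $|\det|$. Away from resonance, $|\det|$ is of order $j^2|j^2-s^2|$ and the estimates are easy; near resonance the lower bound $\gtrsim\gamma^2|s|$ must be matched against the largest cofactor, which is of order $j^3\sim|s|^3$, giving the exponent $2$.
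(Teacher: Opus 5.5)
Your proposal is correct, and it proves the resolvent bound by a genuinely different route from the one the paper relies on. The paper does not reprove this theorem; it quotes \citet{MN2016}. Its own proof of the analogous Dirichlet--Neumann result, Theorem~\ref{theorem_2}, argues by contradiction on the full $3n\times 3n$ system:
\begin{itemize}
\item take a normalized sequence with $\beta_m^2\|(i\beta_m-\mathcal{W}_n)z_m\|\to0$;
\item use the dissipation identity to get $\beta_m D_n\theta_m\to0$;
\item use multiplier-type inner products to force $v_m\to0$ and $u_m\to0$.
\end{itemize}
You instead use the fact that in the Dirichlet--Dirichlet modal basis the blocks $D_n$, $\gamma I_n$, $D_n^2$ are all diagonal. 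An orthogonal permutation therefore splits $\mathcal{W}_n$ into the $n$-independent blocks $A_j$, and you bound $(is-A_j)^{-1}$ explicitly by Cramer's rule. Your determinant $(is+j^2)(j^2-s^2)+is\gamma^2$ is correct. This route gives more than the contradiction argument: uniformity in $n$ is automatic and the constants are explicit. It also shows the exponent $2$ is sharp, since at $s=j$ the cofactor $j(is+j^2)$ divided by $\det=ij\gamma^2$ has modulus of order $j^2/\gamma^2$. The price is that it needs exact mode-by-mode decoupling. In the Neumann-type cases (Propositions~\ref{prop2} and~\ref{prop3}), $D_n$ or $F_n$ are full matrices with entries like $\frac{4}{\pi}\frac{ij}{i^2-j^2}$, so there are no $3\times 3$ blocks; there the contradiction/multiplier argument is the template that carries over.

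A few points to tighten.

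\textbf{The resonance obstacle closes quickly.} With $\varepsilon=j^2-s^2$, the quantity $|\det|^2=j^4\varepsilon^2+s^2(\varepsilon+\gamma^2)^2$ is a quadratic in $\varepsilon$ with minimum $\gamma^4j^4s^2/(j^4+s^2)$. Hence $|\det|\ge\gamma^2|s|/\sqrt{2}$ whenever $j^2\ge|s|$. This covers the window $|s|/2\le j\le 2|s|$ once $|s|\ge 4$, and there all cofactors are $O(|s|^3)$. Outside the window:
\begin{itemize}
\item for $j\ge 2|s|$, $|\det|\ge\frac{3}{4}j^4$ against cofactors $O(j^3)$;
\item for $j\le|s|/2$ and $|s|$ large, $|\det|\gtrsim s^2(j^2+|s|)$ against cofactors $O(|s|(|s|+j^2))$.
\end{itemize}
The finitely many remaining pairs in bounded ranges are handled by compactness. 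You can therefore drop the energy sketch in regime (a), which is not carried through and whose range overlaps (b).

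\textbf{Record the uniform bound on the inverse.} You should also state $\sup_n\|\mathcal{W}_n^{-1}\|<\infty$. A uniform-in-$n$ criterion needs control near $s=0$: a generator with eigenvalue $-1/n$ satisfies condition (i) of Theorem~\ref{thm4.4} but not (iii) uniformly. The paper lists this bound among the facts taken from \citet{MN2016}. In your setting it is immediate from $\det(-A_j)=j^4$, which gives $\|A_j^{-1}\|\le C(\gamma)$.

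\textbf{Step 1 at $s=0$.} The first row yields only $isy_1=0$, so for $s=0$ use the second row, which gives $-jy_1=0$. In fact the determinant alone gives $i\mathbb{R}\subset\rho(A_j)$: its real part $j^2(j^2-s^2)$ and imaginary part $s(j^2-s^2+\gamma^2)$ cannot vanish simultaneously.
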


\citet{MN2016} showed that the family of generators $\mathcal{W}_n$ satisfies $i \mathbb{R} \subset \rho\left(\mathcal{W}_n \right)$, $n \in \mathbb{N}$, and $\sup _{n \in \mathbb{N}}|| \mathcal{W}_n^{-1} \|<\infty$ and that the approximate system of $(S_w)$ still decays to zero with an order no less than $\alpha=2$.

Using the Trotter-Kato Theorem, \citet{MN2016} showed the strong convergence of the approximating semigroups $T_{n}(t)$, $n \in \mathbb{N}$, to $T(t)=\mathrm{e}^{t \mathcal{W}}$, $t>0$. 

\subsubsection{Approximation of $(S_w)$ under Dirichlet--Neumann (BCs)}
\label{2_sub_DN}

Under Dirichlet--Neumann \textbf{(BCs)}, 
\begin{equation*}
u|_{x=0,\pi} = 0 \quad \text{and} \quad  \partial_x\theta|_{x=0,\pi} = 0 \quad \text{for} \quad t > 0.
\end{equation*}
and the basis vectors are given by
\[
\phi_{j}=\sqrt{\frac{2}{\pi}} \frac{1}{j} \sin j x, \quad \psi_{j}=\sqrt{\frac{2}{\pi}} \sin j x, \quad \xi_{j}=\sqrt{\frac{2}{\pi}} \cos j x, \quad j=1, \ldots, n.
\]
This leads to the following representation of the matrix $\mathcal{W}_n$
\[
\mathcal{W}_{n}=\left[\begin{array}{ccc}
0 & D_{n} & 0 \\
-D_{n} & 0 & \gamma D_{n} \\
0 & -\gamma D_{n} & -D_{n}^{2}
\end{array}\right],
\]
with
$$
D_{n}=\left[\begin{array}{lll}
1 & & \\
& \ddots & \\
& & n
\end{array}\right].
$$
\begin{lemma}
\label{lemma_1}
The family of generators \( \mathcal{W}_n \) satisfies \( i\mathbb{R} \subset \rho (\mathcal{W}_{n}) \), \( n\in \mathbb{N} \), and \( \underset{n\in \mathbb{N}}{\sup} ~ \norm{\mathcal{W}_n^{-1}} < \infty \).
\end{lemma}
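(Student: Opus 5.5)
The matrix $\mathcal{W}_n$ is block-structured, and every block is a polynomial in the same diagonal matrix $D_n=\operatorname{diag}(1,\dots,n)$. So $\mathcal{W}_n$ decouples exactly into $n$ independent $3\times 3$ blocks, one for each mode $j=1,\dots,n$:
\[
W_j=\begin{pmatrix} 0 & j & 0\\ -j & 0 & \gamma j\\ 0 & -\gamma j & -j^2\end{pmatrix}.
\]
More precisely, after a permutation of coordinates, which is an isometry of $\mathbb{R}^{3n}$ with the Euclidean norm used on $\mathcal{H}_n$, $\mathcal{W}_n$ is unitarily similar to $\operatorname{diag}(W_1,\dots,W_n)$. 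Hence
\[
\sigma(\mathcal{W}_n)=\bigcup_{j\le n}\sigma(W_j),\qquad \|\mathcal{W}_n^{-1}\|=\max_{j\le n}\|W_j^{-1}\|.
\]
Both claims of the lemma therefore reduce to statements about the single family $W_j$, $j\in\mathbb{N}$, and I will prove them uniformly in $j$. Uniformity in $j$ immediately gives uniformity in $n$.

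\textbf{Step 1: $i\mathbb{R}\subset\rho(W_j)$.} Suppose $W_jz=i s z$ with $s\in\mathbb{R}$ and $z=(z_1,z_2,z_3)$. The real part of $(W_jz,z)$ equals $-j^2|z_3|^2$, because the rest of the matrix is skew-symmetric. This real part must vanish, so $z_3=0$. The third row then gives $-\gamma j z_2=0$, hence $z_2=0$ since $\gamma\neq0$. The second row gives $-jz_1=0$, hence $z_1=0$. So no eigenvalue lies on $i\mathbb{R}$, and in particular $0\in\rho(W_j)$.

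\textbf{Step 2: uniform bound on $W_j^{-1}$.} I would solve $W_jz=f$ explicitly. The first row gives $z_2=f_1/j$. The third row gives $z_3=-(f_3+\gamma j z_2)/j^2=-(f_3+\gamma f_1)/j^2$. The second row gives $z_1=(\gamma j z_3-f_2)/j=\gamma z_3-f_2/j$. Each of these coefficients is bounded by a constant depending only on $\gamma$, since $j\ge1$. This yields
\[
\|W_j^{-1}\|\le C(1+\gamma+\gamma^2),
\]
independent of $j$, and hence $\sup_n\|\mathcal{W}_n^{-1}\|<\infty$.

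\textbf{Main obstacle.} The only delicate point is justifying the reduction to $\mathbb{R}^{3n}$ with the Euclidean norm. For this basis the Gram matrices $M_n^{(i)}$ are identities: the factor $1/j$ in $\phi_j$ normalizes $\partial_x\phi_j$, and $\cos jx$ is $L^2$-orthonormal for $j\ge1$. So $L_n=I$, and the $\mathcal{H}_n$-norm coincides with the Euclidean norm. I would also note that the constant mode $\cos 0x$ is excluded from the basis, consistent with the mean-zero reduction for $\theta$ described in the strongly coupled Dirichlet--Neumann case. That exclusion is what keeps $W_j$ invertible with $j\ge1$.
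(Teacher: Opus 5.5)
Your proof is correct. It rests on the same two ingredients as the paper's: the dissipation identity to rule out eigenvalues on $i\mathbb{R}$, and an explicit inverse to bound $\mathcal{W}_n^{-1}$. You organize them through a different decomposition, namely the permutation (hence unitary) similarity $\mathcal{W}_n\simeq\operatorname{diag}(W_1,\dots,W_n)$.

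The paper works with the full $3n\times 3n$ matrix. It supposes $i\beta_m\neq 0$ is an eigenvalue and uses $\operatorname{Re}(\mathcal{W}_m z_m,z_m)=-\|D_m\theta_m\|^2$ to get $\theta_m=0$. It then simply asserts $z_m=0$. For the inverse, it computes the max-row-sum norm $\|W_n^{-1}\|_\infty=\gamma^2+\gamma+1$, which is the largest row sum of your $W_j^{-1}$, attained at $j=1$. It then states that it suffices to bound $\mathcal{W}_n^{-1}$ in one norm.

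Your reduction buys two things:
\begin{enumerate}
\item The propagation $z_3=0\Rightarrow z_2=0\Rightarrow z_1=0$, which genuinely needs $\gamma\neq0$, is written out rather than asserted.
\item More importantly, your bound is in the correct norm. Since $L_n=I$, the $\mathcal{H}_n$-norm is Euclidean, and you get $\|\mathcal{W}_n^{-1}\|=\max_{j\le n}\|W_j^{-1}\|$ exactly. A bound uniform in $n$ for $\|\cdot\|_\infty$ alone does not give one for the Euclidean operator norm, because in general $\|A\|_2$ can exceed $\|A\|_\infty$ by a factor $\sqrt{3n}$ on $\mathbb{R}^{3n}$. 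The paper's computation can be repaired by also bounding the max-column-sum norm and using $\|A\|_2\le(\|A\|_1\|A\|_\infty)^{1/2}$; your decoupling makes this unnecessary.
\end{enumerate}

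In return, the paper's dissipativity argument for $i\mathbb{R}\subset\rho(\mathcal{W}_n)$ does not depend on every block being a function of the diagonal matrix $D_n$. So that part could carry over to the Neumann--Dirichlet and Neumann--Neumann matrices of Propositions~\ref{prop2} and~\ref{prop3}. Their blocks are no longer all diagonal, so your block decoupling is not available there.
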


\begin{proof}
To prove this lemma, we proceed by contradiction.

Let's assume that there exists a fixed \( m \in \mathbb{N} \) and \( 0\neq\beta_{m} \in \mathbb{R} \) such that \( i \beta_{m} \) belongs to the spectrum of \( \mathcal{W}_m \). Since \(\mathcal{W}_{m}\) is of finite rank, it is compact,
thus, \( i \beta_{m} \) must be an eigenvalue of $\mathcal{W}_m$. 

Consequently, there exists an eigenvector \( z_{m} \in\mathcal{H}_{m} \), normalized such that \( \norm{z_{m}}_{\mathcal{H}_{m}} = 1 \). Let \( y_{m}^T = (u_{m},v_{m},\theta_{m}) \in \mathbb{R}^{3m} \) denote the coordinate representation of $z_m$ also normalized so that \( \norm{y_{m}}_{\mathbb{R}^{3m}} = 1 \).

This leads to the spectral equations
\begin{equation*}
\begin{aligned}
i \beta_{m} z_{m} - \mathcal{W}_{m} z_{m} &= 0, \\
i \beta_{m} y_{m} - W_{m} y_{m} &= 0.
\end{aligned}
\end{equation*}
Taking the inner product of each equation with \( z_{m}\in\mathcal{H}_m \) and $y_m\in\mathbb{R}^{3m}$, respectively, yields
\begin{equation*}
(\mathcal{W}_m z_m, z_m)_{\mathcal{H}_{m}} = (W_m y_m, y_m)_{\mathbb{R}^{3m}} = - \norm{D_{m} \theta_{m}}^{2}_{\mathbb{R}^{m}} = i\beta_m=0.
\end{equation*}
Since \( \beta_m \neq 0 \) by assumption, this leads to a contradiction unless $D_{m}\theta_{m}=0$. However, by the properties of $D_m$, this implies $\theta=0$, and consequently $z_m=0$, , contradicting the normalization $\norm{z_{m}}_{\mathcal{H}_{m}} = 1$. This completes the proof.

Furthermore, considering \( W_m \) as the matrix representation of \( \mathcal{W}_m \), it suffices to show that for one norm, \( \underset{n\in \mathbb{N}}{\sup} ~ \norm{\mathcal{W}_n^{-1}} < +\infty \). 

For \( W_n^{-1} \), straightforward calculations yield

\begin{equation*}
\norm{W_{n}^{-1}}_\infty = \underset{i}{\max~}  \sum_{j} \abs{w_{ij}} = \gamma^{2} + \gamma +1 < \infty.
\end{equation*}

Here, \( \norm{D_{n}^{-2}}_{\infty} = 1 \), which completes the proof.
\end{proof}


\begin{theorem}
\label{theorem_2}
The semigroups generated by \( \mathcal{W}_{n} \) are uniformly polynomially stable with order $\alpha=2$ and there exist positive constants \( M \), independent of \( n \), such that
$$
\left\|T_{n}(t) \mathcal{W}_{n}^{-1}\right\|_{\mathcal{H}_{n}} \leqslant \frac{M}{t^{\frac{1}{2}}} \quad \text { for } t>0 \quad \text{and} \quad n \in \mathbb{N}.
$$
\end{theorem}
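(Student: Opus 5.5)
The plan is to verify condition (i) of Theorem~\ref{thm4.4} with $\alpha=2$, uniformly in $n$. Theorem~\ref{thm4.4} then gives (iii), which is exactly the claimed estimate $\|T_n(t)\mathcal{W}_n^{-1}\|\le M t^{-1/2}$.

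The hypotheses of Theorem~\ref{thm4.4} are already in hand. The semigroups $T_n(\cdot)$ are contractions, since $\Re(\mathcal{W}_n y,y)=-\|D_n\theta\|^2\le 0$, so they are uniformly bounded. Lemma~\ref{lemma_1} gives $i\mathbb{R}\subset\rho(\mathcal{W}_n)$ and $\sup_n\|\mathcal{W}_n^{-1}\|<\infty$. It therefore remains to prove
\[
\|(is-\mathcal{W}_n)^{-1}\|\le C|s|^2,\qquad |s|\ge 1,\ n\in\mathbb{N},
\]
with $C$ independent of $n$.

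The key structural observation is that, with the cosine basis for $\xi_j$, every block of $\mathcal{W}_n$ is diagonal. After a permutation of coordinates, $\mathcal{W}_n$ is block diagonal with $3\times 3$ blocks
\[
A_j=\begin{pmatrix}0 & j & 0\\ -j & 0 & \gamma j\\ 0 & -\gamma j & -j^2\end{pmatrix},\qquad j=1,\dots,n.
\]
Since the permutation is orthogonal, $\|(is-\mathcal{W}_n)^{-1}\|=\max_{j\le n}\|(is-A_j)^{-1}\|$. So it suffices to bound $\|(is-A_j)^{-1}\|\le C|s|^2$ uniformly in $j\ge1$ and $|s|\ge1$; this uniformity is the only real content.

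Fix $j$, and let $y=(u,v,\theta)\in\mathbb{C}^3$ and $f=(f_1,f_2,f_3)$ satisfy $(is-A_j)y=f$. I will estimate the three components in turn.
\begin{enumerate}
\item \emph{Temperature.} Taking the real part of the inner product with $y$ gives $j^2|\theta|^2\le |f|\,|y|$, hence $|\theta|\le j^{-1}(|f||y|)^{1/2}$.
\item \emph{Velocity.} The third equation reads $is\theta+\gamma j v+j^2\theta=f_3$. Solving for $v$ gives
\[
|v|\le \frac{|f|+(|s|+j^2)|\theta|}{\gamma j}\le C\Big(|f|+\big(1+|s|\big)(|f||y|)^{1/2}\Big),
\]
using $j\ge1$ and $|s|/j^2\le |s|$.
\item \emph{Displacement.} The second equation reads $isv+ju-\gamma j\theta=f_2$. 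It gives $u=j^{-1}(f_2-isv)+\gamma\theta$, so $|u|\le |f|+|s||v|+\gamma|\theta|$.
\end{enumerate}

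Combining these, $|y|\le C\big((1+|s|)|f|+(1+|s|)^2(|f||y|)^{1/2}\big)$. Applying Young's inequality $ab\le \tfrac12 a^2+\tfrac12 b^2$ to the last term absorbs $|y|$ and yields $|y|\le C(1+|s|)^4|f|$.

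This is too crude, so I expect the main obstacle to be sharpening the absorption step to reach exactly the power $|s|^2$. I would first refine the estimate of $u$ using the first equation, $isu-jv=f_1$. This gives $|u|\le |s|^{-1}(|f|+j|v|)$, which I would use in the regime $j\lesssim|s|$ in place of the second equation. In the complementary regime $j\gtrsim|s|$, the term $(|s|+j^2)/j$ in step 2 is of order $j$, so $\theta$-damping controls $v$ directly without loss, and everything is $O(1)$ in $s$. Splitting into these two regimes should give $|y|\le C(1+|s|)^2|f|$, where only the uniformity in $j$ matters.

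If this direct splitting becomes messy, the fallback is the contradiction argument used for Theorem~\ref{thm_LZb94}. One assumes $|s_k|^{2}\|(is_k-\mathcal{W}_{n_k})z_k\|\to0$ with $\|z_k\|=1$. The dissipation identity then gives $\|D\theta_k\|=o(|s_k|^{-1})$, and one propagates this smallness through the equations above to conclude $z_k\to0$, which contradicts $\|z_k\|=1$.
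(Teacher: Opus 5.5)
Your argument is correct, and it takes a different route from the paper. The paper argues by contradiction on the full $3n\times 3n$ matrix. It takes unit vectors $y_m$ and $\beta_m\to\infty$ with $\|\beta_m^2(i\beta_m-\mathcal{W}_n)y_m\|\to0$, and extracts $\|\beta_m D_n\theta_m\|\to0$ from the dissipation identity. It then pairs the component equations with $v_m$, $D_n\theta_m$, $D_n^{-1}u_m$ and $D_n^{-1}v_m$ to force $v_m\to0$ and $u_m\to0$; your fallback is essentially this. You instead use the fact that in the Dirichlet--Neumann case every block of $\mathcal{W}_n$ is diagonal, split it orthogonally into the $3\times3$ blocks $A_j$, and prove an explicit a priori bound.

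Your regime split does close. Since $\min(j/|s|,|s|/j)\le 1$, the first and second equations together give $|u|\le |f|+|v|+\gamma|\theta|$ for every $j$. The third gives $|v|\le \gamma^{-1}(|f|+(1+|s|)(|f||y|)^{1/2})$. One use of Young's inequality then yields $|y|\le C(1+|s|)^2|f|$ uniformly in $j$, and only Theorem~\ref{thm4.4} is needed.

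Your route buys explicit constants and avoids the sequence bookkeeping of the contradiction argument; the paper simply takes $\beta_m\to\infty$ for granted. It also exposes that order $2$ is far from sharp for this particular matrix. The eigenvalues of $A_j$ are $\pm ij-\gamma^2/2+O(1/j)$ and $-j^2+O(1)$, so the resolvent is in fact uniformly bounded on $i\mathbb{R}$. This is consistent with the displayed $\mathcal{W}_n$ being identical to the strongly coupled Dirichlet--Neumann matrix $\mathcal{S}_n$.

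The paper's contradiction argument, by contrast, never uses diagonality. That is what it leans on when asserting that ``the same proof'' covers the Neumann--Dirichlet and Neumann--Neumann cases, where $D_n$ and $F_n$ are full matrices and your block decomposition is unavailable.
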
 

\begin{proof}
The family \( \mathcal{W}_n \), \( n \in \mathbb{N} \), satisfies the hypothesis of Lemma \ref{lemma_1}. 

To prove the uniform polynomial stability of the approximation scheme, we first need to show the following inequality
\begin{equation}
\sup_{\abs{\beta}\geq1, ~n \in \mathbb{N}} \dfrac{1}{\abs{\beta}^2} \norm{(i\beta I_{3n}-\mathcal{W}_n)^{-1}} < \infty.
\label{4.59}
\end{equation}
Then, the result follows straightforwardly by Theorems \ref{thm4.4} and \ref{thm3.6}.

We proceed by a contradiction argument. Assuming that \eqref{4.59} does not hold true implies the existence of a subsequence \( \mathcal{W}_{n} \) (still noted \( \mathcal{W}_{n} \)), a sequence \( \beta_{m} \in \mathbb{R}^{+} \) with \( \beta_{m} \to \infty \) as \( m \to \infty \), and a sequence \( z_{m} \in \mathcal{H}_{n} \) such that as \( m \to \infty \),

\begin{equation*}
\lim_{m \to \infty} \norm{\beta_m^2(i\beta_m I_{3n} -\mathcal{W}_n)z_m} = 0.
\label{4.60}
\end{equation*}

Denote \( y_{m}^T = (u_{m},v_{m},\theta_{m}) \in \mathbb{R}^{3n} \) the corresponding coordinate vector to \( z_m \). This implies

\begin{equation*}
\lim_{m \to \infty} \norm{\beta_m^2(i\beta_m I_{3n} - {W}_n)y_m} = 0,
\label{4.60bis}
\end{equation*}
which translates to 
\begin{equation}
\norm{\beta_m^2(i\beta_m u_m -D_n v_m)} \to 0,
\label{4.61}
\end{equation}
\begin{equation}
\norm{\beta_m^2(i \beta_m v_m +D_n u_m - \gamma D_n \theta_m)} \to 0,
\label{4.62}
\end{equation}
\begin{equation}
\norm{\beta_m^2(i \beta_m \theta_m + \gamma D_n v_m + D_n^2 \theta_m )} \to 0.
\label{4.63}
\end{equation}
Our objective is to show that \( \norm{y_{m}}_{\mathbb{R}^{3n}} \to 0 \) as \( m \to \infty \), thereby reaching a contradiction given \( \norm{y_{m}}_{\mathbb{R}^{3n}} = 1 \). We have
\begin{equation*}
\abs{\Re(\beta_m^2(i \beta_m I_{3n}-\mathcal{W}_n)y_m,y_m)}  \leq  \norm{\beta_m^2(i \beta_m I_{3n}-\mathcal{W}_n)y_m} \to 0.
\label{4.64}
\end{equation*}
Hence, 
\begin{equation*}
\norm{\beta_m D_n \theta_m}^2 = \Re(\beta_m^2(i \beta_m I_{3n}-\mathcal{W}_n)y_m,y_m) \to 0.
\label{4.65}
\end{equation*}
Since \( D_n \) is invertible with \( \norm{D_n} > 1 \), we have
\begin{equation}
\norm{\beta_m \theta_m}  \leq  \norm{\beta_m D_n \theta_m} \to 0,
\label{4.66}
\end{equation}
which implies
\begin{equation}
\norm{\theta_m} \to 0.
\label{4.67}
\end{equation}
Combining \( \norm{y_{m}}_{\mathbb{R}^{3n}} = 1 \) and \eqref{4.67}, we deduce that
\begin{equation}
\norm{\begin{pmatrix}
u_m\\
v_m
\end{pmatrix}}^2_{\mathbb{R}^{2n}} \to 1.
\label{4.68}
\end{equation}
Next, we aim to demonstrate that \( \norm{v_{m}} \) also converges to zero. We take the inner product of \eqref{4.63} with \( v_m \), which yields
\begin{equation*}
i(\beta_m \theta_m,v_m)+(D_n \theta_m,D_n v_m)+\gamma (D_n v_m, v_m) \to 0.
\label{4.69}
\end{equation*}
From \eqref{4.66} and \eqref{4.68}, we have
\begin{equation*}
(\beta_m \theta_m , v_m) \to 0.
\end{equation*}

The inner product of \eqref{4.61} with \( D_n \theta_m \) and \eqref{4.66}, \eqref{4.68} gives
\begin{equation*}
(D_n \theta_m,D_n v_m) \to 0.
\end{equation*}
This further leads to
\begin{equation*}
\norm{v_m} \to 0.
\label{4.70}
\end{equation*}
Using the difference between the inner product of \eqref{4.61} with \( D_n^{-1} v_m \) and the inner product of \eqref{4.62} with \( D_n^{-1} u_m \), we derive
\begin{equation*}
\norm{u_m}^{2} + \norm{v_m}^{2} - \gamma (\theta_m, u_m) \to 0.
\end{equation*}
Given \( (\theta_m, u_m) \to 0 \) from \eqref{4.67} and \eqref{4.68}, we conclude
\begin{equation*}
\norm{u_m} \to 0.
\label{4.71}
\end{equation*}
Therefore, \( \norm{y_{m}}_{\mathbb{R}^{3n}} \to 0 \) as \( m \to \infty \), leading to the anticipated contradiction.
\end{proof}




\begin{theorem}
\label{theorem_3}
The convergence of \( T_{n}(\cdot) \), $n \in \mathbb{N}$, to \( T(\cdot) \) in \( \mathcal{H} \), is strong and uniform over bounded \( t\)-intervals.
\end{theorem}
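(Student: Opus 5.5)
The plan is to apply the Trotter--Kato approximation theorem, in the form used by \citet{MN2016} for the Dirichlet--Dirichlet case and by \citet{liu1994uniform} for $(S_s)$. Let $P_n:\mathcal{H}\to\mathcal{H}_n$ be the orthogonal projection onto $\mathcal{H}_n$, the span of the basis $E_j$ built from $\sqrt{2/\pi}\,\sin jx$ and $\sqrt{2/\pi}\,\cos jx$ for the Dirichlet--Neumann setting of Section~\ref{2_sub_DN}. The theorem requires three ingredients.

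\textbf{Step 1 (stability).} We need $\|T_n(t)\|\le Me^{\omega t}$ uniformly in $n$. This holds with $M=1$ and $\omega=0$, because $\Re(\mathcal{W}_n y,y)=-\|D_n\theta\|^2\le 0$, so each $T_n(\cdot)$ is a contraction semigroup.

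\textbf{Step 2 (approximation).} We need $P_n z\to z$ for every $z\in\mathcal{H}$. This follows from the completeness of $\{\sin jx\}$ and $\{\cos jx\}$ in $L^2(0,\pi)$. The first component of the state is $\partial_x u$, and $\partial_x\phi_j=\sqrt{2/\pi}\cos jx$ spans the mean-zero subspace; for Dirichlet $u$ the component $\partial_x u$ has zero mean. The Neumann temperature has conserved mean, so as in Section~\ref{1_sub_DN} we remove the constant mode by passing to $\bar\theta$. This reduction must be set up so that the limiting space matches the span of $\{\cos jx\}_{j\ge1}$.

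\textbf{Step 3 (consistency).} We need $\mathcal{W}_nP_nz\to\mathcal{W}z$, or equivalently convergence of the resolvents, for $z$ in a core of $\mathcal{W}$. The modal basis consists of eigenfunctions of the uncoupled operators, and the coupling terms $\pm\gamma$ act diagonally in these bases: $\mathcal{W}_n$ has the block form with $\gamma D_n$ coming from $(\xi_i,\psi_j)$-type pairings. Consequently $P_n$ essentially commutes with $\mathcal{W}$ on smooth functions, so $\mathcal{W}_nP_nz=P_n\mathcal{W}z$ for $z$ in the core $\mathcal{D}$ of finite trigonometric combinations satisfying the boundary conditions. Then
\[
\|\mathcal{W}_nP_nz-\mathcal{W}z\|=\|(P_n-I)\mathcal{W}z\|\to 0 .
\]
To confirm that $\mathcal{D}$ is a core, I would show that $(\lambda I-\mathcal{W})\mathcal{D}$ is dense for some $\lambda>0$. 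Since $(\lambda-\mathcal{W})$ maps each finite modal block $\mathrm{span}\{E_j,E_{n+j},E_{2n+j}\}$ into itself and is invertible there (for $\lambda>0$, by dissipativity), its range contains all finite combinations, which are dense.

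\textbf{Conclusion and main obstacle.} Trotter--Kato then gives $T_n(t)P_nz\to T(t)z$ for every $z\in\mathcal{H}$, uniformly for $t$ in bounded intervals. The resolvent version alternatively follows from Lemma~\ref{lemma_1}, via $\sup_n\|\mathcal{W}_n^{-1}\|<\infty$ together with $\mathcal{W}_n^{-1}P_n\to\mathcal{W}^{-1}$ strongly. I expect the main obstacle to be Step 3, specifically verifying the exact commutation $\mathcal{W}_nP_n=P_n\mathcal{W}$ on the core. The sine/cosine mixing between $\psi_j$ and $\xi_j$ (velocity Dirichlet, temperature Neumann) means the coupling $\gamma\theta$ does not map $\cos$ modes to $\sin$ modes diagonally in $L^2$. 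The cleanest approach is to interpret the coupling in the variational form that produced the entries $\gamma D_n$, and to check that this form coincides with $P_n\mathcal{W}$ on $\mathcal{D}$. If exact commutation fails, I would instead estimate $\|(P_n\mathcal{W}-\mathcal{W}_nP_n)z\|$ directly for smooth $z$ and show that it decays like a Fourier tail.
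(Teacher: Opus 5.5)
Your skeleton (contractivity, $P_n\to I$, consistency on a core, Trotter--Kato) is the same as the paper's. However, Step~3 cannot be completed as you set it up, because the structure it relies on is absent in the Dirichlet--Neumann case. For $(S_w)$ the coupling block comes from $(\xi_i,\psi_j)_{L^2}=\frac{2}{\pi}\int_0^\pi\cos(ix)\sin(jx)\,dx$. This vanishes for $i=j$ and equals $\frac{4}{\pi}\frac{j}{j^2-i^2}$ for $|i-j|$ odd. The diagonal block $\gamma D_n$ comes instead from $(\partial_x\xi_i,\psi_j)_{L^2}=-i\,\delta_{ij}$, i.e.\ from the strong coupling terms $\gamma\partial_x\theta$, $\gamma\partial_x\partial_t u$. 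In fact the matrix of Section~\ref{2_sub_DN} is literally the Dirichlet--Neumann $\mathcal{S}_n$ of Section~\ref{1_sub_DN}. It is therefore consistent with $\mathcal{S}$, not with $\mathcal{W}$, and your proposed check (that the variational form producing $\gamma D_n$ coincides with $P_n\mathcal{W}$ on the core) fails. For the same reason $P_n$ does not commute with $\mathcal{W}$, and your core argument is also wrong. Indeed, $\gamma\cos(jx)$ has infinitely many nonzero sine coefficients, and $\gamma\sin(jx)$ with $j$ odd has nonzero mean, so finite modal blocks are not invariant. (The paper's own proof has exactly this defect: it writes $\gamma\theta$ and $\partial_x^2\theta$ as sine series.) What does work is to take the genuine Galerkin operator $\mathcal{W}_n:=P_n\mathcal{W}|_{\mathcal{H}_n}$, with its non-diagonal coupling, and split $\mathcal{W}=\mathcal{W}_0+C$. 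Here $\mathcal{W}_0$ (the case $\gamma=0$) is block diagonal in the modal basis and commutes with $P_n$, and $C(u,v,\theta)=(0,-\gamma\theta,\gamma v)$ is bounded with $\|C\|\le\gamma$. Then for every $z\in\mathcal{D}(\mathcal{W})$
\[
\mathcal{W}_nP_nz-\mathcal{W}z=(P_n-I)\mathcal{W}z+P_nC(P_n-I)z,
\]
whose norm is at most $\|(I-P_n)\mathcal{W}z\|+\gamma\|(I-P_n)z\|$. No commutation of the coupling and no special core are needed.

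This still requires $P_nz\to z$, and that is where Step~2 breaks down: the mean of $\theta$ is \emph{not} conserved for $(S_w)$. Integrating the heat equation gives $\frac{d}{dt}\int_0^\pi\theta\,dx=\gamma\int_0^\pi\partial_t u\,dx$, so the conserved quantity is $\int_0^\pi(\theta-\gamma u)\,dx$. Moreover, $\mathcal{W}$ has the nontrivial kernel element $(u,v,\theta)=(\frac{\gamma}{2}x(x-\pi),0,1)$. In particular $\mathcal{W}^{-1}$ does not exist, so your resolvent alternative via Lemma~\ref{lemma_1} is unavailable as well. Hence the closed span $H_0^1\times L^2\times L^2_0$ of the basis, with $L^2_0=\{\theta\in L^2:\int_0^\pi\theta\,dx=0\}$, is not invariant under $T(t)$. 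For $z=(0,\sin x,0)$ one has $P_nz=z$, and every $T_n(t)P_nz$ lies in that closed subspace. By contrast, $\int_0^\pi\theta(t)\,dx=\gamma\int_0^\pi u(t)\,dx=2\gamma t+o(t)\neq 0$ for small $t>0$, so $T(t)z$ lies outside it. Thus no generator acting on these $\mathcal{H}_n$ can give $T_n(t)P_nz\to T(t)z$. The statement becomes provable only after you add the constant mode $\xi_0=1/\sqrt{\pi}$ to the temperature basis and use the non-diagonal Galerkin coupling. At that point your Steps~1--3 go through with the identity above.
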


\begin{proof}

Our objective here is to prove the strong convergence of the approximating semigroups \( T_n(t) \) to \( T(t) = e^{t\mathcal{W}}, t>0, \) using the Trotter-Kato Theorem as outlined in \citet{Pazy}.

Recall that \( W_n \) is the matrix representation of the operator \( \mathcal{W}_n \) and that \( \mathcal{W} \) and \( \mathcal{W}_n \) are dissipative.

\( \mathcal{D} = \mathcal{D}(\mathcal{W}) \cap (H^4 \times H^3 \times H^4) \) is dense in \( \mathcal{H} \) and given \( (I - \mathcal{W})\mathcal{D}(\mathcal{W}) = \mathcal{H} \), it is also known that \( (I - \mathcal{W})\mathcal{D} \) is dense in \( \mathcal{H} \). 


For \( z \in \mathcal{D} \), we have then
\begin{equation*}
z = \sum_{j=1}^{\infty}
\begin{bmatrix}
a_j \begin{pmatrix} \frac{1}{j}\sin(jx) \\ 0 \\ 0 \end{pmatrix}
+b_j \begin{pmatrix} 0 \\ \sin(jx) \\ 0 \end{pmatrix}
+c_j \begin{pmatrix} 0 \\ 0 \\ \cos(jx) \end{pmatrix}
\end{bmatrix},
\end{equation*}

where \( \{ a_j j^3, b_j j^3, c_j^4 \}_{j \geq 1} \) belongs to the sequence space \( l^2 \). Further, we get

\begin{equation*}
\mathcal{W}z = 
\begin{pmatrix}
\sum_{i=1}^\infty b_i \sin(ix) \\
\sum_{i=1}^\infty (-a_i i - \gamma c_i)\sin(ix) \\
\sum_{i=1}^\infty (\gamma b_i - c_i i^2)\sin(ix)
\end{pmatrix},
\label{eq2}
\end{equation*}

and 

\begin{equation*}
\mathcal{W}_n z = 
\begin{pmatrix}
\sum_{i=1}^n b_i \sin(ix) \\
\sum_{i=1}^n (-a_i i - \gamma c_i)\sin(ix) \\
\sum_{i=1}^n (\gamma b_i - c_i i^2)\sin(ix)
\end{pmatrix}.
\label{eq3}
\end{equation*}

Evaluating \( \mathcal{W}_n z - \mathcal{W} z \), we have

\begin{equation*}
\begin{pmatrix}
\sum_{i=n+1}^\infty - b_i \sin(ix) \\
\sum_{i=n+1}^\infty -(-a_i i - \gamma c_i)\sin(ix) \\
\sum_{i=n+1}^\infty -(\gamma b_i - c_i i^2)\sin(ix)
\end{pmatrix} = R_n.
\label{eq4}
\end{equation*}

From the fact that \( \mathcal{W}z \in \mathcal{H} \), we have \( \| R_n \| \to 0 \) as \( n \to \infty \) and consequently, we get

\begin{equation*}
\lim_{n\to+\infty} \|\mathcal{W}_n z -  \mathcal{W}z \|_{\mathcal{H}} = 0, ~~ \text{ for all } z\in \mathcal{D}.
\label{eq5}
\end{equation*}
\end{proof}

\subsubsection{Approximation of $(S_w)$ under Neumann--Dirichlet (BCs)}

Under Neumann--Dirichlet \textbf{(BCs)},
\begin{equation*}
\partial_x u|_{x=0,\pi} = 0 \quad \text{and} \quad  \theta|_{x=0,\pi} = 0 \quad \text{for} \quad t > 0.
\end{equation*}
the basis vectors are given by
\[
\phi_{j}=\sqrt{\frac{2}{\pi}} \frac{1}{j} \cos j x, \quad \psi_{j}=\sqrt{\frac{2}{\pi}} \sin j x, \quad \xi_{j}=\sqrt{\frac{2}{\pi}} \sin j x, \quad j=1, \ldots, n.
\]
This leads to the following representation of the matrix $\mathcal{W}_{n}$
\[
\mathcal{W}_{n}=\left[\begin{array}{ccc}
0 & D_{n}^{T} & 0 \\
-D_{n} & 0 & -\gamma F_{n} \\
0 & \gamma F_{n}^{T} & -F^2_{n}
\end{array}\right].
\]
where \( D_{n} \) and \( F_{n} \) are defined as
$$
D_{n} = (D_{ij})_{1\leqslant i,j\leqslant n},\text{ with } D_{ij} = \begin{cases}-\frac{4}{\pi} \frac{i j}{i^{2}-j^{2}}, & |i-j| \text { is even } \\
0 & \text { otherwise }\end{cases}, 
$$
\[
F_n= \left[\begin{array}{lll}
1 & & \\
& \ddots & \\
& & n
\end{array}\right].
\]
Following the same proof as in Section \ref{2_sub_DN}, one can show the following result. 
\begin{prop}
\label{prop2}
\begin{itemize}
\item The semigroups generated by \( \mathcal{W}_{n} \) are uniformly polynomially stable with order $\alpha=2$.
\item The strong convergence of the approximate semigroups \( T_n(t) \) to \( T(t) = e^{t\mathcal{W}}, t>0, \) still holds.
\end{itemize}
\end{prop}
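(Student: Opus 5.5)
The plan is to reproduce, for the Neumann--Dirichlet matrix $\mathcal{W}_n$, the three-step scheme used in Section~\ref{2_sub_DN}: a uniform invertibility lemma, a uniform resolvent bound of order $|\beta|^2$ on the imaginary axis, and a Trotter--Kato consistency argument. Throughout, I use the block structure $\mathcal{W}_n=\begin{pmatrix}0&D_n^T&0\\-D_n&0&-\gamma F_n\\0&\gamma F_n^T&-F_n^2\end{pmatrix}$, in which $F_n=\mathrm{diag}(1,\dots,n)$.

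\textbf{Step 1 (analogue of Lemma~\ref{lemma_1}).} First I show that $i\mathbb{R}\subset\rho(\mathcal{W}_n)$. If $\mathcal{W}_n y=i\beta y$ with $\|y\|=1$, taking the real part of the inner product with $y$ gives $-\|F_n\theta\|^2=0$, so $\theta=0$. The third block row then gives $\gamma F_n^T v=0$, hence $v=0$. The second row gives $D_nu=0$.

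Here lies the first genuine difference with the Dirichlet--Neumann case. $D_n$ is no longer diagonal; it is the Gram matrix of $\partial_x\phi_i$ against $\partial_x\psi_j$, i.e.\ of $\sin$ against $\cos$. I need $D_n$ to be injective, and uniformly so, in the sense $\sup_n\|D_n^{-1}\|<\infty$. The natural route is to note that $D_n=P_n\,\partial_x|_{\text{span}}$ is a compression of a unitary map between $\overline{\mathrm{span}}\{\cos jx\}$ and $\overline{\mathrm{span}}\{\sin jx\}$ in $L^2$. I would try to bound $\|D_n x\|\ge c\|x\|$ through the explicit Hilbert-type kernel $ij/(i^2-j^2)$, or bypass inversion entirely. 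In the latter case I would use directly that $\|\mathcal{W}_n^{-1}\|$ is controlled by solving $\mathcal{W}_n y=f$ blockwise: $v=(D_n^T)^{-1}f_1$ is not needed if instead I solve for $\theta$ from the $F_n$ rows, which are diagonal and invertible with $\|F_n^{-1}\|\le1$. I would attempt the latter first, computing $\mathcal{W}_n^{-1}$ explicitly in terms of $F_n^{-1}$ and $D_n^{-1}$, and estimating $\|D_n^{-1}\|$ as the main technical input.

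\textbf{Step 2 (resolvent estimate).} I argue by contradiction as in the proof of Theorem~\ref{theorem_2}. Suppose there are $\beta_m\to\infty$ and unit vectors $y_m=(u_m,v_m,\theta_m)$ with $\beta_m^2(i\beta_m-\mathcal{W}_n)y_m\to0$. The dissipation identity gives $\|\beta_mF_n\theta_m\|\to0$, hence $\|\beta_m\theta_m\|\to0$, because $\|F_n^{-1}\|\le1$. Next I take the inner product of the third equation with $F_n^{-1}v_m$ (rather than with $v_m$), which isolates $\gamma\|v_m\|^2$ up to terms containing $\beta_m\theta_m$ and $F_n\theta_m$; this gives $\|v_m\|\to0$. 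Finally I combine the first two equations, paired with suitable test vectors, to obtain $\|u_m\|^2+\|v_m\|^2-\gamma(\theta_m,\cdot)\to0$. This step should be arranged so that only $D_n^T$ and $D_n$ appear in conjugate pairs; then $(i\beta u,u)$-type terms give $\|u_m\|\to0$ without requiring $D_n$ to be diagonal. The resulting contradiction, together with Theorems~\ref{thm4.4} and~\ref{thm3.6}, yields uniform polynomial stability of order $\alpha=2$.

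\textbf{Step 3 (convergence).} I apply Trotter--Kato exactly as in Theorem~\ref{theorem_3}. On the dense set of smooth $z$ expanded in the $\cos/\sin/\sin$ basis, $\mathcal{W}_nz-\mathcal{W}z$ is a tail of an $L^2$-convergent series, hence tends to $0$. Dissipativity of $\mathcal{W}_n$ and the density of the range of $I-\mathcal{W}$ give strong convergence, uniform on bounded $t$-intervals.

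\textbf{Main obstacle.} The hard point is the non-diagonal $D_n$: its uniform invertibility in Step 1, and avoiding $D_n^{-1}$ in the $u_m$ step of Step 2. I expect the first of these to need a careful estimate of the kernel $-\frac{4}{\pi}\frac{ij}{i^2-j^2}$.
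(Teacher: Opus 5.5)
There is a genuine gap, and it sits exactly where you put the main technical input: the uniform invertibility of $D_n$ is not a delicate kernel estimate still to be done. It is false.

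The entries $-\frac{4}{\pi}\frac{ij}{i^2-j^2}$ satisfy $D_{ji}=-D_{ij}$, and the diagonal vanishes because $\int_0^\pi \sin ix\cos ix\,dx=0$. So $D_n$ is real antisymmetric and $\det D_n=\det D_n^T=(-1)^n\det D_n$. Hence $D_n$ is singular for every odd $n$; already $D_1=0$.

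For $0\neq u\in\ker D_n$ the block form gives $\mathcal{W}_n(u,0,0)^T=(0,-D_nu,0)^T=0$. Consequently:
\begin{itemize}
\item $0\in\sigma(\mathcal{W}_n)$ and $\mathcal{W}_n^{-1}$ does not exist;
\item $T_n(t)(u,0,0)^T=(u,0,0)^T$ for all $t$, a non-decaying discrete solution;
\item the same identity gives $\|D_n^{-1}\|\le\|\mathcal{W}_n^{-1}\|$ whenever the latter exists, so you cannot bypass $D_n^{-1}$ by solving through the diagonal $F_n$ rows.
\end{itemize}
The map you call unitary is only an isometry: $\{\cos jx\}_{j\ge1}$ misses the constants, and the constant-strain direction reappears in the truncations.

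Nothing goes wrong on $i\mathbb{R}\setminus\{0\}$. For $\beta\neq0$ the first row gives $i\beta u=D_n^Tv=0$ directly. Your Step 2 is also sound: pairing the third equation with $F_n^{-1}v_m$, and the first with $u_m$ after substituting $D_nu_m$ from the second, never inverts $D_n$. So the bound for $|\beta|\ge1$ is uniform in $n$.

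However, Theorems \ref{thm4.4} and \ref{thm3.6}, and the very definition of polynomial stability, require $0\in\rho(\mathcal{W}_n)$ and $\sup_n\|\mathcal{W}_n^{-1}\|<\infty$. Both fail for every odd $n$. The first bullet therefore cannot be obtained by transplanting Lemma \ref{lemma_1}, and the paper's one-line appeal to Section \ref{2_sub_DN} has the same hole. A repair needs a new ingredient: either restrict to even $n$ and prove $\inf_{n\ \mathrm{even}}\sigma_{\min}(D_n)>0$, or choose a different basis in which the elastic block is invertible.

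Step 3 does not carry over either. Saying that $\mathcal{W}_nz-\mathcal{W}z$ is a tail of the series for $\mathcal{W}z$ uses that every Gram block is diagonal, as in the Dirichlet--Neumann case. Here the sine velocity modes are differentiated into cosines and re-expanded in sines. Let $P_n$ be the projection onto the first $n$ sines. The $i$-th entry of the first component of $\mathcal{W}_nP_nz-\mathcal{W}z$ is $-i\big((I-P_n)v,\sqrt{2/\pi}\cos ix\big)$. For smooth $z$, the $\ell^2$ norm over $i\le n$ tends to zero only if $v(0)=v(\pi)=0$; otherwise the sine coefficients of $v$ decay only like $1/k$ and this error does not go to zero.

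Under a Neumann condition on $u$ the velocity is free at the endpoints. Since the first component of $\mathcal{W}z$ is $\partial_xv$, the set $\{z\in\mathcal{D}(\mathcal{W}): v\in H^1_0\}$ is closed in the graph norm and proper. So it is not a core, and $(I-\mathcal{W})\mathcal{D}$ is not dense for a dense set $\mathcal{D}$ on which your consistency $\mathcal{W}_nP_nz\to\mathcal{W}z$ holds. The Trotter--Kato argument as you set it up therefore does not verify its hypotheses.
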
 
\subsubsection{Approximation of $(S_w)$ under Neumann--Neumann (BCs)}

Under Neumann--Neumann \textbf{(BCs)}, 

\begin{equation*}
\partial_x u|_{x=0,\pi} = 0 \quad \text{and} \quad  \partial_x\theta|_{x=0,\pi} = 0 \quad \text{for} \quad t > 0,
\end{equation*}
the basis vectors are given by
\[
\phi_{j}=\sqrt{\frac{2}{\pi}} \frac{1}{j} \cos j x, \quad \psi_{j}=\sqrt{\frac{2}{\pi}} \sin j x, \quad \xi_{j}=\sqrt{\frac{2}{\pi}} \cos j x, \quad j=1, \ldots, n.
\]
This leads to the following representation of the matrix $\mathcal{W}_{n}$
\[
\mathcal{W}_{n}=\left[\begin{array}{ccc}
0 & D_{n}^{T} & 0 \\
-D_{n} & 0 & -\gamma F_{n} \\
0 & \gamma F_{n}^{T} & -G_{n}
\end{array}\right],
\]
where \( D_{n} \), \( F_{n} \) and \(G_{n} \) are defined as
$$
D_{n} = (D_{ij})_{1\leqslant i,j\leqslant n},\text{ with } D_{ij} = \begin{cases}-\frac{4}{\pi} \frac{i j}{i^{2}-j^{2}}, & |i-j| \text { is even } \\
0 & \text { otherwise }\end{cases}, 
$$
$$ F_n= (F_{ij})_{1\leqslant i,j\leqslant n},\text{ with } F_{ij} = \begin{cases}-\frac{4}{\pi} \frac{i}{j^{2}-i^{2}}, & |i-j|=\text { even } \\
0 & \text { otherwise }\end{cases}, 
$$
\[
G_n = \frac{2}{\pi} \left[\begin{array}{lll}
1 & & \\
& \ddots & \\
& & n^2
\end{array}\right].
\]
Following the same line of reasoning as in Section~\ref{2_sub_DN}, one can show the following result. 
\begin{prop}
\label{prop3}
\begin{itemize}
\item The semigroups generated by \( \mathcal{W}_{n} \) are uniformly polynomially stable with order $\alpha=2$.
\item The strong convergence of the approximate semigroups \( T_n(t) \) to \( T(t) = e^{t\mathcal{W}}, t>0, \) still holds.
\end{itemize}
\end{prop}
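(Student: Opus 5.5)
The plan follows the template of Section~\ref{2_sub_DN}: establish uniform invertibility and absence of imaginary spectrum for the Neumann--Neumann matrices $\mathcal{W}_n$, prove the resolvent estimate with $|\beta|^2$ weight by contradiction, and then invoke Theorems~\ref{thm4.4} and \ref{thm3.6} to obtain order $\alpha=2$. Convergence is then handled by Trotter--Kato, as in Theorem~\ref{theorem_3}.

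\textbf{Step 1: the analogue of Lemma~\ref{lemma_1}.}
\begin{itemize}
\item Since $\mathcal{W}_n$ is finite dimensional, any $i\beta\in\sigma(\mathcal{W}_n)$ is an eigenvalue with eigenvector $y=(u,v,\theta)$, $\|y\|=1$.
\item Dissipativity gives $\Re(\mathcal{W}_n y,y)=-(G_n\theta,\theta)=0$. Because $G_n=\frac{2}{\pi}\mathrm{diag}(j^2)$ is positive definite, this forces $\theta=0$.
\item The third block row then gives $\gamma F_n^T v=0$. The first two rows reduce to $i\beta u=D_n^T v$ and $i\beta v=-D_n u$.
\item To conclude $u=v=0$, I need injectivity of $D_n$ and $F_n^T$. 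I would obtain this from their origin as Gram matrices of the $\cos jx$/$\sin jx$ systems and the invertibility of the continuous Neumann operators; this is the delicate point.
\item For $\sup_n\|\mathcal{W}_n^{-1}\|<\infty$, I would solve $\mathcal{W}_n y=f$ blockwise: $v=(D_n^T)^{-1}f_1$, then $\theta=G_n^{-1}(\gamma F_n^Tv-f_3)$, then $u$ from the second row. I would bound each operator uniformly in $n$ by comparing $D_n$, $F_n$ with the restrictions of the continuous operators $\partial_x$ and the identity between $\cos$ and $\sin$ bases, which are bounded with bounded inverse.
\end{itemize}

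\textbf{Step 2: the resolvent estimate
$\sup_{|\beta|\ge1,n}|\beta|^{-2}\|(i\beta-\mathcal{W}_n)^{-1}\|<\infty$ by contradiction.}
\begin{itemize}
\item Assume unit vectors $y_m$ with $\beta_m^2(i\beta_m-\mathcal{W}_n)y_m\to0$.
\item Taking the real part of the inner product with $y_m$ gives $\beta_m^2(G_n\theta_m,\theta_m)\to0$. Since $G_n\ge\frac{2}{\pi}I$, this yields $\|\beta_m\theta_m\|\to0$ and $\|\beta_m G_n^{1/2}\theta_m\|\to0$.
\item Take the inner product of the third equation, $\beta_m^2(i\beta_m\theta_m-\gamma F_n^Tv_m+G_n\theta_m)\to0$, with a suitable multiplier, say $\beta_m^{-2}F_n^{-T}$-preimage or simply $v_m$. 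This gives $\gamma(F_n^Tv_m,v_m)\to0$ up to terms controlled by $\|\beta_m\theta_m\|$ and $(G_n\theta_m,v_m)$.
\item The term $(G_n\theta_m,v_m)$ is rewritten via the first equation, $v_m\approx i\beta_m^{-1}\cdots$, in the manner of Theorem~\ref{theorem_2}.
\item Here I expect the main obstacle. In Section~\ref{2_sub_DN} the coupling block was diagonal ($\gamma D_n$), whereas now $F_n$ is a dense Hilbert-type matrix. One cannot read off $\|v_m\|\to0$ from $(F_n^Tv_m,v_m)\to0$ directly.
\item My first attempt would be to use the third equation to express $F_n^Tv_m=\gamma^{-1}(i\beta_m\theta_m+G_n\theta_m)+o(\beta_m^{-2})$, which is small in a weak norm. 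I would then pair with $F_n^{-1}$-type multipliers, or pass to the continuous functions $\sum v_j\sin jx$, where $F_n$ represents the $L^2$ pairing with cosines and is close to an isometry.
\item Once $\|v_m\|\to0$, combine the first two equations, tested against $u_m$ and $v_m$, to get $\|u_m\|^2+\|v_m\|^2-\gamma\Re(F_n\theta_m,u_m)\to0$. Hence $\|u_m\|\to0$, contradicting $\|y_m\|=1$.
\end{itemize}

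\textbf{Step 3: strong convergence.} I would apply Trotter--Kato exactly as in the proof of Theorem~\ref{theorem_3}. Take the core $\mathcal{D}=\mathcal{D}(\mathcal{W})\cap(H^4\times H^3\times H^4)$ with Neumann compatibility, expand in the cosine/sine bases, and observe that $\mathcal{W}_nz-\mathcal{W}z$ is a tail of a convergent series in $\mathcal{H}$, hence tends to $0$. Combined with uniform dissipativity of $\mathcal{W}_n$, this gives strong convergence uniformly on bounded $t$-intervals.
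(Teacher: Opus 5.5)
Your proposal follows the template the paper itself invokes (``the same line of reasoning as in Section~\ref{2_sub_DN}''), and you rightly see that this template relies on the diagonal structure of the Dirichlet--Neumann matrices. There is a genuine gap: the steps you leave open are not merely delicate, they fail for the Neumann--Neumann matrices. This is also where the paper's one-line justification breaks down.

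In Step~1, $D_n$ is not injective in general. Its off-diagonal entries $-\frac{4}{\pi}\frac{ij}{i^2-j^2}$ change sign under $i\leftrightarrow j$, whichever parity pattern is used. Its diagonal vanishes, since $\int_0^\pi\sin jx\cos jx\,dx=0$. Hence $D_n^T=-D_n$, and for every odd $n$ one has $\det D_n=\det(-D_n)=-\det D_n=0$. Taking $u\neq0$ with $D_nu=0$, the vector $y=(u,0,0)$ gives $\mathcal{W}_ny=(0,-D_nu,0)=0$. So $0\in\sigma(\mathcal{W}_n)$, the hypotheses $i\mathbb{R}\subset\rho(\mathcal{W}_n)$ and $\sup_n\|\mathcal{W}_n^{-1}\|<\infty$ of Theorems~\ref{thm4.4} and~\ref{thm3.6} are violated, and $T_n(t)\mathcal{W}_n^{-1}$ is not even defined. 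No comparison with continuous operators can repair an exact singularity.

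Step~2 has the matching defect. $F_n$, built from $(\cos ix,\sin jx)_{L^2}$, also has zero diagonal, so its symmetric part has zero trace and $(F_n^Tv,v)$ cannot bound $\|v\|^2$ from below. Nothing in your ``first attempt'' replaces the inequality $(D_nv_m,v_m)\ge\|v_m\|^2$ that drives the proof of Theorem~\ref{theorem_2}.

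The root cause is the choice of basis, and it also sinks Step~3. The velocity is expanded in $\sin jx$, although the Neumann condition on $u$ does not make $u_t$ vanish at $0,\pi$. Its sine coefficients therefore decay only like $1/j$, so the tail argument of Theorem~\ref{theorem_3} is unavailable. The temperature basis $\{\cos jx\}_{j\ge1}$ omits constants, so $\bigcup_n\mathcal{H}_n$ is not dense in $\mathcal{H}$.

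This is not a technicality. Under Neumann--Neumann conditions the states $u_x\equiv0$, $u_t\equiv w(t)$, $\theta\equiv g(t)$ solve $w'=-\gamma g$, $g'=\gamma w$ with no dissipation. Hence $\pm i\gamma\in\sigma_p(\mathcal{W})$, with eigenvectors $(0,1,\mp i)$. For $z_0=(0,0,1)$ one gets $T(t)z_0=(0,-\sin\gamma t,\cos\gamma t)$ and $\operatorname{dist}(T(t)z_0,\mathcal{H}_n)\ge\sqrt{\pi}\,|\cos\gamma t|$ for all $n$. So the second bullet is false on the energy space as defined, the first already fails for odd $n$, and no argument along your lines can close.

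A provable version restricts to the invariant subspace $\int_0^\pi u_t\,dx=\int_0^\pi\theta\,dx=0$, expanding $u_x$ in $\sin jx$ and both $u_t$ and $\theta$ in $\cos jx$. Then $\mathcal{W}_n$ splits into the blocks $\begin{pmatrix}0&-j&0\\ j&0&-\gamma\\ 0&\gamma&-j^2\end{pmatrix}$. These are the Dirichlet--Dirichlet blocks up to the sign of the first component, so the argument behind Theorem~\ref{thm_4.10} applies verbatim.
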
 

\section{Numerical Studies}\label{section5}
In this section, we present numerical experiments that illustrate the asymptotic behavior of the strongly and weakly coupled thermoelastic systems discussed in the previous sections \ref{section3} and \ref{section4}. Our primary objective is to validate the theoretical results on exponential and polynomial decay by examining the evolution of the system energy under various boundary conditions and initial data profiles. In particular, we emphasize the influence of initial data regularity and the nature of coupling on the long-time behavior of solutions. The numerical simulations also serve to highlight the spectral characteristics observed in the modal approximation schemes and confirm the spectral asymptotics derived analytically in Section \ref{section4}. All computations are performed using modal approximation schemes to ensure that the spectral features are accurately captured.

\subsection{Spectral Behavior of Thermoelasticity}

Figures \ref{fig1} and \ref{fig2} illustrate, respectively, the impact of the coupling terms on the placement of eigenvalues of the dynamic matrix $\mathcal{S}_{n}$ and $\mathcal{W}_{n}$ in the case of the Dirichlet-Dirichlet \textbf{BCs}.

In Figure \ref{fig1}, the distance between the eigenvalues and the imaginary axis remains uniform, as shown in Table \ref{dis_spec}. This is in perfect agreement with Theorem \ref{thm3}. It is also observed that for a fixed $n$, the eigenvalues of higher frequency modes, particularly the $n^{th}$ mode, are closer to the imaginary axis. Additionally, as the number of modes increases, these eigenvalues revert to the vertical line $\Re\lambda=-\frac{\gamma^2}{2}$, a phenomenon previously demonstrated by \citet{Hansen1992,guo1997asymptotic}. Consequently, the corresponding spectral element approximation scheme preserves the property of exponential stability.

\begin{table}[h]
\centering
\caption{Distance between $\sigma(\mathcal{S}_{n})$ and the imaginary axis for the spectral element method}
\begin{tabular}{|c|c|} 
\hline
$n$ & $\min\{-\text{Re}(\lambda), \lambda\in\sigma(\mathcal{S}_{n})\}$\\
\hline 
8 & $8.9227\times 10^{-4}$\\
\hline
16 & $8.9383\times 10^{-4}$\\
\hline
24 & $8.9402\times 10^{-4}$\\
\hline
32 & $8.9407\times 10^{-4}$\\ 
\hline
\end{tabular}
\label{dis_spec}
\end{table}

\begin{figure}[h]
\centering
\includegraphics[scale=0.6]{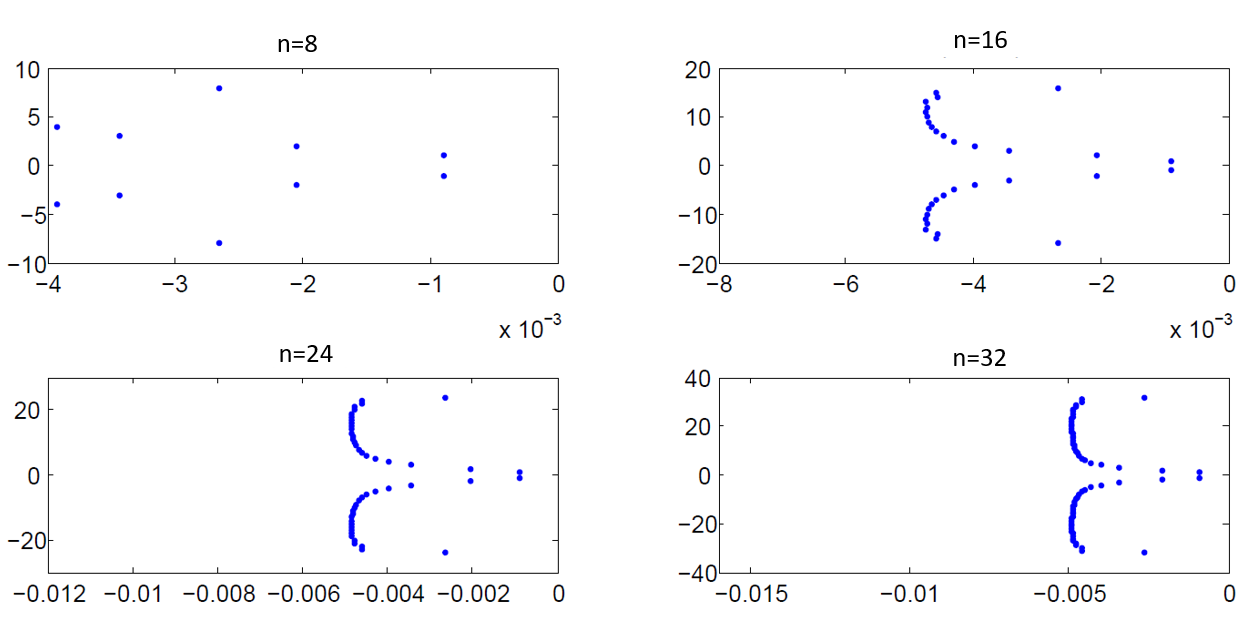}
\caption{Eigenvalues of $\mathcal{S}_{n}$.}
\label{fig1}
\end{figure}

\begin{figure}[H]
\centering
\includegraphics[scale=0.5]{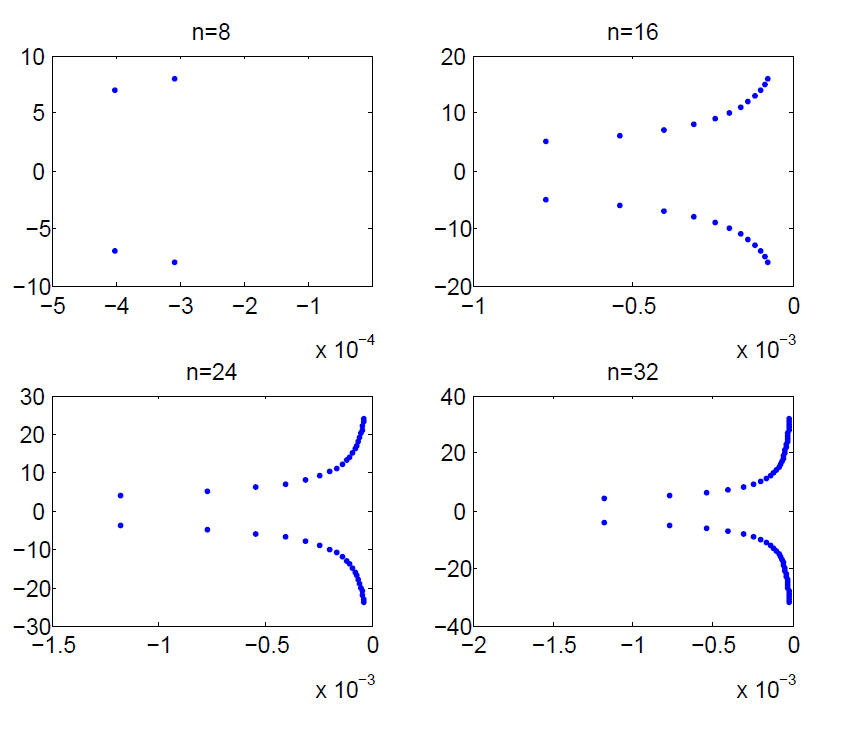}
\caption{Eigenvalues of $\mathcal{W}_{n}$.}
\label{fig2}
\end{figure}

In contrast to Figure \ref{fig1}, where the eigenvalues maintain a uniform distance from the imaginary axis, Figure \ref{fig2} reveals that as the number of modes increases, an asymptotic behavior emerges near the imaginary axis at $\pm\infty$. This property is a hallmark of systems with polynomial decay, as discussed in \cite{batkai2006polynomial}.

\subsection{Energy Behavior of Thermoelasticity}

The discrete energy associated to systems $(S_*)$ is given by

\begin{equation}
\label{eq4b}
E_{*,n}(t)=\frac{h}{2}\sum_{j=1}^n\Bigg\{\Bigg|\frac{u_{j+1}(t)-u_{j}(t)}{h}\Bigg|^{2}+|v_j(t)|^{2}+|\theta_j(t)|^{2}\Bigg\},\;*\in\{s,w\}.
\end{equation}

For system $(S_s)$, the discrete energy $E_{s,n}$ decays exponentially to zero (see Figure \ref{fig3}). Specifically, there exist positive constants $M$ and $\alpha$ such that
\[
E_{s,n}(t)\leqslant Me^{-\alpha t}E_{s,n}(0),\;n\in\mathbb{N},\;t>0.
\]
However, the introduction of the weak coupling term in system $(S_w)$ alters the dynamics and consequently the behavior of the energy \eqref{eq4b}. In this case, the system $(S_w)$ exhibits polynomial decay to zero (see Figure \ref{fig3}). Specifically, there exist positive constants $M$ such that
\[
E_{w,n}(t)\leqslant \frac{M}{t}\|\mathcal{W}_{n}z_{n0}\|^2,\;n\in\mathbb{N},\;t>0.
\]

\begin{figure}[H]
\centering
\includegraphics[scale=0.45]{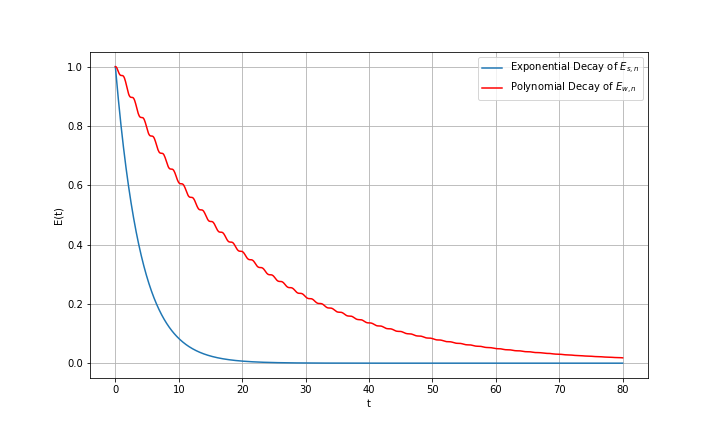}
\caption{Uniform decay of the energy.}
\label{fig3}
\end{figure}

\subsection{Effect of smoothness of the initial data on the rate of decay of energy} 

For the following numerical simulations, we use a uniform mesh with $n = 100$ elements, fix the final time at $T = 100$, set $dt = 0.1$, and consider the following initial conditions for $u$, $u_t$ and $\theta$

\[
u(x,0) = 0,\quad \theta(x, 0) = 0,\quad u_t(x, 0) =sin(jx),\; j = 1, 2, 3.
\]

\begin{figure}[H]
\centering
\includegraphics[scale=0.8]{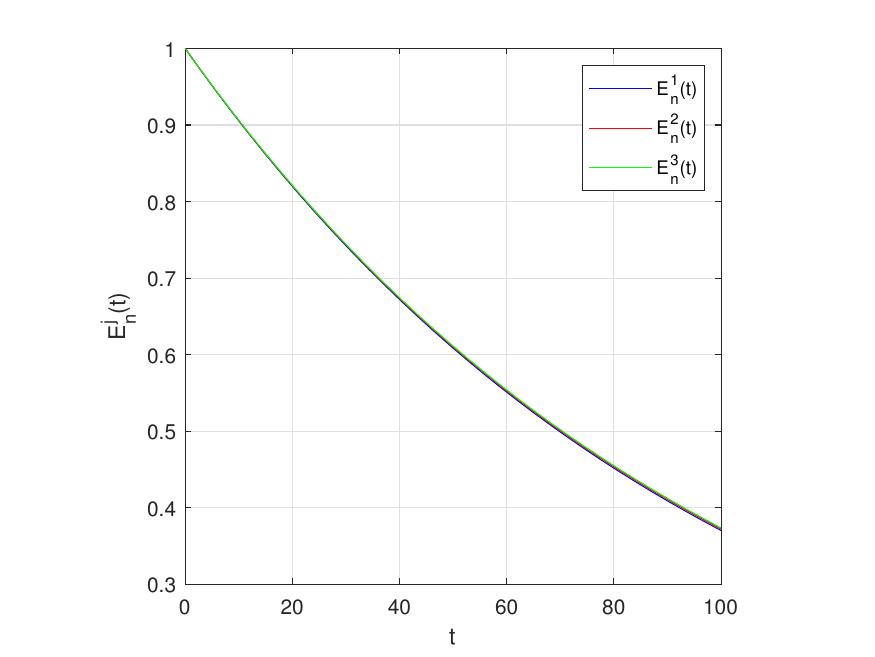}
\caption{No effect of initial data smoothness on the exponential decay of $E_{s,n}(t)$.}
\label{fig4}
\end{figure}

The behavior of the energy associated with system $(S_s)$ remains unaffected by the smoothness of the initial data as $n\to\infty$, as shown in Figure \ref{fig4}.

In contrast, it has been shown (see \cite{batkai2006polynomial, borichev2010optimal}) that the energy associated with system $(S_w)$ is highly sensitive to the smoothness of its initial data. This phenomenon is also witnessed in our numerical observations, as illustrated in Figure \ref{fig5}.

\begin{figure}[H]
\centering
\includegraphics[scale=0.8]{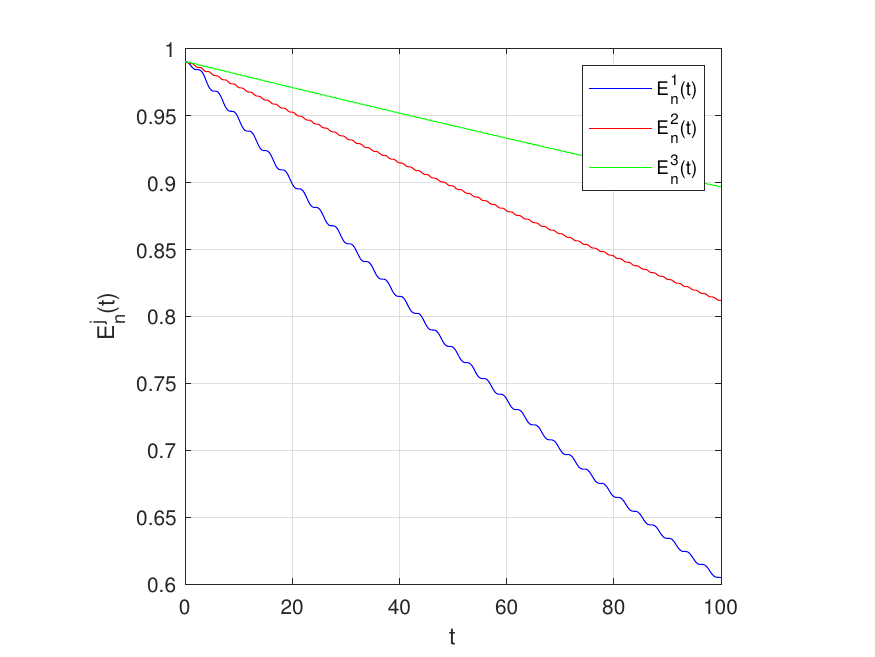}
\caption{Effect of initial data smoothness on the polynomial decay of $E_{w,n}(t)$.}
\label{fig5}
\end{figure}

In Figure \ref{fig5}, we see that for $j = 1$, the approximate energy $E_{w,n}(t)$ decays to zero as time $t$ increases. However, the decay rate is strongly influenced by $j$. Specifically, as $j$ increases, and thus the initial data become more oscillatory. This indicates that the rate of decay of the discrete energy $E_{w,n}(t)$ is highly sensitive to the choice of initial data.

\subsection{Effect of discontinuity of the initial data on the rate of decay of energy}

For this numerical simulations, we consider the following initial conditions for $u$, $u_t$ and $\theta$

\[
u(x,0) = 0,\quad \theta(x, 0) = 0,\quad u_t(x, 0) =2 \mathbf{1}_{[0, \pi]}(x) - 3 \mathbf{1}_{[\pi/2, \pi]}(x)
\]

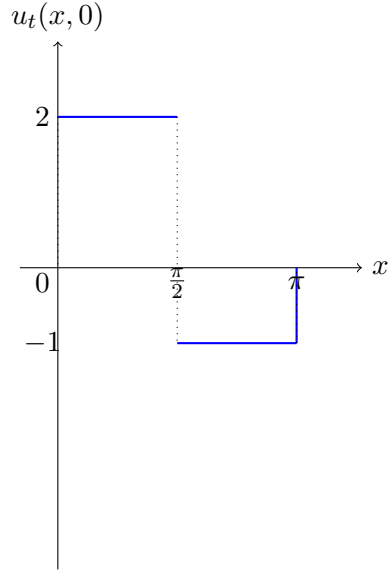
\begin{figure}[H]
\centering
\begin{tikzpicture}
    \draw[->] (-0.5, 0) -- (4, 0) node[right] {$x$};
    \draw[->] (0, -4) -- (0, 3) node[above] {$u_t(x, 0)$};

    \draw[thick, blue] (0, 2) -- (1.5708, 2);
    \draw[thick, blue] (1.5708, -1) -- (3.14159, -1);
    \draw[thick, blue] (3.14159, 0) -- (3.14159, -1);

    \node at (3.14159, -0.2) {$\pi$};
    \node at (1.5708, -0.2) {$\frac{\pi}{2}$};
    \node at (-0.2, 2) {$2$};
    \node at (-0.2, -1) {$-1$};
    \node at (-0.2, -0.2) {$0$};

    \draw[dotted] (0, 2) -- (0, 0);
    \draw[dotted] (1.5708, 2) -- (1.5708, -1);
    \draw[dotted] (3.14159, -1) -- (3.14159, 0);

\end{tikzpicture}
\caption{Plot of the discontinuous velocity $u_t(x,0)$.}
\label{fig6}
\end{figure}

From Figure \ref{fig7}, we observe that the energy associated with system $(S_s)$ remains unaffected by discontinuities in the initial data. However, the energy associated with system $(S_w)$, as shown in Figure \ref{fig7}, is sensitive to such discontinuities. This sensitivity indicates that the rate of decay of the discrete energy $E_{w,n}(t)$ in system $(S_w)$ is dependent on the choice of initial data.
 
\begin{figure}[H]
\centering
\includegraphics[scale=0.45]{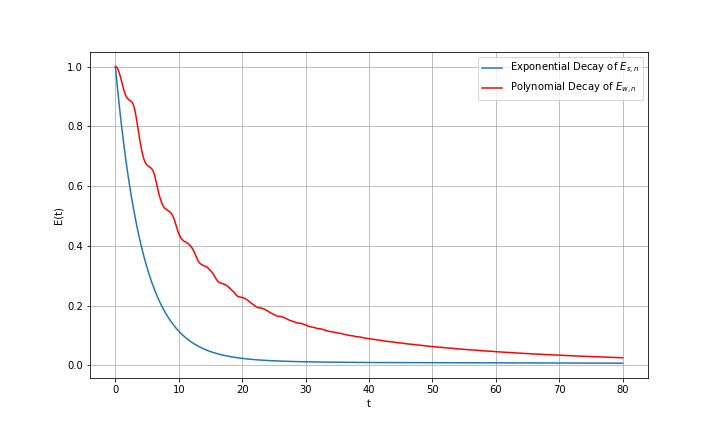}
\caption{Effect of initial data discontinuities on the polynomial decay of energy.}
\label{fig7}
\end{figure}



\section{Conclusion and Perspectives}
\label{section6}
In this study, we review and extend existing results on exponential and polynomial stability of coupled thermoelastic systems. Specifically, we establish that uniform decay can be achieved by employing a modal approximation framework combined with uniform resolvent estimates; see Theorems \ref{thm3} and \ref{thm4.4}. A key contribution of this work is the proof of uniform stability for coupled thermoelastic systems under a broad class of physically relevant boundary conditions.

The main contributions of the paper are as follows.
\begin{itemize}
\item We generalize exponential stability results from strongly to weakly coupled thermoelastic systems.
\item We derive uniform polynomial decay rates using resolvent-based spectral analysis.
\item We consider a wide class of boundary conditions \textbf{(BCs)} relevant to engineering and physical models.
\item We demonstrate that modal approximations preserve the qualitative stability properties of the continuous system, ensuring accurate and robust numerical behavior.
\end{itemize}
Although the proposed framework is effective for weakly coupled thermoelastic systems, several avenues remain open for future research. A primary direction is to extend the analysis to non-linear weakly coupled systems, particularly those with temperature-dependent or displacement-dependent coefficients. Further investigation is also warranted in the context of complex geometries or higher-dimensional domains, where boundary regularity and coupling effects pose significant analytical challenges. Additionally, the framework can be enriched to incorporate dynamic, non-local,, or feedback-type boundary conditions, which are common in control theory and smart material systems. Another promising direction involves the inclusion of additional physical effects, such as memory terms, electromagnetic interactions, viscoelastic behavior, or fluid-structure coupling, to develop a multiphysics generalization of the current model.

\section*{Acknowledgment}

This work is dedicated to the E-Tacsi Team, in recognition
of their continuous support and valuable contributions to the study of thermoelasticity.
The authors gratefully acknowledge the support of the ´Ecole Hassania
des Travaux Publics, in particular the MoNum Team, for providing a stimulating
research environment. They also thank Mrs. Ghabbar Yamna, Mr. Najib Cherfaoui
and Mr. Noureddine Semane for their valuable assistance, encouragement,
and sustained support throughout this work. Finally, the authors express their
sincere gratitude to the anonymous referees for their careful reviews and constructive comments, which have significantly improved the quality and clarity of this manuscript.

\section*{Statements and Declarations}

The authors of this paper declare that they have no known competing financial interests or personal relationships that could have appeared to influence the work reported in this paper. 

\bibliographystyle{plainnat}
\bibliography{biblio}
\end{document}